\newcommand{\ds}{\displaystyle}
\newcommand{\e}{{\rm e}}
\newcommand{\dd}{{\rm d}}
\newcommand{\OPS}{orthogonal polynomial sequence}
\newcommand{\Lf}{\mathcal{L}}
\newcommand{\pfq}[3]{{}_{2}F_{1} \left( {#1 \atop #2 }; #3 \right) }
\newcommand{\pFq}[5]{{}_{#1}F_{#2} \left( {#3 \atop #4 }; #5 \right) }
\begin{document}

\markboth{A. F. Loureiro and W. Van Assche}{Threefold symmetric Hahn-classical multiple orthogonal polynomials }

%
\catchline{}{}{}{}{}
%

\title{THREEFOLD SYMMETRIC HAHN-CLASSICAL MULTIPLE ORTHOGONAL POLYNOMIALS }

\author{ANA F. LOUREIRO
}

\address{School of Mathematics, Statistics and Actuarial Science (SMSAS), Sibson Building, 
University of Kent, Canterbury, Kent CT2 7FS (U.K.)\\
a.loureiro@kent.ac.uk (corresponding author)}

\author{WALTER VAN ASSCHE}

\address{Department of Mathematics, KU Leuven, Celestijnenlaan 200B box 2400, BE-3001 Leuven, Belgium\\
walter.vanassche@kuleuven.be}

\maketitle
\date{}


\begin{abstract}
We characterize all the multiple orthogonal threefold symmetric polynomial sequences whose sequence of derivatives is also multiple orthogonal. Such a property is commonly called the Hahn property and it is an extension of the concept of classical polynomials to the context of multiple orthogonality. The emphasis is on the polynomials whose indices lie on the step line, also known as $2$-orthogonal polynomials. We explain the relation of the asymptotic behavior of the recurrence coefficients to that of the largest zero (in absolute value) of the polynomial set. We provide a full characterization of the Hahn-classical orthogonality measures supported on a $3$-star in the complex plane containing all the zeros of the polynomials. There are essentially three distinct families, one of them $2$-orthogonal with respect to two confluent functions of the second kind. 
This paper complements earlier research of Douak and Maroni.  
\end{abstract}

\keywords{orthogonal polynomials, multiple orthogonal polynomials, confluent hypergeometric function, Airy function, Hahn classical polynomials, recurrence relation, linear differential equation}

\ccode{Mathematics Subject Classification 2000: 33C45; 42C05}

\section{Introduction and motivation}\label{sec:intro}

In this paper we investigate and characterize all the multiple orthogonal polynomials of type II that are threefold symmetric and are such that the polynomial sequence of its derivatives is also a multiple orthogonal sequence of type II. 
Within the standard orthogonality context there are only four families satisfying this property, commonly referred to as the {\it Hahn property}, and they are the Hermite, Laguerre, Jacobi and Bessel,  collectively known as the {\it classical orthogonal polynomials}. These four families of polynomials also share a number of analytic and algebraic properties. Several studies are dedicated to extensions of those properties to the context of multiple orthogonality. However, those extensions give rise to completely different sequences of multiple orthogonal polynomials. For the usual orthogonal polynomials the Hahn property is equivalent with the Bochner characterization (polynomials satisfy a second order differential equation of Sturm-Liouville type) and the existence of a Rodrigues formula. This is no longer true for multiple orthogonal polynomials since there
are families of multiple orthogonal polynomials with a Rodrigues type formula \cite{ABV} that do not satisfy the Hahn property, and various examples of multiple orthogonal polynomials satisfy a higher-order linear recurrence
relation, which is not of Sturm-Liouville type \cite{CouWVA}. Hence characterizations for multiple orthogonal polynomials based on the Hahn property, the Bochner property or the Rodrigues formula give different families of
polynomials. 

A sequence of monic polynomials $\{ P_n\}_{n\geqslant 0}$ with $\deg P_n =n$ is orthogonal with respect to a Borel measure $\mu$ whenever 
$$
	\int P_n(x) x^k \dd\mu(x) = 0 \ if\ k=0,1,\ldots ,n-1, \ n\geq 0
$$
and 
$$
	\int P_n(x) x^n \dd \mu(x) \neq 0 \ for \ n\geqslant 0. 	
$$
Without loss of generality, often we normalize $\mu$ so that it is a probability measure. Obviously, an \OPS\ forms a basis of the vector space of polynomials $\mathcal{P}$. The measures described above can be represented via a linear functional $\Lf$, defined on $\mathcal{P}^{\prime}$, the dual space of $\mathcal{P}$, and it is understood that the action of $\Lf$ over a polynomial $f$ corresponds to $\int f(x) \dd \mu(x)$. Throughout, we denote this action as 
$$
	\langle\Lf,f(x)\rangle  \coloneqq \int f(x) \dd \mu(x). 
$$
The derivative of a function $f$ is denoted by $f'$ or $\frac{\dd }{\dd x}f(x)$. 
Properties on $\mathcal{P}^{\prime}$, such as differentiation or multiplication by a polynomial, can be defined by duality. A detailed explanation can be found in \cite{MarAlg} \cite{MarVar}. In particular, given $g\in\mathcal{P}$ and a functional $\Lf\in\mathcal{P}^\prime$, we define  
\begin{equation} \label{properties functionals}
	\langle g(x)\Lf , f(x) \rangle \coloneqq  \langle \Lf , g(x)f(x) \rangle
	\quad \text{and}\quad 
	\langle \Lf ', f(x) \rangle \coloneqq - \langle \Lf , f' (x)\rangle
\end{equation}
for any polynomial $f$. 

From the definition it is straightforward that an \OPS\ satisfies a second order recurrence relation 
\begin{equation}\label{rec rel OPS}
	P_{n+1}(x) = (x-\beta_n) P_n(x) - \gamma_{n} P_{n-1}(x), \ n\geqslant 1, 
\end{equation}
with $P_0(x) =1$, $P_1(x)=x-\beta_0$ and $\gamma_{n}\neq 0$ for all integers $n\geqslant 1$. This relation is often called the three-term recurrence relation, but we will avoid this terminology as we will be dealing with three-term recurrence relations which are of higher order. There is an important converse of this connection between orthogonal polynomials and second order recurrence relations, known as the Shohat-Favard theorem or spectral theorem for orthogonal polynomials. It states that any sequence of monic polynomials $\{ P_n\}_{n\geqslant 0}$ with $\deg P_n =n$, satisfying the recurrence relation \eqref{rec rel OPS} with $\gamma_n\neq 0$ for all $n\geqslant1$ and initial conditions $P_{-1}(x) =0$ and $P_0(x)=1$, is always an \OPS\ with respect to some measure $\mu$ and, if, in addition, $\beta_n\in\mathbb{R}$ and $\gamma_{n+1}>0$ for all $n\geqslant0$, then $\mu$ is a positive measure on the real line. So, basically, the orthogonality conditions and the second order recurrence relations are two equivalent ways to characterize an \OPS. 

Multiple orthogonal polynomials of type II correspond to a sequence of polynomials of a single variable that satisfy multiple orthogonality conditions with respect to $r>1$ measures. With the multi-index $\vec{n}=(n_1,\ldots,n_r) \in \mathbb{N}^r$, type II multiple orthogonal polynomials correspond to a (multi-index) sequence of monic polynomials $P_{\vec{n}}(x)$ of degree $|\vec{n}| = n_1+\ldots+n_r $ for which there is a vector of measures 
$(\mu_0, \ldots, \mu_{r-1})$ such that   
\begin{equation}\label{multi OPS}
	\int x^k P_{\vec{n}}(x) \dd \mu_j(x) = 0 , \quad 0\leqslant k \leqslant n_j-1
\end{equation}
holds for every $j=0,1,\ldots, r-1$.  By setting $r=1$, we recover the usual orthogonal polynomials. Obviously, \eqref{multi OPS} amounts to the same as saying there exists a vector of $r$ linear functionals $(\Lf_0,\ldots, \Lf_{r-1})$ such that 
$$ 
	\langle \Lf_j, x^k P_{\vec{n}}(x) \rangle = 0 , \quad 0\leqslant k \leqslant n_j-1. 
$$ 
This polynomial $P_{\vec{n}}$ set may not exist or is not unique unless we impose some extra conditions on the $r$ measures $\mu_0,\ldots,\mu_{r-1}$, but
under appropriate conditions the polynomial set satisfies a system of $r$ recurrence relations, relating $P_{\vec{n}}$ with its nearest neighbors 
$P_{\vec{n}+\vec{e}_k}$ and $P_{\vec{n}-\vec{e}_j}$, where $\vec{e}_k$ consists of the $r$-dimensional unit vector with zero entries except for the $k$th component which is $1$ (see \cite{VAneigh11} for further details). The focus of the present work is  on the polynomials whose multi-indices lie on the step-line near the diagonal $\vec{n}=(n,n,\ldots,n)$, and are defined by  
$$ 
	P_{rn+j}(x) = P_{\vec{n}+\vec{s}_j}(x) , \quad \text{with }\quad \vec{s}_j= \sum_{\ell=0}^j \vec{e}_\ell, \qquad j=0,1,\ldots, r-1,
$$ 
that is, 
$$ 
	P_{rn+j}(x) = P_{(\scriptstyle\underbrace{\scriptstyle{ n+1,\ldots,n+1}}_{j},\underbrace{\scriptstyle n,\ldots, n}_{r-j})}(x)
	, \qquad j=0,1,\ldots, r-1. 
$$ 
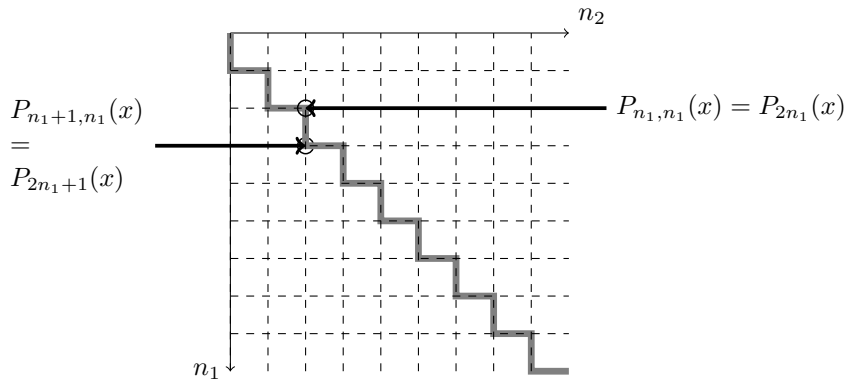
\begin{figure}[ht]
\centering
\begin{tikzpicture}[domain=0:2]
\draw[->] (0,4.5) -- (4.5,4.5)  node[above right] {$n_2$};
\draw[->] (0,4.5)  --  (0,0)  node[left] {$n_1$};
\draw[line width=0.2mm] (1,3) circle (0.7ex) ;
\draw[line width=0.9mm, color=gray] (0,4.5) -- (0,4) -- (0.5,4) -- (0.5, 3.5) -- (1,3.5) -- (1,3) -- (1.5, 3) -- (1.5,2.5) -- (2,2.5) -- (2,2) -- (2.5 , 2) -- (2.5,1.5) -- (3,1.5) -- (3,1) -- (3.5, 1) -- (3.5 , 0.5) -- (4, 0.5) -- (4 , 0)-- (4.5, 0);
\draw[->, line width=0.5mm] (-1,3) -- (1,3) ;
\draw[line width=0.2mm] (-1,3)  node[left]{\parbox{1.8cm}{$P_{n_1+1,n_1}(x)\\=P_{2n_1+1}(x)$}};
\draw[->, line width=0.5mm] (5,3.5) -- (1,3.5) ;
\draw[line width=0.2mm] (5,3.5)  node[right]{$P_{n_1,n_1}(x)=P_{2n_1}(x)$};
\draw[line width=0.2mm] (1,3.5) circle (0.7ex) ;
\draw[-, color=black, dashed] (4,0) -- (4,4.5);
\draw[-, color=black, dashed] (3.5,0) -- (3.5,4.5);
\draw[-, color=black, dashed] (3,0) -- (3,4.5);
\draw[-, color=black, dashed] (2.5,0) -- (2.5,4.5);
\draw[-, color=black, dashed] (2,0) -- (2,4.5);
\draw[-, color=black, dashed] (1.5,0) -- (1.5,4.5);
\draw[-, color=black, dashed] (1,0) -- (1,4.5);
\draw[-, color=black, dashed] (0.5,0) -- (0.5,4.5);
\draw[-, color=black, dashed] (0,0) -- (0,4.5);

\draw[-, color=black, dashed] (0,4) -- (4.5,4);
\draw[-, color=black, dashed] (0,3.5) -- (4.5,3.5);
\draw[-, color=black, dashed] (0,3) -- (4.5,3);
\draw[-, color=black, dashed] (0,2.5) -- (4.5,2.5);
\draw[-, color=black, dashed] (0,2) -- (4.5,2);
\draw[-, color=black, dashed] (0,1.5) -- (4.5,1.5);
\draw[-, color=black, dashed] (0,1) -- (4.5,1);
\draw[-, color=black, dashed] (0,0.5) -- (4.5,0.5);
\end{tikzpicture}
\caption{Step-line for the multi-index  $(n_1,n_2)$  when $r=2$.}
\label{fig:step-line}
\end{figure}
This {\it step-line} polynomial sequence $\{{P}_n\}_{n\geqslant0}$ satisfies a $(r+1)$th order recurrence relation (involving up to $(r+2)$ consecutive terms) and it can be written as 
\begin{equation}\label{recrel dOPS}
\begin{array}{l}
	\ds xP_{n}(x) = P_{n+1}(x) + \sum_{j=0}^{r} \zeta_{n,j} P_{n-j}(x) , \qquad n\geqslant r, \\
	\ds P_j(x)=x^j, \qquad  j=0,\ldots, r ,
\end{array}
\end{equation}
where $\zeta_{n,r}\neq 0$ for all $n\geqslant r$. Such a step-line polynomial sequence $\{{P}_n\}_{n\geqslant0}$ corresponds to so-called {\it $d$-orthogonal polynomials} (with $d=r$), see \cite{Mar89,DM95,Mar99} and \linebreak Figure \ref{fig:step-line} for the case of $r=2$.  So, basically, if there exists a vector of linear functionals $(\Lf_0,\ldots, \Lf_{d-1})$ and a polynomial sequence $\{{P}_n\}_{n\geqslant0}$ satisfying 
$$
	\langle\Lf_j, x^k P_n(x) \rangle = 0  \quad \text{and} \quad \langle\Lf_j, x^{n} P_{dn+j}(x) \rangle \neq 0  , 
	\quad 0 \leqslant k \leqslant \left\lfloor \frac{n-j}{d} \right\rfloor, \ n\geqslant 0,
$$
then the polynomials $\{{P}_n\}_{n\geqslant0}$ are related by \eqref{recrel dOPS}. There is a converse result, which is a natural generalization of the Shohat-Favard theorem, in the sense that if a polynomial sequence $\{{P}_n\}_{n\geqslant0}$ satisfies \eqref{recrel dOPS} with $\zeta_{n,d}\neq 0$ for all $n\geqslant d$, then there is a vector of $r$ linear functionals $(\Lf_0, \ldots, \Lf_{d-1})$ with respect to which $\{{P}_n\}_{n\geqslant0}$ is $d$-orthogonal. Such a vector of linear functionals is not unique, but we can consider its components to be the first $d$ elements of the dual sequence $\{{u}_n\}_{n\geqslant0}$ associated to $\{{P}_n\}_{n\geqslant0}$, which always exist and which are defined by 
$$
	\langle u_n,P_m \rangle \coloneqq \delta_{n,m}	, \quad m,n\geqslant 0, 
$$
where $\delta_{n,m}$ represents the {\it Kronecker symbol}. The remaining elements of the dual sequence of a $d$-\OPS\ can be generated from the first ones: for each pair of integers $(n,j)$ there exist polynomials $q_{n,j,\nu}(x)$ such that \cite{Mar89} 
$$
	u_{dn+j} \ = \ \sum_{\nu=0}^{d-1} q_{n,j,\nu}(x) u_\nu, \quad \text{for} \quad 0\leqslant j\leqslant d-1, 
$$
where $\deg q_{n,j,j}(x) =n$, $\deg q_{n,j,\nu}(x) \leqslant n $ for $0\leqslant \nu \leqslant j-1$ if $1\leqslant j \leqslant d-1$ and $\deg q_{n,j,\nu}(x) \leqslant n -1 $ for $j+1\leqslant \nu\leqslant d-1$ if $0\leqslant j\leqslant d-2$. Unlike the standard orthogonality (case $d=1$), this result only provides structural properties for the dual sequence and in practical terms it can be used in that sense. 

In this paper we mainly deal with $2$-\OPS s, but our results and ideas can be extended to the $d$-orthogonal case. Here, we provide a characterization of all the $2$-\OPS s  that are {\it threefold symmetric} and possess the {\it Hahn property} in the context of $2$-orthogonality, i.e., the sequence of monic derivatives is also $2$-orthogonal. The notion and properties of {\it threefold symmetric} $2$-\OPS s are revised and discussed in Section \ref{sec:2}, while we also bring a new result (Theorem \ref{thm: asym zero}) relating the asymptotic behavior of the recurrence coefficient to the asymptotic behavior of the absolute value of the largest zero. Section \ref{sec:Hahn 2OPS} is dedicated to a detailed characterization of all the threefold symmetric $2$-orthogonal polynomials satisfying the Hahn property. We bring together old and new results in a self-contained and complete analysis to fully characterize this type of sequences in terms of their explicit recurrence relations, weight functions and a differential equation of third order. In that regard, we complete the study initiated by Douak and Maroni in \cite{DM92, DM95} by providing sufficient conditions and taking the study based only on the properties of the zeros in combination with the weights. 
We end up this coherent description with a detailed analysis of the four distinct families of threefold symmetric $2$-orthogonal polynomials satisfying the Hahn property. The first case, treated in Section \ref{subs: caseA}  corresponds to polynomials with the Appell property whose $2$-orthogonality weights are supported on the three-starlike set (as in Fig.\ref{fig: 3 rays}) represented via the Airy function and its derivative. These polynomials have been studied in 
\cite{Douak96}, but the explicit representation of the weights supported on a set containing all the zeros has not been considered there. In Sections \ref{subs: caseB1} and \ref{subs: caseB2} we study the second and third cases where the weights are represented via the confluent hypergeometric Kummer function of 2nd kind. These are, to the best of our knowledge, new. The fourth case is treated in Section \ref{subs: caseC}, where the orthogonality weights are expressed via hypergeometric functions and depend on two parameters. For special choices of those parameters we recover some known polynomial families. However, the existent literature on the subject has not taken into account the explicit representation of the weights supported on a set containing all the zeros and this has been accomplished here for all the threefold symmetric polynomials satisfying the {\it Hahn property} within the context of $2$-orthogonality. 
The orthogonality \eqref{multi OPS} of these polynomials corresponds to an non-hermitian inner product (no conjugation is used) on three-starlike sets. The Airy function, the confluent hypergeometric function and the hypergeometric function are analytic functions, so the paths of integration can
be deformed away from the three-starlike sets without loosing orthogonality. However, the choice of the three-starlike sets is convenient because
of the positivity of the measures and the location of the zeros.
It turns out that the orthogonality measures under analysis are solutions to a second order differential equation and, as such, the recurrence coefficients for the whole set of the corresponding multiple orthogonal polynomials of type II can only be obtained algorithmically via the nearest-neighbor algorithm explained in \cite{VAneigh11} \cite{FHVA}. 
Some of the $d$-orthogonal polynomials that we found appear in the theory of random matrices, in particular in the investigation of singular values
of products of Ginibre matrices, which uses multiple orthogonal polynomials with weight functions expressed in terms of Meijer G-functions
\cite{KuijlSti}. For $r=2$ these
Meijer G-functions are hypergeometric or confluent hypergeometric functions.

\section{Threefold 2-\OPS}\label{sec:2}

A sequence of monic polynomials $\{P_n\}_{n\geqslant 0}$ (with $\deg{P_n}=n$) is {\it symmetric} whenever $P_n(-x) = (-1)^n P_n(x)$ for all $n\geqslant0$. This means that all even degree polynomials are even functions while odd degree polynomials are odd functions. Hermite and Gegenbauer polynomials are examples of symmetric polynomial sets, which also happen to be the only classical orthogonal polynomials that are symmetric. Many other examples of symmetric orthogonal polynomial sequences are around in the literature. 

The notion of symmetry of polynomial sequences has been extended and commonly referred to as $d$-symmetric in works by Maroni \cite{Mar89,DM95}  and followed by Ben Cheikh and his collaborators \cite {BCBR11,BRMG16,BR08,LOHerm}. The case $d=1$ would correspond to the usual symmetric case. We believe the name is misleading, as will soon become apparent, and therefore we call it differently as it pictures better the nature of the problem. 
\begin{definition}\label{def:3-fold}
A polynomial sequence $\{P_n\}_{n\geqslant 0}$ is {\bf $\mathbf{m}$-fold symmetric} if 
\begin{equation}\label{sym}
	P_n(\omega x) = \omega^{n} \, P_n(x), \ \text{ for any }\  n\geqslant 0. 
\end{equation}
where 
$
	\omega= \e^{ \frac{2 i  \pi}{m}}. 
$
\end{definition}
By induction, property \eqref{sym} corresponds to 
$$ 
	P_n(\omega^k x) = \omega^{nk} \, P_n(x), \ \text{ for any }\  n\geqslant 0 \ \text{ and }\ k=1,2,\ldots, m-1. 
$$ 
So, an $m$-fold symmetric sequence is a $(m-1)$-symmetric sequence in  \cite{ BRMG16,BR08, BCBR11, DM95,Mar89, LOHerm}. 
(The symmetric sequences of Hermite and Gegenbauer polynomials are examples of {\it twofold symmetric} polynomials.) In particular, a {\it threefold symmetric} sequence $\{P_n\}_{n\geqslant 0}$, is such that 
$$
	P_n(\omega x) =  \omega^n P_n(x) \ \text{ and } \ 
	P_n(\omega^2 x) = \omega^{2n} P_n(x)  \ \text{ with } \ \omega =\e^{ \frac{2 i  \pi}{3}},\ n\geqslant0, 
$$
which corresponds to say that there exist three sequences $\{P_n^{[j]}\}_{n\geqslant 0}$ with $j\in\{0,1,2\}$ such that 
\begin{equation}\label{CD}
	P_{3n+j}(x) = x^j P_n^{[j]}(x^3), \quad j=0,1,2. 
\end{equation}
Throughout, we will refer to $\{P_n^{[j]}\}_{n\geqslant 0}$ as the {\it diagonal components of the cubic decomposition} of the threefold symmetric sequence $\{P_n\}_{n\geqslant 0}$, which is line with the terminology adopted in a more general cubic decomposition framework in \cite{MarMes}. 

In the case of a $2$-\OPS\ $\{ P_n\}_{n\geqslant 0}$ with respect to a vector linear functional ${\bf u} = (u_0,u_1)$, we have, as discussed in Section \ref{sec:intro}, 
\begin{eqnarray} \label{2OPS u0}
	&&  \langle u_0 , x^m P_n \rangle =
		\left\{\begin{array}{lcl}
				0 & \text{for} & n\geqslant 2m+1\\
				N_0(n)\neq 0 & \text{for} & n= 2m\\
		\end{array}\right. \\
	&&  \label{2OPS u1}
	 \langle u_1 , x^m P_n \rangle  =
		\left\{\begin{array}{lcl}
				0 & \text{for} & n\geqslant 2m+2\\
				N_1(n)\neq 0 & \text{for} & n= 2m+1, \\
		\end{array}\right. 
\end{eqnarray}
and there exists a set of coefficients $\{(\beta_n, \alpha_n,\gamma_n)\}_{n\geqslant0}$ such that $\{ P_n\}_{n\geqslant 0}$ satisfies a third order recurrence relation (see \cite{Mar89, VIseg}) 
\begin{equation} \label{rec rel 2OPS}
	P_{n+1}(x) = (x-\beta_n) P_n(x) - \alpha_n P_{n-1}(x) - \gamma_{n-1} P_{n-2}(x) 
\end{equation}
with $P_{-2}(x) = P_{-1}(x) =0 $ and $P_0(x)=1$. Straightforwardly from the definition, one has 
\begin{equation} \label{gammas}
	 \gamma_{2n+1} =\frac{\langle u_0, x^{n+1} P_{2n+2}\rangle}{\langle u_0, x^{n} P_{2n}\rangle} \neq 0,
	\quad  \gamma_{2n+2} = \frac{\langle u_1, x^{n+1} P_{2n+3}\rangle}{\langle u_1, x^{n} P_{2n+1} \rangle} \neq 0 , \ \ n\geqslant 0, 
\end{equation} 
or, equivalently, 
$$
	\langle u_0, x^{n+1} P_{2n+2} \rangle = \prod_{k=0}^n \gamma_{2k+1}
	\quad \text{and }\quad 
	\langle u_1, x^{n+1} P_{2n+3} \rangle = \prod_{k=0}^n \gamma_{2k+2} ,\quad \text{for }\quad n\geqslant 0.
$$
Whenever a $2$-\OPS\ is threefold symmetric, the recurrence relation \eqref{rec rel 2OPS} reduces to a three-term relation, where the $\beta$- and $\alpha$-coefficients all vanish. For this type of $2$-\OPS s  more can be said.  

\begin{proposition}\label{prop:3fold sym}\cite{DM92} Let $\{P_n\}_{n\geqslant 0}$  be a 2-orthogonal polynomial sequence with respect to the linear functional $U=(u_0, u_1)$ satisfying \eqref{2OPS u0}-\eqref{2OPS u1}. The following statements are equivalent: 
\begin{enumerate}
\item[(a)] The sequence $\{P_n\}_{n\geqslant 0}$  is threefold symmetric. 
\item[(b)] The linear functional is threefold symmetric, that is, 
\begin{equation}\label{sym u0 u1}
	(u_\nu)_{3n+j} =0 , \ \text{ for } \ \nu =0,1 \ \text{ and }\  j=1,2 \ \text{ with  }\ j\neq \nu, 
\end{equation}
where $(u)_m \coloneqq \langle u, x^m\rangle$ for $m\geqslant 0$. 
\item[(c)] The sequence $\{P_n\}_{n\geqslant 0}$  satisfies the third order recurrence relation 
\begin{equation}\label{rec rel 3sym}
	P_{n+1}(x) = x P_n(x)  - \gamma_{n-1} P_{n-2}(x) , \qquad n\geqslant 2, 
\end{equation}
with $P_0(x) = 1, \ P_1(x)= x$ and $P_2(x) = x^2$. 
\end{enumerate}
\end{proposition}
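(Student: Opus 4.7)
The plan is to establish the three equivalences by the cyclic chain (c) $\Rightarrow$ (a) $\Rightarrow$ (b) $\Rightarrow$ (c), with the last step being the most substantial. The implications connecting (a) and (c) are formal manipulations of the recurrence relations, (a) $\Rightarrow$ (b) exploits the cubic decomposition \eqref{CD}, and (b) $\Rightarrow$ (c) requires an inductive argument combining orthogonality with congruences modulo $3$.

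For (c) $\Rightarrow$ (a), I would proceed by induction on $n$. The initial values $P_0 = 1$, $P_1 = x$, $P_2 = x^2$ satisfy $P_k(\omega x) = \omega^k P_k(x)$ by inspection. Substituting $x \mapsto \omega x$ in the three-term recurrence \eqref{rec rel 3sym} and using $\omega^{n-2} = \omega^{n+1}$ (since $\omega^3 = 1$) gives $P_{n+1}(\omega x) = \omega^{n+1} P_{n+1}(x)$. The converse (a) $\Rightarrow$ (c) is handled by substituting $x \mapsto \omega x$ in the full recurrence \eqref{rec rel 2OPS}; applying the assumed symmetry to each $P_k$ and dividing by $\omega^{n+1}$, comparison with the original recurrence forces $\beta_n(1 - \omega^{-1}) = 0$ and $\alpha_n(1 - \omega^{-2}) = 0$, hence $\beta_n = \alpha_n = 0$, while the $\gamma_{n-1}$-term is preserved automatically thanks to $\omega^3 = 1$.

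For (a) $\Rightarrow$ (b), I would use the cubic decomposition $P_{3n+j}(x) = x^j P_n^{[j]}(x^3)$ recalled in \eqref{CD}. Specializing \eqref{2OPS u0} to $P_{3n+j}$ with $k = 0$ and $j \in \{1, 2\}$ gives $\langle u_0, x^j P_n^{[j]}(x^3)\rangle = 0$ for every $n \geqslant 0$. Since $\{P_n^{[j]}\}_{n \geqslant 0}$ is a basis of polynomials in $y = x^3$, this translates into $(u_0)_{3n+j} = 0$ for $j \in \{1, 2\}$. An analogous application of \eqref{2OPS u1} with $j = 2$ and $k = 0$ yields $(u_1)_{3n+2} = 0$, establishing \eqref{sym u0 u1}.

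The remaining implication (b) $\Rightarrow$ (c) would be shown by strong induction on $n$, proving $\beta_n = \alpha_n = 0$. For the base case, $\langle u_0, P_1\rangle = 0$ combined with $P_1 = x - \beta_0$ gives $(u_0)_1 = \beta_0 (u_0)_0$; since $(u_0)_1 = 0$ by (b) and $(u_0)_0 \neq 0$, one obtains $\beta_0 = 0$. For the inductive step, the hypothesis combined with (c) $\Rightarrow$ (a) applied to the partial sequence $\{P_k\}_{k \leqslant n}$ renders these polynomials threefold symmetric, so each $P_k$ contains only monomials $x^j$ with $j \equiv k \pmod{3}$. Pairing $\langle u_\nu, x^k \cdot\rangle$ with the recurrence \eqref{rec rel 2OPS} for $P_{n+1}$, the vanishing moments from (b) together with this residue structure force three of the four summands to drop out. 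Choosing $(\nu, k)$ so that $k$ lies in the appropriate residue class modulo $3$ and such that either $n = 2k$ (for $\nu = 0$) or $n = 2k+1$ (for $\nu = 1$) isolates $\beta_n$ or $\alpha_n$ multiplied by a nonzero value $N_\nu(\cdot)$. The main obstacle is precisely this case analysis: one must split according to $n$ modulo $6$ to guarantee that the required pair $(\nu, k)$ exists for each targeted coefficient, and verify that the surviving orthogonality value is genuinely nonzero.
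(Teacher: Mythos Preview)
The paper does not supply its own proof of this proposition; it is quoted from \cite{DM92}. So there is no ``paper's proof'' to compare against, and I will evaluate your argument on its own merits.

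Your treatment of (c)\,$\Leftrightarrow$\,(a) is clean and correct, and your (a)\,$\Rightarrow$\,(b) via the cubic decomposition is fine for the three moment conditions actually displayed in \eqref{sym u0 u1}.

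The difficulty lies in (b)\,$\Rightarrow$\,(c). As \eqref{sym u0 u1} is literally written, it asserts only $(u_0)_{3n+1}=(u_0)_{3n+2}=0$ and $(u_1)_{3n+2}=0$; the moments $(u_1)_{3n}$ are \emph{not} assumed to vanish. With only these hypotheses your induction stalls already at $\alpha_2$: once $\beta_0=\beta_1=\beta_2=\alpha_1=0$ are secured, one has $P_3=x^3-\alpha_2 x-\gamma_1$, and the relation $\langle u_1,P_3\rangle=0$ together with $(u_1)_0=0$ (dual-sequence normalisation) gives $\alpha_2=(u_1)_3/(u_1)_1$, which is not forced to be zero. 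No choice of $(\nu,k)$ in your scheme avoids the unconstrained moments $(u_1)_{3n}$ for odd $n$, so the mod-$6$ split you anticipate will not close the argument.

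This is almost certainly a slip in the displayed condition: elsewhere the paper uses the full symmetry $(u_0)_n=(u_1)_{n+1}=0$ for $n\not\equiv 0\pmod 3$, i.e.\ also $(u_1)_{3n}=0$. You can derive this extra condition under (a) by the same cubic-decomposition argument applied to $P_{3m}=P_m^{[0]}(x^3)$, using $\langle u_1,P_0\rangle=0$ from the dual sequence. With the amended (b) (equivalently, $j\in\{0,1,2\}$ with $j\neq\nu$), your induction goes through with a simple even/odd split: for $n=2k$ take $(\nu,k)=(0,k)$ to kill $\beta_n$ and $(\nu,k)=(1,k-1)$ to kill $\alpha_n$; for $n=2k+1$ swap the roles. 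No mod-$6$ analysis is needed, and each step isolates a single coefficient against the nonzero value $N_\nu(\cdot)$.
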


Each of the components of the cubic decomposition are also $2$-\OPS s. 

\begin{lemma}\cite{Mar89}\label{lem: 2sym components rec rel} Let $\{P_n\}_{n\geqslant 0}$ be a threefold symmetric $2$-OPS. The three polynomial sequences $\{P_n^{[j]}\}_{n\geqslant 0}$ (with $j=0,1,2$) in the cubic decomposition of $\{P_n\}_{n\geqslant 0}$ described in  \eqref{CD} are 2-orthogonal polynomial sequences satisfying: 
\begin{equation}\label{CD rec rel}
		P_{n+1}^{[j]}(x) = (x-\beta_{n}^{[j]}) P_n^{[j]}(x)  -\alpha_{n}^{[j]} P_{n-1}^{[j]}(x) - \gamma_{n-1}^{[j]} P_{n-2}^{[j]}(x) ,  
\end{equation}
where 
$$\begin{array}{l}
	\beta_n^{[j]} = \gamma_{3n-1+j}+\gamma_{3n+j}+\gamma_{3n+1+j}, \qquad n\geqslant 0,\\
	\alpha_n^{[j]} = \gamma_{3n-2+j}\gamma_{3n+j}+\gamma_{3n-1+j}\gamma_{3n-3+j}+\gamma_{3n-2+j}\gamma_{3n-1 +j}
			, \qquad n\geqslant 1, \\
	\gamma_n^{[j]} = \gamma_{3n-2+j}\gamma_{3n+j}\gamma_{3n+2+j}\neq 0, \qquad n\geqslant 2.\\
	\end{array}
$$
Moreover, $\{P_n^{[j]}\}_{n\geqslant 0}$ is 2-orthogonal with respect to the vector functional $U^{[j]}=(u_0^{[j]},u_1^{[j]})$ with
$$
	\ u_\nu^{[j]} = \sigma_3 (x^j u_{3\nu+j}) \ \text{ for each } \ j=0,1,2 \ \text{ and } \ \nu=0,1. 
$$ 
where $\{u_n\}_{n\geqslant0}$ represents the dual sequence of $\{P_n\}_{n\geqslant 0}$ and $\sigma_3:\mathcal{P}' \longrightarrow \mathcal{P}' $ is the operator defined by $\langle \sigma_3 (v), f(x) \rangle \coloneqq \langle v, f(x^3) \rangle $ for any $v\in \mathcal{P}' $ and $f\in\mathcal{P} $.
\end{lemma}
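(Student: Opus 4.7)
The plan is to derive the recurrence \eqref{CD rec rel} for each component $\{P_n^{[j]}\}_{n\geqslant 0}$ by iterating the three-term recurrence \eqref{rec rel 3sym} from Proposition~\ref{prop:3fold sym} and then reading off $\beta_n^{[j]}$, $\alpha_n^{[j]}$ and $\gamma_n^{[j]}$ through the cubic decomposition \eqref{CD}. The 2-orthogonality of each component and the identification of the vector functional $U^{[j]}$ will then follow by combining the converse Shohat--Favard theorem for 2-orthogonal sequences, recalled in Section~\ref{sec:intro}, with a short duality calculation based on the operator $\sigma_3$.

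Concretely, I would first iterate \eqref{rec rel 3sym} three times to obtain
\[
  P_{n+3}(x) = (x^3-\gamma_{n+1})\,P_n(x) - \gamma_n\, x P_{n-1}(x) - \gamma_{n-1}\, x^2 P_{n-2}(x).
\]
Since $P_{n-1}$ and $P_{n-2}$ sit in different residue classes modulo $3$ than $P_n$, the next step is to eliminate them. Using \eqref{rec rel 3sym} in the form $xP_k=P_{k+1}+\gamma_{k-1}P_{k-2}$ one shows
\[
  xP_{n-1}(x) = P_n(x) + \gamma_{n-2}\,P_{n-3}(x),
\]
and a two-step reduction gives
\[
  x^2 P_{n-2}(x) = P_n(x) + (\gamma_{n-2}+\gamma_{n-3})\,P_{n-3}(x) + \gamma_{n-3}\gamma_{n-5}\,P_{n-6}(x),
\]
so that after substitution only indices congruent to $n$ modulo $3$ remain. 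Setting $n=3m+j$, using $P_{3m+j}(x)=x^j P_m^{[j]}(x^3)$ from \eqref{CD}, dividing by $x^j$ and putting $y=x^3$ yields \eqref{CD rec rel}, and a term-by-term comparison delivers the claimed expressions for $\beta_m^{[j]}$, $\alpha_m^{[j]}$ and $\gamma_{m-1}^{[j]}$.

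For the second part of the statement, the condition $\gamma_n^{[j]}=\gamma_{3n-2+j}\gamma_{3n+j}\gamma_{3n+2+j}\neq 0$ is immediate from $\gamma_k\neq 0$ for $k\geqslant 1$, so the converse Shohat--Favard theorem for 2-orthogonal sequences ensures that $\{P_n^{[j]}\}_{n\geqslant 0}$ is 2-orthogonal with respect to the first two elements of its own dual sequence. To identify these elements, I would verify directly that $\sigma_3(x^j u_{3\nu+j})$ is the $\nu$-th dual element of $\{P_n^{[j]}\}_{n\geqslant 0}$ by computing, via \eqref{properties functionals} and the definition of $\sigma_3$,
\[
  \langle \sigma_3(x^j u_{3\nu+j}),\, P_n^{[j]}(x)\rangle
  = \langle u_{3\nu+j},\, x^j P_n^{[j]}(x^3)\rangle
  = \langle u_{3\nu+j},\, P_{3n+j}(x)\rangle = \delta_{\nu,n}.
\]
Restricting to $\nu=0,1$ then yields the vector functional $U^{[j]}=(u_0^{[j]},u_1^{[j]})$ of the statement.

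The only genuine difficulty is the bookkeeping in the reduction of $xP_{n-1}$ and $x^2 P_{n-2}$: one has to apply \eqref{rec rel 3sym} the right number of times so that only polynomials with indices congruent to $n\bmod 3$ remain, and then verify that the resulting linear combinations collapse precisely into the symmetric expressions for $\beta_m^{[j]}$, $\alpha_m^{[j]}$ and $\gamma_{m-1}^{[j]}$ stated in the lemma. Everything else is algebraic manipulation or a direct appeal to the structural results already cited.
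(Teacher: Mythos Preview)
Your argument is correct. The paper does not give its own proof of this lemma---it simply cites \cite{Mar89}---so there is nothing to compare your approach against in terms of the paper's own argument. Your derivation is in fact the standard one: iterate \eqref{rec rel 3sym} three times, reduce $xP_{n-1}$ and $x^2P_{n-2}$ via the same recurrence so that only indices congruent to $n\bmod 3$ survive, and read off the coefficients after the substitution $y=x^3$. The duality computation identifying $u_\nu^{[j]}=\sigma_3(x^j u_{3\nu+j})$ as the $\nu$-th element of the dual sequence of $\{P_n^{[j]}\}_{n\geqslant 0}$ is also correct and direct.

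One small remark on your bookkeeping: when you carry out the reduction for low values of $m$ (specifically $m=0,1$), some of the $\gamma$-indices become nonpositive; you should note the standing convention $\gamma_k=0$ for $k\leqslant 0$ (which is consistent with the initial conditions $P_0=1$, $P_1=x$, $P_2=x^2$ in \eqref{rec rel 3sym}) so that the recurrence \eqref{CD rec rel} also holds at the boundary with the appropriate terms vanishing. Other than this, the proof is complete.
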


The orthogonality measures are supported on a starlike set with three rays.

\begin{theorem}\cite{AKVI}\label{thm:3sym 2OPS} If $\gamma_n>0$ for $n\geqslant 1$ in \eqref{rec rel 3sym}, then there exists a vector of linear functionals  $(u_0,u_1)$ such that the polynomials $P_n$ defined by \eqref{rec rel 3sym} satisfy the 2-orthogonal relations \eqref{2OPS u0}-\eqref{2OPS u1}. Moreover, the vector of linear functionals  $(u_0,u_1)$ satisfies \eqref{sym u0 u1} and there exist a vector of two measures $(\mu_0,\mu_1)$ such that 
\begin{eqnarray*}
	&& \langle u_0,f(x) \rangle = \int_S f(x) \text{d} \mu_0(x) \\
	&& \langle u_1,f(x) \rangle = \int_S f(x) \text{d} \mu_1(x) 
\end{eqnarray*}
where $S$ represents the starlike set 
$$
	S\coloneqq \bigcup_{k=0}^2 \Gamma_k\qquad \text{with } \qquad 
	\Gamma_k = [0,\e^{2\pi i k/3} \infty) , 
$$
and the measures have a common support which is a subset of $S$ and are invariant under rotations of $2\pi /3$. 
\end{theorem}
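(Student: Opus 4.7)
The plan breaks into three stages: existence of functionals via the Favard-type spectral theorem for 2-orthogonality, threefold symmetry extracted from Proposition \ref{prop:3fold sym}, and construction of the representing measures by means of the cubic decomposition followed by a rotational symmetrisation.

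The first stage is purely algebraic. The recurrence \eqref{rec rel 3sym} is the specialisation of \eqref{rec rel 2OPS} obtained by setting $\beta_n = \alpha_n = 0$, and the hypothesis $\gamma_n > 0$ guarantees $\gamma_n \neq 0$ for all $n \geq 1$. The spectral theorem for 2-orthogonality recalled in Section \ref{sec:intro} then yields a vector of linear functionals $(u_0, u_1)$ satisfying \eqref{2OPS u0}-\eqref{2OPS u1}; one takes $u_0, u_1$ to be the first two elements of the canonical dual sequence of $\{P_n\}_{n \geq 0}$. Proposition \ref{prop:3fold sym}, in the implications $(c) \Rightarrow (a) \Rightarrow (b)$, immediately gives the threefold symmetry \eqref{sym u0 u1}, so that the only nonzero moments of $u_j$ occur at degrees congruent to $j$ modulo $3$.

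For the measure representation, the key device is the cubic decomposition of Lemma \ref{lem: 2sym components rec rel}. For $j = 0, 1$, the diagonal component $\{P_n^{[j]}\}_{n \geq 0}$ is itself a 2-orthogonal polynomial sequence whose first dual element is $u_0^{[j]} = \sigma_3(x^j u_j)$, and under the hypothesis $\gamma_n > 0$ its recurrence coefficients $\beta_n^{[j]}, \alpha_n^{[j]}, \gamma_n^{[j]}$ are all strictly positive, being sums and products of positive $\gamma_k$'s. The substantive analytic input --- and the main obstacle of the proof --- is a positivity criterion for quasi-definite 2-orthogonal polynomials on the half-line guaranteeing that $u_0^{[j]}$ admits a positive Borel representation $\tilde{\nu}_j$ supported on $[0, \infty)$; this must be imported from the theory of positive-definite 2-orthogonality developed in \cite{AKVI}, since in general positive recurrence coefficients for a 2-orthogonal sequence do not by themselves force positivity of the representing functional.

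Granted this input, one constructs $\mu_j$ as follows. Pulling $\tilde{\nu}_j$ back through $y = x^3$ with the Jacobian adjustment needed to match the definition $u_0^{[j]} = \sigma_3(x^j u_j)$ produces a positive measure $\nu_j$ on $[0, \infty)$ such that $\int_0^\infty x^{3k+j}\, d\nu_j(x) = \tfrac{1}{3}\langle u_0^{[j]}, x^k\rangle$ for every $k \geq 0$. Symmetrisation across the three rays,
\[
\mu_j := \sum_{k=0}^{2} \omega^{-jk}\, (R_{\omega^k})_* \nu_j, \qquad R_{\omega^k}(x) := \omega^k x,
\]
combined with the identity $\sum_{k=0}^{2} \omega^{k(m-j)} = 3\,\mathbf{1}_{m \equiv j\,(\mathrm{mod}\,3)}$, then yields $\int_S x^m\,d\mu_j(x) = (u_j)_m$ for every $m \geq 0$, consistent with the threefold symmetry already established. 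The common support is manifestly contained in $S$ and invariant under the rotation by $2\pi/3$, so no further analysis beyond the positivity step is needed.
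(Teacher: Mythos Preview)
The paper does not supply its own proof of this theorem; it is stated with a citation to \cite{AKVI} and no argument follows in the text. So there is no internal proof to compare against.

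Your outline is sound at the algebraic level: the Favard-type spectral theorem and Proposition~\ref{prop:3fold sym} give the functionals and their threefold symmetry, and the cubic decomposition of Lemma~\ref{lem: 2sym components rec rel} reduces the question to representing the first component functionals $u_0^{[j]}$ on $[0,\infty)$. But, as you yourself flag, the hard step---positivity of a representing measure for a $2$-orthogonal system whose recurrence coefficients $\beta_n^{[j]}, \alpha_n^{[j]}, \gamma_n^{[j]}$ are all positive---is not settled by your reduction and must still be imported from \cite{AKVI}. So your argument is not an independent proof; it is a repackaging that rests on the same external input the paper already cites. If your aim was a self-contained proof, this is the gap.

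One small caveat on the symmetrisation: for $j=1$ your measure $\mu_1 = \sum_k \omega^{-k}(R_{\omega^k})_*\nu_1$ is a genuinely complex measure and satisfies $(R_\omega)_*\mu_1 = \omega\,\mu_1$ rather than $(R_\omega)_*\mu_1 = \mu_1$. This is forced---a literally rotation-invariant measure would have $(u_1)_1=0$---so ``invariant under rotations of $2\pi/3$'' in the theorem statement should be read as invariance of the support (or as this covariance), not strict invariance of the measure itself. Your moment computation is correct regardless, but the wording of your final sentence slightly oversells what you have shown.
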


\begin{figure}[h]
\begin{center}
\noindent\fbox{\ 
\begin{tikzpicture}
  \tkzInit[xmin=-10,ymin=-10,xmax=10,ymax=10]
\draw [->,very thick] (-1,1.73) -- (-0.5,0.86)  ;
\draw [-, very thick]  (-0.5,0.86)  -- (0,0)  ;
\node [above] at (-0.9,0.9) {\scriptsize${\Gamma}_1$};
\draw [->, very thick] (-1,-1.73) -- (-0.5,-0.86) ;
\draw [-, very thick] (-0.5,-0.86) -- (0,0)  ;
\node [below] at (-0.9,-0.7) {\scriptsize${\Gamma}_2$};
\draw [->, very thick] (0,0) -- (0.5,0)  ;
\draw [-, very thick] (0.5,0) -- (1,0)  ;
\node [below] at (1,0) {\scriptsize${\Gamma}_0$};
\end{tikzpicture}\ }\end{center}
\caption{The three rays ${\Gamma}_0, \ {\Gamma}_1$ and ${\Gamma}_2$.}\label{fig: 3 rays}
\end{figure}
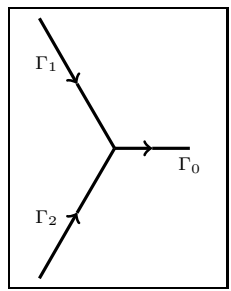

Regarding the behavior of the zeros of any threefold symmetric 2-\OPS $\{P_n\}_{n\geqslant0}$, it was shown in \cite{BR08} that between two nonzero consecutive roots of $P_{n+2}$ there is exactly one root of $P_n$ and one root of $P_{n+1}$ and all those roots lie on the starlike set $S$. 

\begin{proposition}\label{prop:interlace}
Let $\{P_n\}_{n\geqslant0}$ be a 2-OPS satisfying \eqref{rec rel 3sym}. If $\gamma_n >0$, then the following statements hold: 
\begin{enumerate}
\item[(a)] If $x$ is a zero of $P_{3n+j}$, then $\omega^j x$ are also zeros of $P_{3n+j}$ with $\omega=\e^{2\pi i/3}$.
\item[(b)] $0$ is a zero of $P_{3n+j}$ of multiplicity $j$ when $j=1,2$.
\item[(c)] $P_{3n+j}$ has $n$ distinct positive real zeros $\{x_{n,k}^{(j)}\}_{k=1}^n$ with $0<x_{n,1}^{(j)}<\ldots <x_{n,n}^{(j)}$.  
\item[(d)] Between two real zeros of $P_{3n+j+3}$ there exist only one zero of  $P_{3n+j+2}$ and only one zero of $P_{3n+j+1}$, that is, 
$x_{n,k}^{(j+2)}<x_{n,k+1}^{(j)}<x_{n,k+1}^{(j+1)}<x_{n,k+1}^{(j+2)}$. 
\end{enumerate}
\end{proposition}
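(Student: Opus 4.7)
The proof plan rests on the cubic decomposition $P_{3n+j}(x)=x^{j}P_{n}^{[j]}(x^{3})$ from \eqref{CD}, which lets every question about the zeros of $P_{3n+j}$ be rephrased in terms of the $n$-th diagonal component $P_{n}^{[j]}$ in the variable $y=x^{3}$. Within that framework, (a) follows at once from Proposition \ref{prop:3fold sym}: the threefold symmetry $P_{3n+j}(\omega x)=\omega^{j}P_{3n+j}(x)$ (equivalent to \eqref{rec rel 3sym}) forces $\omega x_{0}$ and $\omega^{2}x_{0}$ to be zeros whenever $x_{0}$ is. The factor $x^{j}$ in \eqref{CD} gives the lower bound in (b) directly; the matching upper bound is deferred until (c) supplies $P_{n}^{[j]}(0)\neq 0$.

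To establish (c) I would invoke Lemma \ref{lem: 2sym components rec rel}: since $\gamma_{n}>0$, the component coefficients $\gamma_{n}^{[j]}=\gamma_{3n-2+j}\gamma_{3n+j}\gamma_{3n+2+j}$ are strictly positive, and pushing the measures of Theorem \ref{thm:3sym 2OPS} forward through $\sigma_{3}$ shows that $(u_{0}^{[j]},u_{1}^{[j]})$ is represented by a pair of positive measures supported on $[0,\infty)$. Hence $\{P_{n}^{[j]}\}_{n\geqslant 0}$ is a positive $2$-OPS on the positive real axis, and a standard AT/Markov-type zero-location argument (as used in \cite{BR08}) delivers $n$ distinct strictly positive real zeros $y_{n,1}^{(j)}<\cdots<y_{n,n}^{(j)}$ for $P_{n}^{[j]}$. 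Setting $x_{n,k}^{(j)}=(y_{n,k}^{(j)})^{1/3}$, and observing in passing that $P_{n}^{[j]}(0)\neq 0$, finishes both (b) and (c).

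The main obstacle will be the interlacing in (d). My plan is to translate the claim through \eqref{CD} into an interlacing of the zeros of $P_{n}^{[j]}$, $P_{n}^{[j+1]}$ and $P_{n+1}^{[j]}$ (indices read cyclically modulo $3$ with a degree shift when $j+1$ or $j+2$ reaches $3$), and then to argue by induction on $n$ from the recurrence \eqref{rec rel 3sym}. Fixing two consecutive positive zeros $a<b$ of $P_{3n+j+3}$, I will evaluate $P_{k+1}(x)=xP_{k}(x)-\gamma_{k-1}P_{k-2}(x)$ at $x=a$ and $x=b$ for $k=3n+j+2$ and $k=3n+j+1$; the inductive sign information, together with the positivity of each $\gamma_{k-1}$, will pin down $\mathrm{sign}\,P_{3n+j+2}(a)$, $\mathrm{sign}\,P_{3n+j+2}(b)$, $\mathrm{sign}\,P_{3n+j+1}(a)$ and $\mathrm{sign}\,P_{3n+j+1}(b)$, and force a single sign change of each polynomial inside $(a,b)$. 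Combined with the total zero count from (c), this forbids further zeros in $(a,b)$ and yields the claimed chain. The delicate points will be the base of the induction and the handling of zeros collapsing into the factor $x^{j}$ near the origin when $j\geqslant 1$; these will be dealt with along the same lines as in \cite{BR08}, to which the statement is attributed.
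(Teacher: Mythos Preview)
The paper's own proof is a single sentence: it simply invokes \cite[Theorem 2.2]{BR08} for the case $d=2$ and does not reprove anything. Your proposal likewise defers to \cite{BR08} at the crucial points, so the two are aligned in spirit; you just supply more of the scaffolding behind that citation.

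One step in your sketch deserves a caution. In (c) you argue that pushing the measures of Theorem \ref{thm:3sym 2OPS} forward through $\sigma_{3}$ yields positive measures for $(u_{0}^{[j]},u_{1}^{[j]})$. But Lemma \ref{lem: 2sym components rec rel} identifies $u_{\nu}^{[j]}=\sigma_{3}(x^{j}u_{3\nu+j})$, which for $j\geqslant 1$ or $\nu=1$ involves higher-index dual elements $u_{3},u_{4},u_{5}$, not the $u_{0},u_{1}$ for which Theorem \ref{thm:3sym 2OPS} directly provides positive measures; these higher elements are polynomial combinations of $u_{0},u_{1}$ and need not individually be positive. The route actually taken in \cite{BR08} bypasses measures entirely and works directly with the recurrence and sign changes, which is precisely the induction you outline for (d); that recurrence-based argument handles (c) and (d) simultaneously and avoids the positivity issue.
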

\begin{proof} The result is a consequence of \cite[Theorem 2.2]{BR08} for the case $d=2$. 
\end{proof}

So, $P_{3n+j}$ with $j\in\{0,1,2\}$ has $n$ zeros $(x_{n,1}^{(j)}, \ldots ,x_{n,n}^{(j)})$  on the positive real line and all the other zeros are obtained by rotations of $2\pi/3$ of $(x_{n,1}^{(j)}, \ldots ,x_{n,n}^{(j)})$ and $0$ is a single zero for $P_{3n+1}$ and a double zero for $P_{3n+2}$. The connection between the asymptotic behavior of the $\gamma$-recurrence coefficients in \eqref{rec rel 3sym}  and the 
upper bound for the largest zero $x_{n,n}^{(j)}$ is discussed in \cite{AKS}, but for bounded recurrence coefficients. Here, we extend that discussion, by embracing the cases where the recurrence coefficients are unbounded with different asymptotic behavior for even and odd order indices, which will be instrumental in Section \ref{sec:Hahn 2OPS}.

\begin{theorem}\label{thm: asym zero} If $\gamma_n $ in \eqref{rec rel 3sym} are positive and, additionally, $\gamma_{2n} = c_0 n^{\alpha} +{o}(n^{\alpha})$ and $\gamma_{2n+1} = c_1 n^{\alpha} +{o}(n^{\alpha})$ for large $n$, with $\min\{c_0,c_1\}\geqslant 0$, $\ c=\max \{c_0,c_1\}>0$ and $\alpha\geqslant 0$, then largest zero in absolute value $|x_{n,n}|$ behaves as 
\begin{equation}\label{larger zero}
	|x_{n,n}| \leqslant \frac{3}{2^{2/3}} c^{1/3} n^{\alpha/3} + {o}(n^{\alpha/3}), \qquad n\geqslant 1.
\end{equation}
\end{theorem}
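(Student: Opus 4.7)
The plan is to interpret the zeros of $P_n$ as eigenvalues of a banded companion-type matrix, then to control the spectrum via a diagonal similarity transformation that balances the two nonzero diagonals of the matrix against each other.

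Rewriting \eqref{rec rel 3sym} in matrix form, set $\vec{P}(x)=(P_0(x),\ldots,P_{n-1}(x))^{T}$, so that $x\vec{P}(x)=A_n\vec{P}(x)+P_n(x)\,e_{n-1}$, where $A_n$ is the $n\times n$ matrix whose only nonzero entries are $(A_n)_{k,k+1}=1$ for $0\le k\le n-2$ and $(A_n)_{k,k-2}=\gamma_{k-1}$ for $2\le k\le n-1$. A direct induction on $k$ gives $(A_n\vec{P}(\lambda))_k=\lambda P_k(\lambda)$ whenever $P_n(\lambda)=0$, so the eigenvalues of $A_n$ are precisely the zeros of $P_n$, whence $|x_{n,n}|\le\|A_n\|$ in any submultiplicative norm.

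To improve on the crude bound $\|A_n\|_\infty \le 1+\max_k\gamma_k$ (which yields only $|x_{n,n}|=O(n^\alpha)$), apply the diagonal similarity $\tilde A_n = D_n^{-1} A_n D_n$ with $D_n=\mathrm{diag}(d_0,\ldots,d_{n-1})$, $d_k>0$. Setting $u_k := d_{k+1}/d_k$, the nonzero entries of $\tilde A_n$ are $u_k$ on the super-diagonal and $\gamma_{k-1}/(u_{k-1}u_{k-2})$ on the second sub-diagonal, and the $i$-th row sum (for $2\le i\le n-2$) is
\[
R_i \;=\; u_i + \frac{\gamma_{i-1}}{u_{i-1}u_{i-2}}.
\]
Choosing the balanced scaling $u_k = (2c)^{1/3}(k/2)^{\alpha/3}$ for $k\ge 1$ (and any positive $u_0$), and using the hypothesis $\gamma_{k-1}\le c(k/2)^\alpha(1+o(1))$ as $k\to\infty$ with $c=\max\{c_0,c_1\}$, one obtains
\[
R_i \;\le\; \Bigl[(2c)^{1/3}+\frac{c}{(2c)^{2/3}}\Bigr](i/2)^{\alpha/3}\bigl(1+o(1)\bigr) \;=\; \frac{3}{2^{2/3}}\,c^{1/3}\,(i/2)^{\alpha/3}\bigl(1+o(1)\bigr),
\]
the constant $3/2^{2/3}$ being precisely the minimum of $f(C)=C+c/C^{2}$, attained at the critical point $C=(2c)^{1/3}$; this one-variable optimisation is what dictates the scaling chosen for $u_k$.

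Maximising $R_i$ over $i$, using $(n/2)^{\alpha/3}\le n^{\alpha/3}$, and invoking $|\lambda|\le\|\tilde A_n\|_\infty=\max_i R_i$ for every eigenvalue $\lambda$, one concludes that $|x_{n,n}|\le \tfrac{3}{2^{2/3}}c^{1/3} n^{\alpha/3}+o(n^{\alpha/3})$. The \emph{main subtlety} is securing the $o(1)$ uniformly as $i\to\infty$ despite the parity-dependent behaviour of the $\gamma_k$: because the chosen $u_k$ depends only on the upper envelope $c(k/2)^\alpha$ and the bound invokes only the one-sided asymptotic $\gamma_k\le c(k/2)^\alpha(1+o(1))$, the argument remains valid in the degenerate case $\min\{c_0,c_1\}=0$. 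The boundary rows $i=0,1,n-1$ must be checked separately but contribute only lower-order terms.
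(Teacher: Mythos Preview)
Your proposal is correct and follows essentially the same strategy as the paper's proof: both realise the zeros of $P_n$ as eigenvalues of the banded Hessenberg matrix associated with the recurrence, bound the spectral radius by $\|D^{-1}A_nD\|_\infty$ for a diagonal $D$, and then optimise the scaling so that the super-diagonal and second sub-diagonal balance. The paper parametrises the scaling as $d_k=d^{k}(k!)^{\alpha/3}$ and minimises $d+c/d^2$ over the free constant $d$, arriving at the same critical choice $d=(2c)^{1/3}$ and the same constant $3/2^{2/3}$; your choice $u_k=(2c)^{1/3}(k/2)^{\alpha/3}$ is the asymptotically equivalent ratio form with the optimisation already performed.
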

\begin{proof} Consider the Hessenberg matrix 
$$ 
	\mathbf{H}_n = \left( 
		\begin{array}{ccccccccc}
			0 		 & 1 			& 0 &  \cdots & 0 & 0 & 0 & 0 & 0\\
			0 		 & 0 			& 1 & 0 & \cdots & 0 & 0 & 0 & 0\\
			\gamma_1 & 0 			& 0 & 1 &  0 & \cdots & 0 & 0 & 0 \\
			0 		 & \gamma_2 	& 0 & 0 & 1 & 0 & \cdots & 0 & 0 \\
			    		 &   &   \ddots&    &   &    \ddots   &   &  & \\
					&   &  &   \ddots &   &   &    \ddots   &   &  \\
			   		0 & 0  &   \cdots&  0 &   \gamma_{n-4} &   0&   0& 1& 0  \\
			   		0 & 0  & 0 & \cdots & 0 &  \gamma_{n-3} &   0&   0& 1 \\
			   		 0 &   0&  0&0  & \cdots &  0  &    \gamma_{n-2} &   0& 0  \\
		\end{array}
	\right)
$$ 
so that the recurrence relation \eqref{rec rel 3sym} can be expressed as 
$$
	\mathbf{H}_n \left(\begin{array}{c} P_0(x) \\ P_1(x) \\ \vdots \\ P_{n-1}(x)\end{array}\right)
	= x \left(\begin{array}{c} P_0(x) \\ P_1(x) \\ \vdots \\ P_{n-1}(x)\end{array}\right)
	- P_n(x) \left(\begin{array}{c} 0 \\ 0\\ \vdots \\ 1\end{array}\right)
$$
and each zero of $P_n$ is an eigenvalue of the matrix $\mathbf{H}_n$. 
The spectral radius of the matrix $\mathbf{H}_n$, 
$$
	\rho(\mathbf{H}_n) = \max \{ |\lambda| : \ \lambda \text{ is an eigenvalue of } \mathbf{H}_n\}, 
$$ 
is bounded from above by  $|| \mathbf{H}_n ||$ where $|| \cdot ||$ denotes a matrix norm (see \cite[Section 5.6]{HJ}). 
We take the matrix norm 
$$
	|| \mathbf{H}_n ||_S =|| S^{-1}  \mathbf{H}_n S ||_{\infty} = \max_{1\leqslant i\leqslant n} \left\{ \sum_{j=1}^n \left|(S^{-1} \mathbf{H}_n S)_{i,j} \right| \right\}, 
$$
where $S$ corresponds to a non-singular matrix and $(S^{-1} \mathbf{H}_n S)_{i,j}$ denotes the $i$th row and $j$th column entry of the product matrix $S^{-1} \mathbf{H}_n S$. In particular if $S$ is an invertible diagonal matrix   $S = \text{diag}(d_1,\ldots, d_k, \ldots, d_{n})$, then 
$$\begin{multlined}
	|| \mathbf{H}_n ||_S \\
	= \max \left\{ \frac{d_2}{d_1}, \frac{d_3}{d_2},    \frac{d_4 + d_1 \gamma_1}{d_3}, \ldots, \frac{d_k + d_{k-3} \gamma_{k-3}}{d_{k-1}} , \ldots, 
	\frac{d_n + d_{n-3} \gamma_{n-3}}{d_{n-1}},  \frac{ d_{n-2} \gamma_{n-2}}{d_{n}} \right\}. 
\end{multlined}$$
Setting $d_k= d^{k} (k!)^{\alpha/3}\neq 0$, for some  positive constant $d$, gives 
$$ 
	|| \mathbf{H}_n ||_S \leqslant 2^{\alpha/3}\left(d+\frac{c}{d^2}\right) n^{\alpha/3} + o(n^{\alpha/3}) \quad \text{as }\ n\to +\infty. 
$$ 
The choice of $d=\left(2c\right)^{1/3}$ gives a minimum to $\left(d+\frac{c}{d^2}\right)$, so that  
$$ 
	|| \mathbf{H}_n ||_S \leqslant  \frac{3}{4^{1/3}} \left( c \ n^{\alpha} \right)^{1/3}+ o(n^{\alpha/3}) \quad \text{as }\ n\to +\infty, 
$$ 
which implies the result. 
\end{proof}

To summarize,  Proposition \ref{prop:interlace} combined with Lemma \ref{lem: 2sym components rec rel} allow us to conclude that each $P_n^{[j]}$ has exactly $n$ real zeros $\{x_{n,k}^{[j]}\}_{k=1}^n$ with $0<x_{n,1}^{[j]}<\ldots <x_{n,n}^{[j]}$. Moreover,  between two consecutive zeros of $P_n^{[j+2]}$ there is exactly one zero of $P_n^{[j]}$ and another of $P_n^{[j+1]}$: $x_{n,k}^{[j+2]}<x_{n,k+1}^{[j]}<x_{n,k+1}^{[j+1]}<x_{n,k+1}^{[j+2]}$. Taking into consideration the asymptotic behavior for the largest zero $|x_{n,n}|$ of $P_n$ described in Theorem \ref{thm: asym zero},  we then conclude that 
$$
	x_{n,k}^{[j]} \leqslant \frac{27}{4} c \ n^{\alpha} + o(n^{\alpha}). 
$$
Therefore, for each $j\in\{0,1,2\}$, all the zeros $x_{n,k}^{(j)}$ of $P_{3n+j}$ distinct from $0$ correspond to the cubic roots of $x_{n,k}^{[j]}>0$, with $k\in\{1,\ldots, n\}$, and they all lie on the starlike set $S$ and within the disc centred at the origin with radius $b= \frac{27}{4} c \ n^{\alpha} + o(n^\alpha)$. 

Multiple orthogonal polynomials on starlike sets received attention in recent years, under different frameworks. This includes the asymptotic behavior of polynomial sequences generated by recurrence relations of the type \eqref{rec rel 3sym} when further assumptions are taken regarding specific behaviour for the $\gamma$-coefficient \cite{AKS} or for certain type of the 2-orthogonality measures in \cite{DelLop} and \cite{LG11}. Faber polynomials associated with hypocycloidal domains and stars have also been studied in \cite{HeSaff}. 

Here, we describe all the threefold $2$-\OPS s that are classical in Hahn's sense, and our study includes the  representations for the measures supported on a set containing all the zeros of the polynomial sequence. From Theorem \ref{thm:3sym 2OPS} and Proposition \ref{prop:interlace}, the support lies on  a starlike set, that, according to  Theorem \ref{thm: asym zero} is bounded if the $\gamma$-coefficients are bounded, and unbounded otherwise. 


\section{Threefold symmetric Hahn-classical 2-\OPS }\label{sec:Hahn 2OPS} 

The classical orthogonal polynomial sequences of Hermite, Laguerre, Jacobi and Bessel collectively satisfy the so-called {\it Hahn property}:   
the sequence of its derivatives is again an orthogonal polynomial sequence. In the context of $2$-orthogonality, this algebraic property is portrayed as follows: 
\begin{definition} A monic 2-\OPS\ $\{ P_n\}_{n\geqslant 0}$ is "$2$-Hahn-classical"  when the sequence of its derivatives $\{ Q_n\}_{n\geqslant 0}$, with $Q_n(x) \coloneqq \frac{1}{n+1} P_{n+1}'(x)$ is also a $2$-\OPS. 
\end{definition}

The study of this type of $2$-orthogonal sequences was initiated in the works by  Douak and Maroni \cite{DM92}-\cite{DM97 II}. In those works (as well as in \cite{Mar99}) several properties of these polynomials were given, with the main pillars of the study being the structural properties, including the recurrence relations, satisfied by the polynomials. 
All in all, those studies encompassed the analysis of a nonlinear system of equations fulfilled by the recurrence coefficients. Douak and Maroni treated some special solutions to that system of equations, bringing to light several examples of these threefold symmetric "$2$-Hahn-classical" polynomials: see \cite{Douak96, DM97 I, DM97 II}. However, for those cases the support of the corresponding orthogonality measures that they found consisted of the positive real axis, which does not contain all the zeros. 
 Here, we base our analysis on the properties of the orthogonality measures and deduce the properties of the recurrence coefficients. We incorporate the works by Douak and Maroni and go beyond that by fully describing all the threefold "$2$-Hahn-classical" polynomials and bringing up explicitly the orthogonality measures along with the asymptotic behavior of the largest zero in absolute value as well as a Bochner type result for the polynomials (i.e., characterizing these polynomials via a third order differential equation). 

We start by observing that  the threefold symmetry of $\{ P_n\}_{n\geqslant 0}$ readily implies the threefold symmetry of $\{ Q_n(x) \coloneqq \frac{1}{n+1} P_{n+1}'(x)\}_{n\geqslant 0}$. This is a straightforward consequence of Definition \ref{def:3-fold}, as it suffices to take single differentiation of relation \eqref{sym}. Such property is valid for any polynomial sequence, regardless any orthogonality properties. 

Regarding threefold symmetric 2-\OPS s possessing Hahn's property, more can be said. The next result summarizes a characterization of the orthogonality measures along side with the  recurrence relations of the original sequence $\{P_n(x)\}_{n\geqslant 0}$ and the sequence of derivatives $\{Q_n(x)\coloneqq\frac{1}{n+1}P'_{n+1}(x)\}_{n\geqslant 0}$. This characterization can be found in the works \cite{DM92, DM95, Mar99}. Nonetheless, we revisit these results for a matter of completion while bringing different approaches to the original proofs, highlighting that all the structural properties for the polynomials can be derived from the corresponding 2-orthogonality measures. 

\begin{theorem}\label{thm: sym rc} Let $\{P_n\}_{n\geqslant 0}$ be a threefold symmetric 2-\OPS \ for $(u_0,u_1)$ satisfying the recurrence relation \eqref{rec rel 3sym} and let $\{Q_n(x)\coloneqq\frac{1}{n+1}P'_{n+1}(x)\}_{n\geqslant 0}$. The following statements are equivalent: 
 \renewcommand{\labelenumi}{\normalfont(\alph{enumi})}
\begin{enumerate}
\item $\{Q_n(x)\}_{n\geqslant 0}$ is a threefold symmetric 2-\OPS, satisfying the third-order recurrence relation 
\begin{equation}\label{Qn rec rel sym}
	Q_{n+1}(x) = x Q_n(x) -    \widetilde{\gamma}_{n-1} Q_{n-2}(x), 
\end{equation}
with initial conditions $Q_k(x)=x^k$ for $k\in\{0,1,2\}$.
\item The vector functional $(u_0,u_1)$ satisfies the matrix differential equation 
\begin{subequations}
 \begin{equation}\label{functional eq}
	\left( {\boldsymbol \Phi} \left[ \begin{array}{l}
	u_0\\
	u_1
	\end{array}\right]\right) '
	+ {{\boldsymbol \Psi}  \left[ \begin{array}{l}
	u_0\\
	u_1
	\end{array}\right]}
	= 
	\left[ \begin{array}{l}
	0\\
	0
	\end{array}\right],
\end{equation}
where 
\begin{equation}\label{Phi Psi}
\Phi = \left[ 
\begin{array}{c@{\qquad }c}
	 \vartheta_1   &   
   	  (1-\vartheta_1) x  \vspace{0.3cm} \\ 
    		 \dfrac{2}{\gamma_1}(1-\vartheta_2) \ x^2 &
   	   	   2\vartheta_2 -1\\ 
\end{array}
\right]
\quad \text{and}\quad 
	 {\boldsymbol \Psi} 
	= \left[ \begin{array}{cc}
	0 		& 1\\
	 \frac{2}{\gamma_1} x	& 0
	\end{array}\right]
\end{equation}
for some constants $\vartheta_1$ and $\vartheta_2$  such that
\begin{equation}\label{vartheta12}
	\vartheta_1,\vartheta_2 \neq  \frac{n-1}{n} , \quad \text{for all }\  n\geqslant 1. 
\end{equation}
\end{subequations}

\item there are coefficients $\vartheta_1,  \vartheta_2\neq  \frac{n-1}{n}, $ such that $\mathbf{U}=(u_0,u_1)$ satisfies 
 \begin{equation}
 	\label{diff eq u0}
	\Big(  \phi(x) u_0\Big)'' 
		+ \left(\frac{ 2 }{\gamma _1}(\vartheta_2 + \vartheta _1-2)x^2u_0\right)' 
	+ \frac{2}{\gamma _1} \left(\vartheta _1-2\right) x u_0
	=0  
 \end{equation}
 and 
\begin{subnumcases}{\label{u1 via u0}}
	\label{diff eq u1 via u0}
	\begin{multlined}
	\left(\vartheta _1-2\right) \left(2 \vartheta _2-1\right) u_1 \\
	=	\phi(x)u_0' 
			- \frac{2 }{\gamma _1} \left(\vartheta _1-1\right) \left(2 \vartheta _2-3\right)  x^2 u_0 , 
		\end{multlined}
			   &\text{if} $\quad \vartheta_1\neq2$,
	\\[0.3cm]
	\label{u1 2}
	x\, u_1^{\prime} = 2 u_0^{\prime} ,  
			& \text{if} $\quad \vartheta_1=2$, 
\end{subnumcases}
where 
\begin{equation}\label{phi}
\phi(x) =  \vartheta _1 \left(2 \vartheta _2-1\right)
		-\frac{2}{\gamma _1} \left(\vartheta _1-1\right) 
			\left(\vartheta _2-1\right) x^3 .
\end{equation}

\item  There exists a sequence of numbers $\{\widetilde{\gamma}_{n+1}\}_{n\geqslant 0}$ such that 
\begin{equation}\label{Pn to Qn}
	P_{n+3}(x) 	= \ Q_{n+3}(x) 
				+ \Big( (n+1) \gamma_{n+2} - (n+3)\widetilde{\gamma}_{n+1} \Big) Q_{n}(x),
\end{equation}
with initial conditions 
$
	\ P_0(x) = Q_0(x)=1  , \  
		P_1(x) = Q_1(x)  = x  \  \text{and} \  
	P_2(x) = Q_2(x) =x^2    . 
$\end{enumerate}
\end{theorem}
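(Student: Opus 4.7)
The plan is to close the loop via (a) $\Leftrightarrow$ (d) and (a) $\Leftrightarrow$ (b) $\Leftrightarrow$ (c). The backbone is the duality identity $v_m' = -(m+1)\, u_{m+1}$, valid for any polynomial sequence $\{P_n\}$ and its normalized derivative sequence $\{Q_n\}$, where $\{u_n\}$ and $\{v_n\}$ denote the respective dual sequences in $\mathcal{P}'$. This identity follows by pairing $v_m'$ with $P_{k+1}$ and using $P_{k+1}' = (k+1)Q_k$ together with $\langle v_m', P_0\rangle = 0$. A direct companion computation, using the threefold symmetric $P$-recurrence \eqref{rec rel 3sym}, yields $u_2 = \gamma_1^{-1}\, x\, u_0$, which will feed into the matrix formulation.

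For (a) $\Rightarrow$ (d): since $\deg Q_n = n$, the family $\{Q_n\}$ is a basis of $\mathcal{P}$, so expand $P_{n+3} = Q_{n+3} + \sum_{k=0}^{n+2} c_{n,k}\, Q_k$ with $c_{n,n+3}=1$. The threefold symmetry, inherited by $\{Q_n\}$ upon differentiating \eqref{sym}, forces $c_{n,k}=0$ unless $k\equiv n\pmod 3$. Under (a) one has $c_{n,k} = \langle v_k, P_{n+3}\rangle$; iterating the duality identity and invoking the 2-orthogonality of $\{P_n\}$ via the structural expansions of $u_{m+1}$ as a polynomial combination of $u_0, u_1$ kills all contributions with $k\leqslant n-3$, leaving $P_{n+3} = Q_{n+3} + c_n Q_n$. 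Matching coefficients of $x^{n+1}$ after substituting \eqref{rec rel 3sym} and \eqref{Qn rec rel sym} identifies $c_n = (n+1)\gamma_{n+2}-(n+3)\widetilde{\gamma}_{n+1}$. Conversely, differentiating \eqref{Pn to Qn} and combining with the $P$-recurrence produces \eqref{Qn rec rel sym}, so that the Shohat--Favard analogue for 2-orthogonality together with Proposition \ref{prop:3fold sym} gives (a).

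For (a) $\Rightarrow$ (b): under (a), the first two elements $v_0, v_1$ of the dual of $\{Q_n\}$ are themselves threefold symmetric. Coupled with the degree/parity constraints forced by $\langle v_j, Q_k\rangle = \delta_{j,k}$ and the $P$-recurrence, this forces
\[
	v_0 = \vartheta_1\, u_0 + (1-\vartheta_1)\, x\, u_1, \qquad
	v_1 = \tfrac{2}{\gamma_1}(1-\vartheta_2)\, x^2\, u_0 + (2\vartheta_2-1)\, u_1
\]
for constants $\vartheta_1, \vartheta_2$. Substituting into $v_0' = -u_1$ and $v_1' = -2u_2 = -\tfrac{2x}{\gamma_1}\, u_0$ yields exactly the matrix system \eqref{functional eq}--\eqref{Phi Psi}; the restriction \eqref{vartheta12} comes from enforcing $\widetilde{\gamma}_n \neq 0$ in \eqref{Pn to Qn}. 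Conversely, (b) $\Rightarrow$ (a) is obtained by reading $v_0, v_1$ off $\Phi$, checking via the duality identity that they are dual to a threefold symmetric sequence satisfying a recurrence of the form \eqref{Qn rec rel sym}, and concluding with the Shohat--Favard analogue.

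Finally, (b) $\Leftrightarrow$ (c) is a matter of eliminating $u_1$ from \eqref{functional eq}. Expanding the first scalar row gives $\vartheta_1 u_0' + (2-\vartheta_1)u_1 + (1-\vartheta_1)x u_1' = 0$; when $\vartheta_1 \neq 2$ one solves for $u_1$ in terms of $u_0$ and $u_0'$, combines with the second row to cancel $u_1'$, and arrives at \eqref{diff eq u0} and \eqref{diff eq u1 via u0} with $\phi$ as in \eqref{phi}. When $\vartheta_1 = 2$, the first row collapses directly to \eqref{u1 2}. The main technical obstacle is the truncation step in (a) $\Rightarrow$ (d): showing that $c_{n,k}$ vanishes for all $k \leqslant n-3$ requires a careful inductive interplay between the duality identity and the 2-orthogonality of $\{P_n\}$, and it is precisely at this step that the Hahn property does genuine work.
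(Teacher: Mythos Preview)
Your overall scheme (cycle through the four statements using the duality identity $v_m'=-(m+1)u_{m+1}$, the structural formulas for $u_2,u_3,u_4$, and elimination of $u_1$ for (b)$\Leftrightarrow$(c)) is exactly the paper's, and the steps (b)$\Leftrightarrow$(c) and the derivation of \eqref{functional eq} from $v_0'=-u_1$, $v_1'=-2u_2$ are the same. Two points deserve correction.

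\emph{First, (a)$\Rightarrow$(d) is a one--line computation, not an obstacle.} Under (a), differentiate the $P$--recurrence $P_{n+4}=xP_{n+3}-\gamma_{n+2}P_{n+1}$ and use $P_{k+1}'=(k+1)Q_k$ to get
\[
(n+4)Q_{n+3}=P_{n+3}+(n+3)\,xQ_{n+2}-(n+1)\gamma_{n+2}Q_n .
\]
Now replace $xQ_{n+2}$ via the $Q$--recurrence \eqref{Qn rec rel sym}, i.e.\ $xQ_{n+2}=Q_{n+3}+\widetilde\gamma_{n+1}Q_n$, and solve for $P_{n+3}$; this gives \eqref{Pn to Qn} directly. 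Your basis--expansion/truncation route, with the duality identity used to kill $c_{n,k}$ for $k\le n-3$, is unnecessary and your sketch of it is not a proof: $\langle v_k,P_{n+3}\rangle$ does not reduce to zero just from ``structural expansions of $u_{m+1}$'' without essentially redoing the differentiation argument. What you flag as the ``main technical obstacle'' is a step the paper bypasses entirely.

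\emph{Second, your (a)$\Rightarrow$(b) is under--justified.} You assert that threefold symmetry together with $\langle v_j,Q_k\rangle=\delta_{j,k}$ forces $v_0=\vartheta_1 u_0+(1-\vartheta_1)x\,u_1$ and the analogous form for $v_1$, but symmetry and normalisation alone do not pin down a \emph{finite} polynomial representation of $v_j$ in terms of $u_0,u_1$. The paper obtains this concretely by first using (d) (for $n=0,1$) to get $v_0=u_0+(\gamma_2-3\widetilde\gamma_1)u_3$ and $v_1=u_1+(2\gamma_3-4\widetilde\gamma_2)u_4$, and then invoking the explicit formulas $u_3=-\gamma_2^{-1}u_0+\gamma_2^{-1}x\,u_1$ and $u_4=(\gamma_1\gamma_3)^{-1}x^2u_0-\gamma_3^{-1}u_1$ coming from the 2-OPS dual--sequence structure. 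That is the step you should spell out; once you do, the parameters $\vartheta_1=3\widetilde\gamma_1/\gamma_2$ and $\vartheta_2=2\widetilde\gamma_2/\gamma_3$ appear naturally and the moment recursion derived from \eqref{functional eq} forces \eqref{vartheta12}. Your derivation of \eqref{vartheta12} from ``$\widetilde\gamma_n\neq 0$'' is the right intuition but not the actual mechanism.
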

\begin{proof}  We prove that (a) $\Rightarrow$ (d) $\Rightarrow$ (b) $\Leftrightarrow$ (c) $\Rightarrow$ (a). 

In order to see (a) implies (d), we differentiate the recurrence relation satisfied by $P_n$ to then replace the differentiated terms by its definition of $Q_n$. A substitution of the term $x Q_n(x)$ by the expression provided in \eqref{Qn rec rel sym} finally gives \eqref{Pn to Qn}. 

Now we prove that (d) implies (b). 
Any linear functional $w$ in $\mathcal{P}^\prime$ can be written as 
$$ 
	w = \sum_{k=0}^{\infty} \langle w, P_k \rangle u_k. 
$$ 
Based on this, the relation $Q_n(x)\coloneqq\frac{1}{n+1}P_{n+1}'(x)$ gives a differential relation between the dual sequence $\{v_n\}_{n\geqslant 0}$ of  $\{Q_n\}_{n\geqslant 0}$ in terms of  $\{u_n\}_{n\geqslant 0}$ (the dual sequence of  $\{P_n\}_{n\geqslant 0}$), which is 
\begin{equation}\label{vn prime}
	 v_n' = -(n+1) u_{n+1}, \qquad n\geqslant 0,  \ 
\end{equation}
whilst \eqref{Pn to Qn} leads to 
\begin{equation}\label{vn to un}
	v_n = u_n + \left( (n+1) \gamma_{n+2}-(n+3) \widetilde{\gamma}_{n+1}\right) u_{n+3} , \qquad n\geqslant 0.
\end{equation}
The choice of $n=0$ and $n=1$ in both \eqref{vn prime} and \eqref{vn to un} respectively gives
\begin{equation} \label{v0 v1 to u0 to u4}
	\left[ \begin{array}{l}
	v_0'\\
	v_1'
	\end{array}\right]
	= - 
	\left[ \begin{array}{c}
	u_1\\
	2 \, u_2
	\end{array}\right]
	\quad \text{and}\quad 
	\left[ \begin{array}{l}
	v_0\\
	v_1
	\end{array}\right]
	=  
	\left[ \begin{array}{c}
	u_0 + \left(  \gamma_{2}-3 \widetilde{\gamma}_{1}\right) u_{3}\\
	u_1 + \left( 2 \gamma_{3}-4 \widetilde{\gamma}_{2}\right) u_{4}
	\end{array}\right].
\end{equation}
As explained in \cite{Mar99} (and, alternatively, in \cite{DM92}, \cite{DM95} and \cite{Mar89}), the elements of the dual sequence $\{u_n\}_{n\geqslant0}$  of the 2-\OPS\ $\{ P_n\}_{n\geqslant 0}$  can be written as 
$$ 
\begin{array}{l}
	u_{2n} 	= E_n (x) u_0 + a_{n-1}(x) u_1 ,\\ 
	u_{2n+1}  = b_n (x) u_0 + F_{n}(x) u_1 ,
\end{array}
$$ 
where $E_n$ and ${F}_n$ are polynomials of degree $n$, while $a_n$ and ${b}_n$ are polynomials of degree less than or equal to $n$ under the assumption that $a_{-1}(x)=0$, $E_{0}=1$, $F_{0}=1$ and $b_0=0$. The recurrence relations \eqref{rec rel 3sym}  fulfilled by  $\{ P_n\}_{n\geqslant 0}$ yield (see \cite[Lemma 6.1]{Mar99})
\begin{alignat*}{3}
	&\gamma_{2n+2} F_{n+1} (x)- x F_n(x) = -a_{n-1}(x), \qquad 
	&& \gamma_{2n+3} a_{n+1} (x)- x a_n(x) = -F_{n}(x),  \\
	&\gamma_{2n+2} b_{n+1} (x)- x b_n(x) = -E_{n}(x)  \quad\text{and} \quad 
	&& \gamma_{2n+3} E_{n+2} (x)- x E_{n+1}(x) = -b_{n}(x),  
\end{alignat*}
with initial conditions $a_0 (x) =0$ and  
$E_1(x) = \frac{1}{\gamma_1} x$. In particular, we obtain: 
\begin{align*}
	b_1(x)
		= - \frac{1}{\gamma_2}, 
	\quad
	F_1(x) = \frac{1}{\gamma_2}x, \quad
	E_2(x) = \frac{1}{\gamma_1\gamma_3}  x^2 , \quad\text{and}\quad
	a_1(x) = -\frac{1}{\gamma_3}, 
\end{align*}
so that 
\begin{equation}\label{u2 u3 u4}
\begin{multlined}
	u_2 = \frac{1}{\gamma_1} x \, u_0  , \quad 
	u_3 = - \frac{1}{\gamma_2} u_0 + \frac{1}{\gamma_2}x \, u_1  
	\quad \text{and}\quad 
	u_4 = \frac{1}{\gamma_1\gamma_3}  x^2 \, u_0 -\frac{1}{\gamma_3} \, u_1. 
\end{multlined} 
\end{equation}
Consequently, the first identity in \eqref{v0 v1 to u0 to u4} reads as 
\begin{subequations}
\begin{equation}\label{DV Psi U exp}
	\left[ \begin{array}{l}
	v_0'\\
	v_1'
	\end{array}\right]
	= - 
	{\boldsymbol \Psi}
	\left[ \begin{array}{l}
	u_0\\
	u_1
	\end{array}\right]
	\quad \text{with }\quad
	 {\boldsymbol \Psi} = \left[ \begin{array}{ll}
	0 		& 1\\
	\frac{2}{{\gamma}_1} x 	&  0
	\end{array}\right]. 
\end{equation}
Using \eqref{u2 u3 u4} in the second identity in \eqref{v0 v1 to u0 to u4} leads to 
\begin{equation}\label{V Phi U}
	\left[ \begin{array}{l}
	v_0\\
	v_1
	\end{array}\right]
	={\boldsymbol \Phi} \left[ \begin{array}{l}
	u_0\\
	u_1
	\end{array}\right]
	\quad \text{with } \quad 
	 {\boldsymbol \Phi} 		
	= \left[ \begin{array}{ll}
		\phi_{0,0} &  \phi_{0,1}\\
		\phi_{1,0} &  \phi_{1,1}
	\end{array}\right],
 \end{equation}
\end{subequations}
where ({\it c.f.} \cite[Eq. (6.17)]{Mar99})
\begin{eqnarray*}
	&& \phi_{00}(x) 
			= \frac{3\widetilde{\gamma}_1}{\gamma_2} , 
	\quad  \phi_{01}(x) 
			=  \left(1 -   \frac{3\widetilde{\gamma}_1}{\gamma_2}\right) x , \\
	&& \phi_{10}(x) 
			=   2\left(\frac{1}{\gamma_1}-  \frac{2\widetilde{\gamma}_2}{\gamma_3}\right) x^2 
	\quad \text{and}\quad
	\phi_{11}(x) 
			= -1+  \frac{4\widetilde{\gamma}_2}{\gamma_3}. 
\end{eqnarray*}
We set $\vartheta_1=\frac{3\widetilde{\gamma}_1}{\gamma_2}\neq 0$ and $\vartheta_2=\frac{2\widetilde{\gamma}_2}{\gamma_3}\neq 0$, and now \eqref{functional eq}-\eqref{Phi Psi} follows after differentiating both sides of the equation in \eqref{V Phi U} and then compare with \eqref{DV Psi U exp}, which is a system of two functional equations in $(u_0,u_1)$:
\begin{equation}\label{pf functional eqs}
	\begin{cases}
		\vartheta_1 u_0^{\prime} + (1- \vartheta_1) x\, u_1^{\prime} + (2-\vartheta_1) u_1 = 0 \\
		\frac{2}{\gamma_1} (1-\vartheta_2) x^2\, u_0^{\prime} + \frac{2}{\gamma_1} (3-2\vartheta_2) x\, u_0
			- (1-2\vartheta_2)u_1^{\prime}=0. 
	\end{cases}
\end{equation}
The action of each of these equations over the monomials gives 
$$ 
	\begin{cases}
		 n \vartheta_1 (u_0)_{n-1} + \left( (n-1) - n \vartheta_1 \right) (u_1)_{n} = 0, \\
		 \dfrac{1}{\gamma_1} \left( (n-1) - n \vartheta_2 \right)  (u_0)_{n+1} 
			+ (2\vartheta_2 -1)n (u_1)_{n-1} = 0 , \qquad  n\geqslant 0,
	\end{cases}
$$ 
under the assumption that $(u_0)_{-1}=(u_1)_{-1}=0$. All the moments are well defined provided that 
$$ 
	\left( (n-1) - n \vartheta_1 \right) \left( (n-1) - n \vartheta_2 \right) \neq 0 , \qquad n\geqslant 0, 
$$ 
which corresponds to condition \eqref{vartheta12}. Under this assumption, the threefold symmetry implies that  
$$ 
	(u_0)_n = (u_1)_{n+1} =0 \quad \text{for} \quad n \not\equiv 0 \bmod 3, 
$$ 
whilst for $n\equiv 0 \bmod 3$, we have $(u_0)_{n} \ (u_1)_{n+1} \neq 0$, and these are recursively defined by 
\begin{align*}
&	\frac{2}{\gamma_1} \left( n+1 -( n+2) \vartheta_2 \right) \left( n -( n+1) \vartheta_1 \right) (u_0)_{n+3} 
	= \vartheta_1(2 \vartheta_2-1)(n+1)(n+2) (u_0)_{n}\ , \\
& 	 \left( n+1 -( n+2) \vartheta_2 \right) \left( n+3 -( n+4) \vartheta_1 \right)(u_1)_{n+4}  
		= \vartheta_1(2\vartheta_2 -1)(n+2)(n+4)   (u_1)_{n+1} \ ,
\end{align*}
where  $(u_0)_0=(u_1)_1=1$. 

Concerning the proof of  (b) $\Rightarrow $ (c), we start by noting that the system of equations  \eqref{functional eq} can be written as in \eqref{pf functional eqs} which reads as 
\begin{equation}\label{C u1 D u0}
	C \left[ \begin{array}{l}
	u_1'\\
	u_1
	\end{array}\right]
	= -D \left[ \begin{array}{l}
	u_0'\\
	u_0
	\end{array}\right],
\end{equation}
with  
$$
	C= \left[ \begin{array}{cc}
		\left(1-\vartheta _1\right) x &  2-\vartheta _1\\\
		2\vartheta _2-1 \ &  0
	\end{array}\right]
	\qquad \text{and } \qquad 
	D= \left[ \begin{array}{cc}
		\vartheta_1 &  0 \\
		\frac{2}{\gamma_1} (1-\vartheta_2)x^2 & \frac{2}{\gamma_1} (3-2\vartheta_2)x
	\end{array}\right]. 
$$
Observe that $\det C = -(2-\vartheta _1)(2\vartheta _2-1)$ is constant. The matrix $C$ is nonsingular if $\vartheta_1\neq 2$. For this case, a left multiplication by its adjoint matrix $C^{*}$ gives 
\begin{subnumcases}{\label{pf u1 u1prime}}
	\label{pf u1 prime}
	\begin{multlined}
	\left(\theta _1-2\right) \left(2 \theta _2-1\right) u_1' \\
	=  -  \frac{2}{\gamma_1}\left(\vartheta _1-2\right) \left(1-  \vartheta _2\right)x^2 u_0 ' + \frac{2}{\gamma _1} \left(\vartheta _1-2\right) \left(2 \vartheta _2-3\right) x u_0 , 
		\end{multlined}&
	   \\[0.3cm]
	\left(\theta _1-2\right) \left(2 \theta _2-1\right)  u_1 
	= \phi(x) u_0' - \frac{2}{\gamma _1} \left(\vartheta _1-1\right) \left(2 \vartheta _2-3\right) x^2 u_0 \label{pf u1} , & 
\end{subnumcases}
where $\phi(x)=\det \Phi$ which is given in \eqref{phi}. 
We differentiate \eqref{pf u1} once and compare with \eqref{pf u1 prime} to obtain \eqref{diff eq u0} and \eqref{diff eq u1 via u0}. 

When $\vartheta_1=2$, the functional equations \eqref{pf functional eqs} (or equivalently \eqref{C u1 D u0}) become 
$$ 
\begin{cases}
		2 u_0^{\prime} = x\, u_1^{\prime} , \\
		\frac{2}{\gamma_1} (1-\vartheta_2) x^2\, u_0^{\prime} + \frac{2}{\gamma_1} (3-2\vartheta_2) x\, u_0
			- (1-2\vartheta_2)u_1^{\prime}=0 ,
	\end{cases}
$$ 
and this gives \eqref{diff eq u0} and \eqref{u1 2}.

The implication (c) $\Rightarrow $ (b) can be trivially obtained by performing reciprocal operations to the ones just described. 

The proof of (b) $\Rightarrow$ (a) consists of showing that $\{ Q_n \}_{n\geqslant 0}$ is 2-orthogonal for $(v_0,v_1)$ given by \eqref{V Phi U} based on  \eqref{functional eq}-\eqref{Phi Psi} together with the 2-orthogonality of $\{ P_n \}_{n\geqslant 0}$ with respect to $(u_0,u_1)$ and on account of conditions \eqref{vartheta12}. We will omit the details which can be followed in \cite[Proposition 6.2, p.324]{Mar99}, with the obvious adaptations to the threefold symmetric case. 
\end{proof}

The vector functional $\mathbf{U}=(u_0,u_1)$ satisfies a matrix equation of Pearson type \eqref{functional eq}. This somehow mimics the properties of the  classical orthogonal polynomials. However, we refrain ourselves from calling such polynomials $2$-orthogonal classical, since other characteristic properties of the classical orthogonal polynomials can be interpreted in the context of $2$-orthogonality (or multiple orthogonality) and give rise to completely different sets of polynomials.

As a straightforward consequence of statement (c) in the latter result, we derive equations for the moments of the {\it Hahn-classical} vector linear functional $(u_0,u_1)$: 
\begin{corollary} Under the same assumptions of Theorem \ref{thm: sym rc},   the  moments of $u_0$ and $u_1$ satisfy 
\begin{equation}\label{Sym moment eq u0}
\begin{cases}
\begin{multlined}
	\frac{2}{\gamma_1}\Big[ (3n+2)\Big( (3n+1) \left(\vartheta _1-1\right) \left(\vartheta _2-1\right) 
	 -  (\vartheta_2 + \vartheta _1-2)   \Big)
	 -  \left(\vartheta _1-2\right)   \Big](u_0)_{3n+3} \\[-0.2cm]
	= (3n+2)(3n+1)\vartheta _1 \left(2 \vartheta _2-1\right) (u_0)_{3n} \ , \\
\end{multlined} \\
	(u_0)_{3n+1} = (u_0)_{3n+2} =0 
		\quad \text{and } \quad 
	(u_0)_0 =1 ,
	\end{cases}
\end{equation}
while 
\begin{equation}\label{Sym moment eq u1}
\begin{cases}
\begin{multlined}
	( u_1)_{3n+1} = \frac{2 \left(\vartheta _1-1\right) \left( (3n+2) \vartheta _2- (3n+1) \right) }
		{\gamma _1\left(\vartheta _1-2\right)  \left(2 \vartheta _2-1\right) }
		(u_0)_{3n+3}\\[-0.3cm]
		-  (3n+1)   \vartheta _1 \left(2 \vartheta _2-1\right) (u_0)_{3n}, \\
	\end{multlined}
	& \text{if} \quad \vartheta_1 \neq 2\\
	( u_1)_{3n+1} = \frac{2(3n+1)}{3n+2} (u_0)_{3n}	& \text{if} \quad \vartheta_1 = 2
	\\[0.4cm]
	(u_1)_{3n} = (u_1)_{3n+2} =0 
		\quad \text{and } \quad 
	(u_1)_1 =1 .
\end{cases}
\end{equation}
\end{corollary}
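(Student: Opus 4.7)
The corollary is a direct consequence of translating each of the functional differential equations in statement (c) of Theorem~\ref{thm: sym rc} into a moment recursion via the duality rules \eqref{properties functionals}. The overall plan is: test each identity against a carefully chosen monomial $x^k$, move all derivatives off the functionals by iterating $\langle \Lf', f\rangle = -\langle \Lf, f'\rangle$, and invoke the threefold symmetry \eqref{sym u0 u1}, which kills $(u_0)_n$ unless $n\equiv 0\pmod 3$ and $(u_1)_n$ unless $n\equiv 1\pmod 3$. After this clean-up, only two surviving moments remain in each identity, which will be exactly those displayed.

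For \eqref{Sym moment eq u0} I would test \eqref{diff eq u0} against $x^{3n+2}$. The three summands give, respectively, $(3n+2)(3n+1)\langle u_0, \phi(x)\,x^{3n}\rangle$ from the second-derivative term, $-(3n+2)\tfrac{2}{\gamma_1}(\vartheta_2+\vartheta_1-2)(u_0)_{3n+3}$ from the first-derivative term, and $\tfrac{2}{\gamma_1}(\vartheta_1-2)(u_0)_{3n+3}$ from the zero-order term. Expanding $\phi$ from \eqref{phi} splits the first contribution into a multiple of $(u_0)_{3n}$ plus a further multiple of $(u_0)_{3n+3}$ (all other degrees are zero by symmetry), and collecting coefficients yields the displayed two-term recurrence. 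The side conditions $(u_0)_{3n+1}=(u_0)_{3n+2}=0$ and $(u_0)_0=1$ come immediately from \eqref{sym u0 u1} together with $\langle u_0,P_0\rangle = 1$.

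For \eqref{Sym moment eq u1} the derivation splits along the dichotomy \eqref{u1 via u0}. When $\vartheta_1\neq 2$, testing \eqref{diff eq u1 via u0} against $x^{3n+1}$ produces, on the left, $(\vartheta_1-2)(2\vartheta_2-1)(u_1)_{3n+1}$, and on the right, after rewriting $\langle \phi u_0',\,x^{3n+1}\rangle = -\langle u_0,(\phi(x)x^{3n+1})'\rangle$ and using $\phi'(x) = -\tfrac{6}{\gamma_1}(\vartheta_1-1)(\vartheta_2-1)x^2$, a linear combination of $(u_0)_{3n+3}$ and $(u_0)_{3n}$. The coefficient of $(u_0)_{3n+3}$ then collapses via the algebraic identity $(3n+4)(\vartheta_2-1)-(2\vartheta_2-3) = (3n+2)\vartheta_2-(3n+1)$, reproducing the stated expression. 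When $\vartheta_1=2$, testing \eqref{u1 2} against $x^{3n+1}$ gives at once $-(3n+2)(u_1)_{3n+1} = -2(3n+1)(u_0)_{3n}$, which is the second branch.

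No genuine obstacle arises; the whole derivation is a mechanical transfer from functional to moment form. The only care needed is consistent sign and degree bookkeeping in the duality transfer, and, especially in the $u_1$ case, patient collection of the two $(u_0)$-coefficients after differentiating the cubic $\phi(x)x^{3n+1}$. Once this algebraic simplification is performed the two systems \eqref{Sym moment eq u0} and \eqref{Sym moment eq u1} drop out directly, and the vanishing and normalization statements are immediate from \eqref{sym u0 u1} and the duality $\langle u_j, P_k\rangle = \delta_{j,k}$.
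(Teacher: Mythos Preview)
Your proposal is correct and follows essentially the same route as the paper's own proof: both act with the functional equations from statement~(c) of Theorem~\ref{thm: sym rc} on monomials via the duality rules \eqref{properties functionals} and then invoke the threefold symmetry \eqref{sym u0 u1} to isolate the surviving moments. Your write-up is in fact more explicit than the paper's, which merely records the pairing $\langle\,\cdot\,,x^n\rangle$ and asserts the outcome; your choice of the specific test exponents $x^{3n+2}$ and $x^{3n+1}$, and the algebraic collapse $(3n+4)(\vartheta_2-1)-(2\vartheta_2-3)=(3n+2)\vartheta_2-(3n+1)$, are exactly the bookkeeping steps the paper leaves implicit.
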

\begin{proof} 
From \eqref{diff eq u0}-\eqref{u1 via u0} it follows 
$$ 
	\langle	\Big(  \phi(x) u_0\Big)'' 
		+ \left(\frac{ 2 }{\gamma _1}(\vartheta_2 + \vartheta _1-2)x^2u_0\right)' 
	+ \frac{2}{\gamma _1} \left(\vartheta _1-2\right) x u_0 ,  x^n \rangle
	=0  , \qquad n\geqslant 0, 
$$ 
 and 
$$ \begin{cases} 
	\langle \left(\vartheta _1-2\right) \left(2 \vartheta _2-1\right) u_1 ,  x^n \rangle
	=	\langle \phi(x)u_0^{\, \prime} 
			- \frac{2 }{\gamma _1} \left(\vartheta _1-1\right) \left(2 \vartheta _2-3\right)  x^2 u_0,  x^n \rangle , 
			   &  \text{if} \  \vartheta_1\neq2,
	\\
	  x\, u_1^{\, \prime} = 2  \langle  u_0^{\, \prime} ,  x^n \rangle,  
			& \text{if} \  \vartheta_1=2, 
\end{cases}$$ 
which, on account of \eqref{properties functionals}, leads to \eqref{Sym moment eq u0}.

Similarly, by taking into account the operations defined in \eqref{pf u1 action}, we deduce that  relations \eqref{u1 via u0} imply
 \eqref{Sym moment eq u1}.
\end{proof}

The "$2$-Hahn-classical" polynomials satisfy a third order recurrence relation \eqref{rec rel 2OPS} and the  $\gamma$-recurrence coefficients have a specific rational structure. 
Here, we show that such expression for the $\gamma$-coefficients actually characterizes the  threefold symmetric "$2$-Hahn-classical" polynomials, by proving the reciprocal condition found in the work by Douak and Maroni  \cite{DM92}. For a matter of completion, we obtain the $\gamma$-coefficients directly from the functional equations \eqref{functional eq}, rather than from algebraic manipulations on the recurrence relations. 

\begin{theorem}\label{th: gammas} Let $\{ P_n \}_{n\geqslant 0}$ be a monic 2-\OPS. Then  $\{ P_n \}_{n\geqslant 0}$ satisfies  \eqref{rec rel 3sym} with 
 \begin{eqnarray}\label{sym gammas}
	\gamma_{n+2}
	= \frac{n+3}{n+1}\frac{  n(\vartheta_n -1) +1  }{ (n+4)(\vartheta_{n+1} -1) +1 }\gamma_{n+1} , 
\end{eqnarray}
where 
\begin{equation}\label{vartheta gen exp}
	\vartheta_n = \left(\frac{1-(-1)^n}{2}\right) \frac{1-\frac{n+1}{2} (1-\vartheta_1)}{1-\frac{n-1}{2} (1-\vartheta_1)}
			+\left( \frac{1+(-1)^n}{2} \right)\frac{1-\frac{n}{2} (1-\vartheta_2)}{1-(\frac{n}{2}-1) (1-\vartheta_2)} ,
			\qquad n\geqslant 1,
\end{equation}
with $\vartheta_1,\vartheta_2$ subject to \eqref{vartheta12}, 
if and only if  the sequence $\{ Q_n(x)\coloneqq \frac{1}{n+1} P_{n+1}'(x) \}_{n\geqslant 0}$ is $2$-orthogonal satisfying the recurrence relation \eqref{Qn rec rel sym} with 
\begin{equation}\label{gamma tilde}
	\widetilde{\gamma}_{n} 
	= \frac{n}{n+2} \vartheta_n \gamma_{n+1}, \quad \text{for}\quad n\geqslant 1. 
\end{equation}
\end{theorem}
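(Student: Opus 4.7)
The plan is to invoke the equivalence of (a) and (d) in Theorem \ref{thm: sym rc}, which recasts the Hahn property as the existence of a sequence $\{\widetilde{\gamma}_n\}_{n\geqslant 1}$ together with the decomposition
\[
P_{n+3}(x) = Q_{n+3}(x) + c_n Q_n(x), \qquad c_n := (n+1)\gamma_{n+2} - (n+3)\widetilde{\gamma}_{n+1},
\]
and then to extract both the recursion for $\vartheta_n$ and the explicit form of $\gamma_{n+2}/\gamma_{n+1}$ purely from consistency with the two recurrence relations \eqref{rec rel 3sym} and \eqref{Qn rec rel sym}, bypassing the functional-equation route altogether.

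First, I would substitute $P_{n+3} = Q_{n+3}+c_n Q_n$ and $P_{n+1}=Q_{n+1}+c_{n-2}Q_{n-2}$ into $P_{n+4} = xP_{n+3}-\gamma_{n+2}P_{n+1}$, and use $xQ_{n+3}=Q_{n+4}+\widetilde{\gamma}_{n+2}Q_{n+1}$ together with $xQ_n = Q_{n+1}+\widetilde{\gamma}_{n-1}Q_{n-2}$ to rewrite everything in the basis $\{Q_k\}_{k\geqslant 0}$. Matching the coefficients of $Q_{n+1}$ and of $Q_{n-2}$ respectively produces the two compatibility conditions
\[
c_{n+1} - c_n = \widetilde{\gamma}_{n+2} - \gamma_{n+2}, \qquad c_n\, \widetilde{\gamma}_{n-1} = \gamma_{n+2}\, c_{n-2}.
\]
Next I would impose the ansatz $\widetilde{\gamma}_n = \frac{n}{n+2}\vartheta_n\gamma_{n+1}$, which collapses $c_n$ into the compact form $c_n = (n+1)(1-\vartheta_{n+1})\gamma_{n+2}$. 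Inserting this into the second compatibility condition causes the $\gamma$'s to cancel out entirely and leaves the purely algebraic identity $(1-\vartheta_{n+1})\vartheta_{n-1} = 1-\vartheta_{n-1}$, i.e., the two-step linear-fractional recursion
\[
\vartheta_{n+1} = 2 - \frac{1}{\vartheta_{n-1}}.
\]
Setting $\psi_n := \vartheta_n - 1$ linearises this to $1/\psi_{n+1} = 1/\psi_{n-1} + 1$, which telescopes separately along odd and even indices and, starting from $\psi_1 = \vartheta_1-1$ and $\psi_2 = \vartheta_2-1$, reproduces exactly the closed form \eqref{vartheta gen exp}; the nondegeneracy assumption \eqref{vartheta12} is precisely what guarantees that no denominator in the telescoping vanishes. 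Substituting $c_n$ and $\widetilde{\gamma}_{n+2} = \frac{n+2}{n+4}\vartheta_{n+2}\gamma_{n+3}$ into the first compatibility condition then yields a linear two-term equation between $\gamma_{n+3}$ and $\gamma_{n+2}$ which factors directly into \eqref{sym gammas}.

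For the converse I would run the same computation in reverse: starting from the explicit forms \eqref{sym gammas} and \eqref{gamma tilde} with $\vartheta_n$ given by \eqref{vartheta gen exp}, a direct substitution confirms that both compatibility relations hold, so the coefficients $c_n = (n+1)\gamma_{n+2}-(n+3)\widetilde{\gamma}_{n+1}$ realise a valid decomposition of the type \eqref{Pn to Qn}, and the equivalence (d) $\Leftrightarrow$ (a) of Theorem \ref{thm: sym rc} then delivers the 2-orthogonality of $\{Q_n\}_{n\geqslant 0}$ with the stated $\widetilde{\gamma}_n$. The main source of difficulty is not conceptual but bookkeeping: solving the decoupled two-step fractional recursion for $\vartheta_n$ and reconciling the separate odd-index and even-index closed forms with the single piecewise expression \eqref{vartheta gen exp}, while simultaneously verifying that \eqref{vartheta12} matches exactly the well-definedness condition at each step.
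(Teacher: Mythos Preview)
Your route through the compatibility conditions is genuinely different from the paper's, and for the backward direction (assume $\{Q_n\}$ is $2$-orthogonal with $\widetilde{\gamma}_n=\frac{n}{n+2}\vartheta_n\gamma_{n+1}$, derive \eqref{sym gammas}) it is correct and considerably cleaner than what the paper does. The paper instead leans on the functional equation \eqref{functional eq}: it computes $\langle v_0,x^nQ_{2n}\rangle$ and $\langle v_1,x^nQ_{2n+1}\rangle$ by pushing derivatives onto the functionals via \eqref{DV Psi U exp}--\eqref{V Phi U}, then reads off \eqref{gamma tilde} and \eqref{sym gammas} from the resulting ratios, observing along the way that $\vartheta_n$ solves the Riccati equation \eqref{Ricati eq}. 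Your derivation recovers the same Riccati equation and the same $\gamma$-recursion purely from the two identities $c_{n+1}-c_n=\widetilde{\gamma}_{n+2}-\gamma_{n+2}$ and $c_n\widetilde{\gamma}_{n-1}=\gamma_{n+2}c_{n-2}$, which is a nice elementary substitute for the moment computation.

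Your converse, however, has a circularity. You propose to verify that the two compatibility relations hold and conclude that ``the coefficients $c_n$ realise a valid decomposition of the type \eqref{Pn to Qn}''. But condition (d) in Theorem~\ref{thm: sym rc} is a statement about the \emph{actual derivatives} $Q_n=\frac{1}{n+1}P_{n+1}'$, whereas your compatibility relations were extracted under the assumption that $Q_n$ already obeys \eqref{Qn rec rel sym}. In this direction you do not know that yet --- it is precisely what you are trying to prove. Verifying the compatibility identities only tells you that if you \emph{define} an auxiliary sequence $\widetilde{Q}_n$ by the recurrence \eqref{Qn rec rel sym} with the chosen $\widetilde{\gamma}_n$, then $P_{n+3}=\widetilde{Q}_{n+3}+c_n\widetilde{Q}_n$ holds by induction; it does not yet say $\widetilde{Q}_n=Q_n$. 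That missing identification is exactly the content of the paper's long induction on the coefficients $\xi_{n,k}$ in the expansion $xQ_n=\sum_k\xi_{n+1,k}Q_k$, which shows directly that all $\xi_{n,k}$ with $k\leqslant n-4$ vanish.

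Your framework can be repaired without reverting to the paper's argument: once you have $P_{n+3}=\widetilde{Q}_{n+3}+c_n\widetilde{Q}_n$ by induction, combine it with the structural relation $(n+4)Q_{n+3}=P_{n+3}+(n+3)xQ_{n+2}-(n+1)\gamma_{n+2}Q_n$ (obtained by differentiating \eqref{rec rel 3sym}). Assuming inductively that $\widetilde{Q}_k=Q_k$ for $k\leqslant n+2$, substitute $xQ_{n+2}=x\widetilde{Q}_{n+2}=\widetilde{Q}_{n+3}+\widetilde{\gamma}_{n+1}Q_n$ and compare; the result is $(n+4)Q_{n+3}=(n+4)\widetilde{Q}_{n+3}$, closing the induction. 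With this extra step your argument becomes a complete and more streamlined alternative to the paper's coefficient-chasing proof of the converse.
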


\begin{proof} We start by proving that if $\{ Q_n \}_{n\geqslant 0}$ satisfies the recurrence relation  \eqref{Qn rec rel sym} with the $\widetilde{\gamma}$-coefficients given by \eqref{gamma tilde}, then the $\gamma$-coefficients in the recurrence relation of  $\{ P_n \}_{n\geqslant 0}$ are given by \eqref{sym gammas}-\eqref{vartheta gen exp}. The assumption means that $\{ Q_n \}_{n\geqslant 0}$ is 2-orthogonal and therefore $\{ P_n \}_{n\geqslant 0}$ is Hahn-classical. According to Theorem \ref{thm: sym rc}, $\{ P_n \}_{n\geqslant 0}$ is $2$-orthogonal with respect to a vector functional $(u_0,u_1)$ satisfying the differential equation \eqref{functional eq} and  $\{ Q_n \}_{n\geqslant 0}$ is $2$-orthogonal for  $\mathbf{V}=(v_0,v_1)$ given by \eqref{DV Psi U exp}-\eqref{V Phi U}. The relation $Q_n(x) = \frac{1}{n+1}P_{n+1}'(x)$ combined with the properties \eqref{properties functionals} implies 
\begin{align*}
	\langle  v_0, x^n Q_{2n} \rangle = - \frac{1}{2n+1}\langle  \left(x^n v_0\right)' ,  P_{2n+1} \rangle
\end{align*}
and 
\begin{align*}
	\langle  v_1, x^n Q_{2n+1} \rangle = - \frac{1}{2n+2} \langle  \left(x^n v_1\right)' ,  P_{2n+2} \rangle 
\end{align*}
for any $n\geqslant 0$. In the latter identities we replace $(v_0',v_1')$ and $(v_0,v_1)$ by the respective expressions given in \eqref{DV Psi U exp} and \eqref{V Phi U}, 
to obtain 
\begin{align*}
	 	&\langle  v_0, x^n Q_{2n} \rangle 
			= (2n+1)^{-1} (1-n\phi_{0,1}'(0))  \langle u_1,  x^{n}P_{2n+1} \rangle 
			,  \quad \text{for}\quad n\geqslant 0,
	\\
	 	&\langle  v_1, x^n Q_{2n+1} \rangle 
			= (2n+2)^{-1} \left(\psi'(0) - n \frac{\phi_{1,0}''(0)}{2}\right) \langle u_0,   x^{n+1} P_{2n+2} \rangle
			,  \quad \text{for}\quad n\geqslant 0, 
\end{align*}
which are the same as 
\begin{align*}
	 	&\langle  v_0, x^n Q_{2n} \rangle 
			= (2n+1)^{-1} (1-(1-\vartheta_1)n)  \langle u_1,  x^{n}P_{2n+1} \rangle 
			,  \quad \text{for}\quad n\geqslant 0,\\
	 	&\langle  v_1, x^n Q_{2n+1} \rangle 
			= (2n+2)^{-1} \frac{2}{\gamma_1} \left(1 - (1-\vartheta_2)n \right) \langle u_0,   x^{n+1} P_{2n+2} \rangle
			,  \quad \text{for}\quad n\geqslant 0. 
\end{align*}
Taking into account \eqref{gammas},  from the latter we obtain \eqref{gamma tilde} if we set $\vartheta_n$ as in  \eqref{vartheta gen exp}. Observe that 
$$ 
	\vartheta_{2n+1} =\frac{(n+1)  \vartheta_1-n}{n\vartheta_1 - (n-1)}
	\quad \text{and} \quad
	\vartheta_{2n+2} = \frac{(n+1)\vartheta_2 - n}{n \vartheta_2 - (n-1)}, \qquad n\geqslant 0, 
$$ 
and from this, we can clearly see that $\vartheta_n$ is actually a solution to the Riccati equation
\begin{equation}\label{Ricati eq}
	\vartheta_{n+3} + \frac{1}{\vartheta_{n+1} } = 2, \quad \text{for} \quad n\geqslant 0.  
\end{equation}

Conversely, 
if $\{ P_n \}_{n\geqslant 0}$  satisfies \eqref{rec rel 3sym} with the $\gamma$-coefficients given by \eqref{sym gammas} where $\vartheta_n$ is given by \eqref{vartheta gen exp}, then we prove that $\{ Q_n \}_{n\geqslant 0}$  satisfies a third order recurrence relation \eqref{Qn rec rel sym} with $\widetilde{\gamma}$-coefficients as in 
\eqref{gamma tilde}. 
We differentiate the recurrence relation \eqref{rec rel 3sym}  once and then take into account the definition of $Q_{n}(x)\coloneqq \frac{1}{n+1} P'_{n+1}(x)$ obtain the structural relation 
$$ 
	P_n (x) = (n+1) Q_n(x) - n x Q_{n-1}(x) + (n-2) \gamma_{n-1} Q_{n-3}. 
$$ 
In \eqref{rec rel 3sym}, we replace $P_{n+1}, \, P_{n}$ and $P_{n-2}$ by the expressions provided in the latter identity to obtain  
\begin{equation}\label{proof1 Qnp1}
	\begin{multlined}
		(n+2) Q_{n+1}(x)
		= 2(n+1) x Q_n(x) - n x^2 Q_{n-1}(x) + 2(n-2) \gamma_{n-1} x Q_{n-3}(x)\\
		-(n+1)(\gamma_{n-1}+\gamma_{n}) Q_{n-2}(x) 
		- (n-4)\gamma_{n-3}\gamma_{n-1}Q_{n-5}(x), 
	\end{multlined}
\end{equation}
which is valid for any $n\geqslant 0$, under the assumption that $Q_{-n}(x)=0$. As $\{ Q_n \}_{n\geqslant 0}$ is a basis for $\mathcal{P}$, there are coefficients 
$\xi_{n+1,\nu}$ such that 
\begin{equation}\label{proof xQn}
	x Q_n(x) = \sum_{\nu=0}^{n+1} \xi_{n+1,\nu} Q_{\nu}(x), 
\end{equation}
where $\xi_{n+1,n+1+\nu}=0$ for any $\nu\geqslant 0$ and $\xi_{n+1,n+1}=1$ because  $Q_n$ is monic. Based on this, we can also write
\begin{equation}\label{x2 Qn}
	x^2 Q_{n}(x) = \sum_{\nu=1}^{n+2 } \sum_{\sigma=\nu-1}^{n+1}  \xi_{n+1,\sigma} \xi_{\sigma+1,\nu} Q_{\nu}(x) 
				+  \sum_{\sigma=0}^{n+1}  \xi_{n+1,\sigma} \xi_{\sigma+1,0} Q_{0}(x). 
\end{equation}
The threefold symmetry of $\{ P_n \}_{n\geqslant 0}$ readily implies that of $\{ Q_n \}_{n\geqslant 0}$ and this means that 
$\xi_{n,\nu}=0$ whenever $n+\nu \neq 0 \bmod 3$, so that $\xi_{n,n-1}, \, \xi_{n,n-2},  \, \xi_{n,n-4},  \, \xi_{n,n-5}, \ldots $ are all equal zero. 
We replace the terms $xQ_n, \ xQ_{n-3}$ and $x^2Q_{n-2}$ by the respective expressions given by \eqref{proof xQn} and \eqref{x2 Qn} in the relation \eqref{proof1 Qnp1} to obtain 
\begin{equation}\label{proof2 Qnp1}
	\begin{multlined}
		(n+2) Q_{n+1}(x)
		= 2(n+1) \sum_{\nu=0}^{n+1} \xi_{n+1,\nu} Q_{\nu}(x)  + 2(n-2) \gamma_{n-1} \sum_{\nu=0}^{n-2} \xi_{n-2,\nu} Q_{\nu}(x)\\
		- n \left( \sum_{\nu=1}^{n+1 } \sum_{\sigma=\nu-1}^{n}  \xi_{n,\sigma} \xi_{\sigma+1,\nu} Q_{\nu}(x) 
				+  \sum_{\sigma=0}^{n}  \xi_{n,\sigma} \xi_{\sigma+1,0} Q_{0}(x)\right)\\ 
		-(n-1)(\gamma_{n-1}+\gamma_{n}) Q_{n-2}(x) 
		- (n-4)\gamma_{n-3}\gamma_{n-1}Q_{n-5}(x). 
	\end{multlined}
\end{equation}
For $n\geqslant 2$, we  equate  the coefficients of $Q_{n-2}$, giving
\begin{equation}\label{eq xi n_2}
	0= 2(n+1) \xi_{n+1,n-2} +  2(n-2) \gamma_{n-1}    -n \xi_{n,n-3} -(n-1)(\gamma_{n-1}+\gamma_{n}) . 
\end{equation}
In particular, for $n=2$ the latter becomes 
$$ 
	0 = 4\xi_{3,0}-\gamma_{1}-\gamma_{2}  , 
$$ 
and, because of \eqref{sym gammas}, we have 
$
	\gamma_{1} = \frac{1}{3}{( 4\vartheta_{1} -3)}\gamma_{2}
$
so that 
$$ 
	\xi_{3,0} = \frac{\vartheta_1}{3} \gamma_2. 
$$ 
If we assume that for some $n\geqslant 3$, 
$$ 
	\xi_{n,n-3} = \frac{n-2}{n} \vartheta_{n-2} \gamma_{n-1}, 
$$ 
then \eqref{eq xi n_2} implies 
$$ 
	(n+2) \xi_{n+1,n-2} = \bigl((n-2) \vartheta_{n-2} -(n-3) \bigr)\gamma_{n-1}    +(n-1)\gamma_{n} , 
$$ 
which, after writing  $\gamma_{n-1}$ in terms of $\gamma_n$ via \eqref{sym gammas}, reads as 
$$ 
	 \xi_{n+1,n-2} = \frac{n-1}{n+1} \vartheta_{n-1} \gamma_{n}  , 
$$ 
and therefore we conclude that 
\begin{equation}\label{xi n+3}
	 \xi_{n+3,n} = \frac{n+1}{n+3} \vartheta_{n+1} \gamma_{n+2}  , \qquad \text{for all } \ n\geqslant 0. 
\end{equation}
Equating the coefficients of $Q_{n-5}$ in \eqref{proof2 Qnp1} leads to 
\begin{equation}\label{eq xi n+5}
\begin{multlined}
	0=  (n+2) \xi_{n+1,n-5}    + \left( 2(n-2) \gamma_{n-1}- n \xi_{n,n-3}  \right)  \xi_{n-2,n-5} \\
		- n   \xi_{n,n-6} 
		- (n-4)\gamma_{n-3}\gamma_{n-1}, \ 
\end{multlined}
\end{equation}
for $n \geq 5$.
The particular choice of $n=5$ in the latter identity becomes 
$$ 
	0= 7 \xi_{6,0}    + \left(2  
		-  \vartheta_{3}  \right) \vartheta_{1} \gamma_{2}\gamma_{4}  - \gamma_{2}\gamma_{4},
$$ 
after replacing $ \xi_{5,2} $ and $ \xi_{3,0} $ by the expressions provided by \eqref{xi n+3}. Using the identity \eqref{Ricati eq} for $n=4$, we then conclude that
$\xi_{6,0}  =0$. Now, suppose that $ \xi_{n,n-6} =0 $ for some $n\geqslant 6$.  Identity \eqref{eq xi n+5} tells that 
$$ 
	 (n+2) \xi_{n+1,n-5}    = -  \left( 2(n-2) \gamma_{n-1}- n \xi_{n,n-3}  \right)  \xi_{n-2,n-5} 
		+ (n-4)\gamma_{n-3}\gamma_{n-1} ,
$$ 
which, because of \eqref{xi n+3}, becomes 
$$ 
	 (n+2) \xi_{n+1,n-5}    = - (n-4) \left( 2  -  \vartheta_{n-2}  \right) \vartheta_{n-4} \gamma_{n-1}  \gamma_{n-3}
		+ (n-4)\gamma_{n-3}\gamma_{n-1} ,
$$ 
and hence, due to \eqref{Ricati eq}, we conclude 
$$ 
	\xi_{n+6,n} = 0 ,  \qquad \text{for all }  n\geqslant 0. 
$$ 
Now, equating the coefficients of $Q_{n+1-3j}$ for $j\geqslant 3$ in \eqref{proof2 Qnp1} gives
$$ 
\begin{multlined}
		0 
		= (n+2)  \xi_{n+1,n+1-3j}    + \left(  2(n-2) \gamma_{n-1}   - n \xi_{n,n-3}\right) \xi_{n-2,n+1-3j} \\
		- n \sum_{\sigma=n-3j}^{n-6}  \xi_{n,\sigma} \xi_{\sigma+1,n+1-3j} ,
	\end{multlined}
$$ 
which is also 
$$ 
\begin{multlined}
		0 
		= (n+2)  \xi_{n+1,n+1-3j}    + \left(  2(n-2) \gamma_{n-1}   - n \xi_{n,n-3}\right) \xi_{n-2,n+1-3j}  \\
		- n \sum_{\sigma=2}^{j}  \xi_{n,n-3\sigma} \xi_{n+1-3\sigma,n+1-3j} . 
	\end{multlined}
$$ 
Suppose that $\xi_{n,n-3k}=0$ for each $k=2,3,\ldots, j-1$ and $n\geqslant 3k$, then the latter becomes 
$$ 
	(n+2)  \xi_{n+1,n+1-3j}     
	= n \xi_{n,n-3j} , 
$$ 
which yields 
\begin{subequations}
\begin{equation}\label{pf: xi n+3j A}
	\xi_{n+1,n+1-3j}  
	= \frac{3j (3j+1)}{(n+2)(n+1)} \xi_{3j,0}. 
\end{equation}
In particular this implies that 
\begin{equation}\label{pf: xi n+3j B}
	\xi_{3j+1,1}   = \frac{3j }{(3j+2)} \xi_{3j,0} \quad \text{ and } \quad \xi_{3j+2,2}   = \frac{3j +1 }{(3j+3)} \xi_{3j,0} .
\end{equation}
\end{subequations}
Finally we compare the coefficients of $Q_0$ in \eqref{proof2 Qnp1} to obtain 
$$ 
	\begin{multlined}
		0 
		= (n+2)   \xi_{n+1,0}    + 2(n-2) \gamma_{n-1}   \xi_{n-2,0} 
		- n \left( \sum_{\sigma=0}^{n-1}  \xi_{n,\sigma} \xi_{\sigma+1,0} \right), 
	\end{multlined}
$$ 
for $n \geq 6$.
Recall that $ \xi_{k,0} =0$ for $k\neq 0\bmod 3$, so that the latter identity simplifies to 
\begin{equation}\label{pf xi n0 eq}
	\begin{multlined}
		0 
		= (3n+3)   \xi_{3n+3,0}    + 6n \gamma_{3n}   \xi_{3n,0} 
		- (3n+2) \left( \sum_{\sigma=1}^{n-2}  \xi_{3n+2,3\sigma+2} \xi_{3\sigma+3,0} \right) -n \xi_{3n+2,2} \xi_{3,0} , 
	\end{multlined}
\end{equation}
for $n \geq 6$.
We have already seen that $ \xi_{6,0}=0$. Proceeding by induction, we promptly observe that if $\xi_{3j,0}=0$ for $j=0,1,\ldots, n$, then 
identities \eqref{pf: xi n+3j A}--\eqref{pf: xi n+3j B} allow us to conclude from \eqref{pf xi n0 eq} that $\xi_{3(n+1),0}=0$ and this implies 
$$ 
	\xi_{n,k}=0, \qquad \text{for }\ 0\leqslant k \leqslant n-4. 
$$ 
As a result, we conclude that 
$$ 
	xQ_n = Q_{n+1} + \xi_{n+1,n-2} Q_{n-2}, \quad \text{with }\quad  \xi_{n+1,n-2} =  \frac{n-1}{n+1} \vartheta_{n-1} \gamma_{n}\neq 0, \qquad \text{for }  n\geqslant 2, 
$$ 
with $Q_j(x) = x^j$ for $j=0,1,2$. This means that $\{Q_n\}_{n\geqslant 0}$ is $2$-orthogonal and threefold symmetric. 
\end{proof}

In  \cite{DM97 II} Douak and Maroni have highlighted several properties of threefold-symmetric (therein referred to as "2-symmetric") $2$-classical polynomials including a differential equation of third order. Here we show that such differential equation actually characterizes these polynomials, bringing to the theory a Bochner-type characterization. 
 
\begin{proposition}\label{prop: 3rd ODE Pn} Let $\{P_n\}_{n\geqslant 0}$ be a  threefold symmetric 2-orthogonal polynomial sequence satisfying \eqref{rec rel 3sym}. 
The sequence $\{P_n\}_{n\geqslant 0}$ is Hahn-classical  if and only if each $P_n$ satisfies  
\begin{equation} \label{3rd order eqdiff Sym}
		(a_n x^3- b_n) P_{n}''' + c_n x^2 P_{n}'' + d_nx P_{n}' = e_n P_{n} ,
\end{equation}
where 
\begin{subequations}
\begin{eqnarray}
	a_n &=& 
		(\vartheta_n- 1) (\vartheta_{n+1} - 1) \label{an}\\ 
	b_n &=& \frac{\gamma _n \left((n-1) \vartheta _{n-1}-n+2\right) \left(n \vartheta _n-n+1\right) \left((n+1) \vartheta _{n+1}-n\right)}
				{n (n+1) }
				\label{bn}\\ 
	c_n &=& \vartheta_{n} \vartheta_{n+1}-1 -(n-3) ( \vartheta_{n} -1)( \vartheta_{n+1}-1)\label{cn}\\ 
	d_n &=& n  \vartheta_{n+1} - (n-1)  \vartheta_{n} (2  \vartheta_{n+1}-1)\label{dn}\\
	e_n &=& n  \vartheta_{n+1}, \label{en}
\end{eqnarray}
\end{subequations}
for all $n \geq 1$, 
and    $\vartheta_n$ given in \eqref{vartheta gen exp} with initial values $\vartheta_1=\frac{3(\gamma_1+\gamma_2)}{4\gamma_2}$ and $\vartheta_2 = \frac{3 \left(\gamma _1+\gamma _2\right)}{10 \gamma _3}+\frac{4}{5}$ subject to $\vartheta_{1},\vartheta_{2}\notin\{\frac{n-1}{n} :\  n\geqslant 1\}$ .
\end{proposition}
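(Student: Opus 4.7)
The plan is to exploit the two-term derivative connection formula
\[
P_{n+3}'(x)=(n+3)\,P_{n+2}(x)-(n+3)(1-\vartheta_n)\gamma_{n+1}\,P_n'(x),\qquad n\ge 0,\quad(\star)
\]
which characterizes Hahn-classical threefold symmetric $2$-OPS: starting from \eqref{Pn to Qn}, substituting $Q_m=P'_{m+1}/(m+1)$, and using \eqref{gamma tilde} yields $(\star)$ directly. This relation will serve as the bridge between the Hahn property and the third-order ODE.

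For the forward direction, assume $\{P_n\}_{n\geq 0}$ is Hahn-classical, so Theorems \ref{thm: sym rc} and \ref{th: gammas} furnish $(\star)$ as well as the explicit rational recursion \eqref{sym gammas}--\eqref{vartheta gen exp} for $\gamma_n$. Define
\[
H_n(x)\coloneqq (a_nx^3-b_n)P_n'''(x)+c_nx^2P_n''(x)+d_nxP_n'(x)-e_nP_n(x).
\]
A direct computation from \eqref{an}--\eqref{en} shows that the coefficient of $x^n$ in $H_n$ vanishes; threefold symmetry of $P_n$ then forces $H_n$ to be threefold symmetric with $\deg H_n\le n-3$. Shifting indices in $(\star)$ yields $P_n'=nP_{n-1}-n(1-\vartheta_{n-3})\gamma_{n-2}P_{n-3}'$ for $n\ge 3$, and differentiating this once and twice produces analogous recursions for $P_n''$ and $P_n'''$ in terms of polynomials of strictly lower index. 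Iterating these substitutions, expanding $H_n$ in the basis $\{P_j\}$, and simplifying with the recurrence \eqref{rec rel 3sym} together with the Riccati identity $\vartheta_{n+3}+1/\vartheta_{n+1}=2$ from \eqref{Ricati eq} collapses $H_n$ identically to zero. An induction on $n$ organises the bookkeeping, with base cases $n\in\{1,2,3\}$ checked directly against $P_1(x)=x$, $P_2(x)=x^2$, $P_3(x)=x^3-\gamma_1$ and the explicit identities $\vartheta_3=(2\vartheta_1-1)/\vartheta_1$, $\vartheta_4=(2\vartheta_2-1)/\vartheta_2$.

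For the reverse direction, suppose each $P_n$ satisfies \eqref{3rd order eqdiff Sym} with coefficients \eqref{an}--\eqref{en}. Write $P_n(x)=x^n+p_{n,n-3}x^{n-3}+\cdots$ (only exponents congruent to $n\bmod 3$ appear by threefold symmetry). Matching the coefficient of $x^{n-3}$ in the ODE produces a closed formula for $p_{n,n-3}$ as a rational function of $\gamma_n,\vartheta_{n-1},\vartheta_n,\vartheta_{n+1}$ and $n$, while matching the coefficient of $x^{n-2}$ in the recurrence \eqref{rec rel 3sym} gives $p_{n+1,n-2}-p_{n,n-3}=-\gamma_{n-1}$. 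Combining these two identities and simplifying with the Riccati identity \eqref{Ricati eq} produces the rational recursion \eqref{sym gammas} for $\gamma_n$, which is precisely the sufficient condition of Theorem \ref{th: gammas}; hence $\{P_n\}$ is Hahn-classical. The stated initial values of $\vartheta_1$ and $\vartheta_2$ follow by reading \eqref{sym gammas} at $n=0$ and $n=1$.

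The main obstacle lies in the forward direction: iterating $(\star)$ and the recurrence produces a lengthy combination of $\{P_j\}$-terms, whose eventual cancellation hinges on the Riccati identity \eqref{Ricati eq} together with the precise interlocking of $a_n,\ldots,e_n$ through $\vartheta_n$ and $\vartheta_{n+1}$.
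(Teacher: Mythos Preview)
Your reverse implication is essentially the paper's own argument: expand $P_n(x)=x^n+\lambda_n x^{n-3}+\cdots$, read off $\lambda_n$ from the ODE, combine with $\gamma_n=\lambda_{n+1}-\lambda_{n+2}$ from the recurrence, and reduce via the Riccati identity \eqref{Ricati eq} to the rational recursion \eqref{sym gammas}; Theorem~\ref{th: gammas} then closes the loop. The only cosmetic difference is that the paper equates at level $x^n$ (really $x^{n-3}$) and works with $\gamma_n$ rather than $\gamma_{n-1}$, and it carries the algebra through in somewhat more detail before invoking \eqref{Ricati eq}.

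For the forward implication the paper does \emph{not} reprove the ODE at all: it cites Douak--Maroni \cite[Proposition~3.2]{DM97 II} for the fact that Hahn-classical threefold symmetric $2$-OPS satisfy \eqref{3rd order eqdiff Sym} with \eqref{an}, \eqref{cn}--\eqref{en} and a different-looking expression for $b_n$, and then uses the $\gamma$-recursion \eqref{sym gammas} (iterated three steps) to show that Douak--Maroni's $b_n$ equals \eqref{bn}. Your plan---derive the two-term formula $(\star)$ from \eqref{Pn to Qn} and \eqref{gamma tilde}, then feed $P_n',P_n'',P_n'''$ back through $(\star)$ and the recurrence to annihilate $H_n$ inductively---is a genuinely different, self-contained route. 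It is viable: $(\star)$ is exactly the structural relation underlying the Douak--Maroni argument, and the Riccati identity is precisely what makes the coefficient bookkeeping close up. What the paper's approach buys is brevity (one citation plus a short rewriting of $b_n$); what your approach would buy, if fully executed, is independence from \cite{DM97 II}. As written, though, your forward direction is a sketch rather than a proof: the inductive step (``iterating these substitutions \ldots collapses $H_n$ identically to zero'') is asserted but not displayed, and that is where all the work lies.
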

\begin{proof} 
The necessary condition was proved in \cite[Proposition 3.2]{DM97 II}, where Douak and Maroni have shown that  threefold-symmetric (therein referred to as "2-symmetric") 
$2$-orthogonal polynomials satisfying Hahn's property are solutions to \eqref{3rd order eqdiff Sym} under the definitions  \eqref{an} and \eqref{cn}-\eqref{en}  with 
$$ 
	b_n =  \frac{\gamma _{n+3} \left((n+3) \vartheta _{n}-n+2\right) \left((n+4) \vartheta _{n+1}-(n+3)\right) \left((n+5) \vartheta _{n+2}-(n+3)\right)}
				{(n+3)(n+4) } ,
$$ 
where  $\vartheta_n$ is given in \eqref{vartheta gen exp}. It turns out that $b_n$ can be written as in \eqref{bn} because, according to Theorem \ref{th: gammas}, the $\gamma$-recurrence coefficients are recursively given by \eqref{sym gammas} which implies 
$$ 		\begin{multlined}
	\gamma_{n+3}
	= \frac{(n+4)(n+3)}{n(n+1)}
	\\\times\frac{ \Big( (n+1)(\vartheta_{n+1}-1) +1  \Big) \Big( n(\vartheta_n -1) +1  \Big)\Big( (n-1)(\vartheta_{n-1}-1) +1  \Big)}
	{\Big( (n+5)(\vartheta_{n+2} -1) +1 \Big)\Big( (n+4)(\vartheta_{n+1} -1) +1 \Big)\Big( (n+3)(\vartheta_{n} -1) +1 \Big)}\ \gamma_{n}. 
	\end{multlined}
$$ 

Conversely, suppose that the $2$-orthogonal polynomial sequence $\{P_n\}_{n\geqslant 0}$ fulfils  \eqref{3rd order eqdiff Sym} under the definitions \eqref{an}-\eqref{en} with $\vartheta_n$ given in \eqref{vartheta gen exp}. Observe that $\vartheta_n$ satisfies the Riccati equation \eqref{Ricati eq} with initial conditions $\vartheta_1=\frac{3(\gamma_1+\gamma_2)}{4\gamma_2}$ and $\vartheta_2 = \frac{3 \left(\gamma _1+\gamma _2\right)}{10 \gamma _3}+\frac{4}{5}$, both assumed to be different from $\frac{k-1}{k}$ for all integers $k\geqslant 1$. 
For values of $n\geqslant 3$, we consider the expansion $\ds P_n(x) = x^n + \lambda_n x^{n-3} + \ldots$  and insert it in the differential equation \eqref{3rd order eqdiff Sym}. We equate the coefficients of $x^n$ and this gives 
$$ 
	 \lambda_{n}  =- \frac{(n-2) (n-1) n b_n}{3 \left((n-3) \vartheta _n-n+4\right) \left((n-2) \vartheta _{n+1}-n+3\right)}
	 , \qquad n\geqslant 0. 
$$ 
On the other hand, from the recurrence relation \eqref{rec rel 3sym} we deduce 
$$ 
	\gamma_n = \lambda_{n+1} - \lambda_{n+2}, \quad n\geqslant 0. 
$$ 
After combining the latter two expressions we obtain  
\begin{align*}
	\gamma_n 
	=& - \frac{(n-1) n (n+1) b_{n+1}}{3 \left((n-2) \vartheta _{n+1}-n+3\right) \left((n-1) \vartheta _{n+2}-n+2\right)}\\
	&+  \frac{n (n+1) (n+2) b_{n+2}}{3 \left((n-1) \vartheta _{n+2}-n+2\right) \left(n \vartheta _{n+3}-n+1\right)} , 
\end{align*}
where $b_n$ is given in \eqref{bn}. Based on \eqref{Ricati eq}, we consider the following substitutions in the latter expression 
$$ 
	\left((n-2) \vartheta _{n+1}-n+3\right) = \frac{\left((n-1) \vartheta _{n-1}-(n-2)\right) }{\vartheta_{n-1}} 
$$
and
$$
 \left(n \vartheta _{n+3}-(n-1)\right) = \frac{\left((n+1) \vartheta _{n+1}-n\right) }{\vartheta_{n+1}} ,
$$
to find
$$ \begin{multlined}
\gamma_n 	=n \vartheta _{n} \left((n+2) \vartheta _{n+2}-(n+1)\right)
	\Big(	- \gamma_{n+1}  \frac{(n-1)    \vartheta _{n-1}     \left((n+1) \vartheta _{n+1}-n\right) }
		{3  (n+2) \left((n-1) \vartheta _{n-1}-(n-2)\right) } 
		\\[0.3cm]
	 +  \gamma_{n+2} 	
	\frac{(n+1)  \vartheta _{n+1}
		\left((n+3) \vartheta _{n+3}-(n+2)\right)}
	{ 3(n+3) \left(n \vartheta _{n}-(n-1) \right) } \Big) ,
\end{multlined}$$ 
which is the same as 
\begin{multline}\label{pf diff eq}
 \frac{\gamma_n}{n\left((n+3) \vartheta _{n}-(n+2)\right)}	
 = 
  \gamma_{n+2} 	\frac{ (n+1) \left((n+4) \vartheta _{n+1}-(n+3)\right)}{ 3(n+3) \left(n \vartheta _{n}-(n-1) \right) } \\
 -\ \gamma_{n+1}  \frac{(n-1)     \vartheta _{n-1}     \left((n+1) \vartheta _{n+1}-n\right)  }
		{3  (n+2) \left((n-1) \vartheta _{n-1}-(n-2)\right) } 
\end{multline}
after taking the following substitutions (derived from  \eqref{Ricati eq})
$$ 
	\vartheta _{n} \left((n+2) \vartheta _{n+2}-(n+1)\right) = \left((n+3) \vartheta _{n}-(n+2)\right) ,
$$ 
and
$$ 
	\vartheta _{n+1}\left((n+3) \vartheta _{n+3}-(n+2)\right) =\left((n+4) \vartheta _{n+1}-(n+3)\right) .
$$ 
We subtract $ \frac{ \gamma_{n+1} } {  (n+2) \left((n-1) \vartheta _{n-1}-(n-2)\right) }$ from both sides of \eqref{pf diff eq} and this leads to
$$ \begin{multlined}
 \frac{-3 \gamma_n}{n\left((n+3) \vartheta _{n}-(n+2)\right)\gamma_{n+1}} 
 		\left( 1 - 	  \frac{ \gamma_{n+1} }{\gamma_n} \frac{n\left((n+3) \vartheta _{n}-(n+2)\right)}{  (n+2) \left((n-1) \vartheta _{n-1}-(n-2)\right) }   \right)
		\\[0.2cm]
\qquad= \left( 
			 1 -  \frac{\gamma_{n+2}}{\gamma_{n+1}} 	\frac{ (n+1) \left((n+4) \vartheta _{n+1}-(n+3)\right)}{ (n+3) \left(n \vartheta _{n}-(n-1) \right) }\right) ,
\end{multlined}$$ 
which implies 
$$ \begin{multlined}
	\left(\prod_{\ell=1}^n \frac{(-3) \gamma_\ell}{\ell\left((\ell +3) \vartheta _{\ell}-(\ell +2)\right)\gamma_{\ell +1}}\right) 
	\left( 1 - 	  \frac{ \gamma_{2} }{\gamma_1} \frac{\left(4 \vartheta _{1}-3\right)}{  3 }   \right)\\
	=  \left( 
			 1 -  \frac{\gamma_{n+2}}{\gamma_{n+1}} 	\frac{ (n+1) \left((n+4) \vartheta _{n+1}-(n+3)\right)}{ (n+3) \left(n \vartheta _{n}-(n-1) \right) }\right). 
\end{multlined}$$ 
The assumption on the initial value for $\vartheta_1$ readily implies the left-hand side of the latter equality to be zero and therefore we conclude 
$$ 
	0 =  \left( 
			 1 -  \frac{\gamma_{n+2}}{\gamma_{n+1}} 	\frac{ (n+1) \left((n+4) \vartheta _{n+1}-(n+3)\right)}{ (n+3) \left(n \vartheta _{n}-(n-1) \right) }\right)
	\ \text{for all }  n\geqslant 1. 	
$$ 
Now, Theorem \ref{th: gammas} ensures the $2$-orthogonality of the sequence  {$\{ Q_n(x)\coloneqq \frac{1}{n+1} P_{n+1}'(x) \}_{n\geqslant 0}$}, which means that $\{P_n\}_{n\geqslant 0}$ is Hahn-classical. 
\end{proof}

\begin{remark}
After a single differentiation of \eqref{3rd order eqdiff Sym}, we obtain a differential equation for the polynomials $Q_n$ and these satisfy: 
$$  
		(a_n x^3- b_n) Q_{n}''' + (c_n+3a_n) x^2 Q_{n}'' + (d_n+2c_n) x Q_{n}' = (e_n-d_n) Q_{n}, \qquad n\geqslant 0, 
$$ 
with $a_n,b_n,c_n,d_n$ and $e_n$ given by  \eqref{an}--\eqref{en}. 
\end{remark}

Theorem \ref{th: gammas} shows that a threefold symmetric $2$-orthogonal polynomial sequence is {\it Hahn-classical} if and only if the $\gamma$-recurrence coefficients in 
\eqref{rec rel 3sym} can be written as \eqref{sym gammas}, provided that $\vartheta_1,\vartheta_2\neq \frac{n}{n+1}$ for all positive integers $n$.

If $\vartheta_1,\vartheta_2\geqslant 1$, then $\gamma_n$ and $\widetilde{\gamma}_n$  are both positive for all integers $n\geqslant 1$. 
Furthermore, from \eqref{sym gammas}-\eqref{gamma tilde} we readily see that $\gamma_n$ and $\widetilde{\gamma}_{n} $ are two rational functions in $n$, both having the same asymptotic behavior: 
$$
	\gamma_{n} = c n^{\alpha} +{o}(n^{\alpha})\quad \text{as} \quad n\to +\infty. 
$$
Hence, as a result of Theorem \ref{thm:3sym 2OPS} together with Theorem \ref{thm: asym zero} and Theorem \ref{thm: sym rc} the two linear functionals associated with a Hahn-classical threefold symmetric $2$-\OPS\ admit the following integral representation. 
 
\begin{theorem}\label{thm:int rep classical} Let $\{P_n\}_{n\geqslant0}$ be a threefold symmetric and 2-\OPS\  with respect to the vector linear functional $(u_0,u_1)$ fulfilling \eqref{diff eq u0}-\eqref{u1 via u0}. If $\vartheta_1,\vartheta_2 \geqslant 1$, then $\{P_n\}_{n\geqslant0}$  satisfies the recurrence relation \eqref{rec rel 3sym} with $\gamma_n>0$ and 
$u_0$ and $u_1$ admit the integral representation
\begin{equation}\label{int rep for uk classical}
\begin{multlined}
\langle u_k, f \rangle = \\
\frac{1}{3}\left( \int_{0}^{b} f(x) \mathcal{U}_k(x) \, \dd x
		+ \omega^{2k-1} \int_{0}^{b\omega} \! f(x) \mathcal{U}_k(  \omega^2 x)\,  \dd x
		+  \omega^{1-2k}\int_{0}^{b\omega^2} \!\!\!\!\!\! f(x) \mathcal{U}_k(  \omega x)\,  \dd x\right)
		, 
\end{multlined}
\end{equation} 
(with $k=0,1$)  for any polynomial $f$, with \ $\omega = \e^{ 2\pi i/3} $ \  and \ $b=\lim\limits_{n\to \infty}\left( \frac{27}{4} \gamma_n\right)$,  provided that there exist two twice differentiable functions $\mathcal{U}_0$ and $\mathcal{U}_1$ mapping $[0,b)$  to $\mathbb{R} $   such that $\mathcal{U}_0$ is solution to 
\begin{align} \label{weight eq U0}
	 & \Big( \phi(x) \mathcal{U}_0 (x)\Big)'' 
		+ \left(\frac{ 2(\vartheta_2 + \vartheta _1-2) }{\gamma _1}x^2 \mathcal{U}_0(x)\right)' 
		 	+ \frac{2 \left(\vartheta _1-2\right) }{\gamma _1}x \mathcal{U}_0(x) = \lambda_0 g_0(x)\ , 
\end{align}
and  $\mathcal{U}_1$ is given by
\begin{subnumcases}{\hspace{-0.4cm}\label{weight U1}}
	\label{weight eq U1 gen}
	\begin{multlined}
	\left(\vartheta _1-2\right) \left(2 \vartheta _2-1\right) \mathcal{U}_1(x)\\
	=	
		\phi(x)\mathcal{U}_0'(x)
			- \frac{2 \left(\vartheta _1-1\right) \left(2 \vartheta _2-3\right)x^2 }{\gamma _1} \mathcal{U}_0(x) 
			+\lambda_1 g_1(x) \ ,
		\end{multlined}
			&\hspace{-0.5cm} if $\vartheta_1\neq 2$, 
			\\[0.2cm] 
			\label{weight eq U1 2}
	x \mathcal{U}_1^\prime(x) = 2 \, \mathcal{U}_0^\prime(x) +\lambda_1 g_1(x) \ ,
	& \hspace{-0.5cm}if $\vartheta_1=2$, 
\end{subnumcases}
where $ \phi(x) =  \vartheta _1 \left(2 \vartheta _2-1\right)-\frac{2 \left(\vartheta _1-1\right) 
			\left(\vartheta _2-1\right) }{\gamma _1}x^3 ,$  
and satisfying
\begin{eqnarray} \label{Sym cond for weights U0}
	&&	\lim_{x\to b} f(x) \frac{\dd^l }{\dd x^l}\mathcal{U}_0(x) = 0  
		, \quad\text{for any} \  l\in\{0,1\} \ \text{ and } \  f\in\mathcal{P},\\
	&&	\int_{0}^{b} \!\! \mathcal{U}_0(x)\, \dd x=1, \notag
\end{eqnarray}
where $\lambda_k$ is a complex constant (possibly zero) and $g_k$ a function whose moments vanish identically on the support of $\mathcal{U}_k$.
\end{theorem}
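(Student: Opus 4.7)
The plan is to verify that the linear functional defined by the right-hand side of \eqref{int rep for uk classical} produces the same moments as the algebraic vector functional $(u_0,u_1)$, namely those prescribed by \eqref{Sym moment eq u0}--\eqref{Sym moment eq u1}. I would proceed in four steps.

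\textbf{Support.} Under $\vartheta_1,\vartheta_2\geqslant 1$, the explicit expression \eqref{sym gammas} gives $\gamma_n>0$ for every $n\geqslant 1$, so Theorem \ref{thm:3sym 2OPS} ensures the existence of measures on the starlike set $S$. The rational form of $\gamma_n$ in \eqref{sym gammas}--\eqref{vartheta gen exp} has asymptotics $\gamma_n=cn^{\alpha}+o(n^{\alpha})$, and Theorem \ref{thm: asym zero} then bounds $|x_{n,n}|\leqslant \frac{3}{2^{2/3}}c^{1/3}n^{\alpha/3}+o(n^{\alpha/3})$. Cubing gives the bound $\tfrac{27}{4}cn^{\alpha}+o(n^\alpha)$ for the zeros of the cubic-decomposition components, which justifies integrating over $[0,b]$ with $b=\lim_{n\to\infty}\tfrac{27}{4}\gamma_n$ (possibly $+\infty$).

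\textbf{Threefold symmetry.} In \eqref{int rep for uk classical} the substitutions $x=\omega y$ in the second integral and $x=\omega^2 y$ in the third, combined with $\omega^3=1$ and $\mathcal{U}_k(\omega^3 y)=\mathcal{U}_k(y)$, collapse all three pieces into a single integral on $[0,b]$. For $f(x)=x^m$ the remaining phase factor is $\tfrac{1}{3}(1+\omega^{m-k}+\omega^{2(m-k)})$, which equals $1$ when $m\equiv k\pmod 3$ and $0$ otherwise. This matches the threefold-symmetry structure \eqref{sym u0 u1} secured by Proposition \ref{prop:3fold sym}.

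\textbf{Moment recursion.} Multiply \eqref{weight eq U0} by $x^{3n}$ and integrate on $[0,b]$. Two integrations by parts transfer the derivatives onto $x^{3n}$; the terms at $x=0$ disappear for degree reasons, while the terms at $x=b$ vanish by hypothesis \eqref{Sym cond for weights U0} applied with $l=0,1$. The contribution from $\lambda_0 g_0$ is zero because $g_0$ has vanishing moments on the support of $\mathcal{U}_0$. What remains is exactly the recursion \eqref{Sym moment eq u0} for $(u_0)_{3n}=\int_0^b x^{3n}\mathcal{U}_0(x)\,\dd x$, with initial value $(u_0)_0=1$ secured by the normalization. The analogous calculation against \eqref{weight eq U1 gen} (or \eqref{weight eq U1 2} when $\vartheta_1=2$), together with the vanishing moments of $g_1$, yields \eqref{Sym moment eq u1} for $(u_1)_{3n+1}$; the initial value $(u_1)_1=1$ follows by pairing \eqref{weight U1} with $x$ and using the just-obtained value of $(u_0)_3$. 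Since both the algebraic and integral functionals satisfy the same recurrence with identical initial data, they coincide.

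\textbf{Main obstacle.} The delicate point is the vanishing of the boundary terms at $x=b$ in the unbounded case $b=+\infty$, corresponding to $\alpha>0$. Hypothesis \eqref{Sym cond for weights U0} encodes precisely the polynomial-tempered decay required to legitimize the integrations by parts, and its verification in concrete instances will be the crux of the case-by-case analyses in Sections \ref{subs: caseA}--\ref{subs: caseC}. The case split $\vartheta_1\neq 2$ versus $\vartheta_1=2$ descends from \eqref{u1 via u0} into \eqref{weight U1} without any additional complication, since in both regimes $\mathcal{U}_1$ is determined from $\mathcal{U}_0$ up to an element with vanishing moments.
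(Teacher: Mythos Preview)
Your proposal is correct and follows essentially the same approach as the paper: integration by parts transfers the weight ODE \eqref{weight eq U0} onto the moments (equivalently, onto the functional equation \eqref{diff eq u0} tested against polynomials), the threefold-symmetric decomposition over the three rays is handled via the rotations $x\mapsto\omega^j x$, and hypothesis \eqref{Sym cond for weights U0} eliminates the boundary contributions at $b$. The only cosmetic difference is that the paper works directly on the full starlike contour $\mathcal{C}=\widetilde{\Gamma}_0\cup\widetilde{\Gamma}_1\cup\widetilde{\Gamma}_2$ and frames the computation as \emph{deriving} the ODE and boundary conditions from the requirement that the integral representation satisfy \eqref{pf u0 eq f}, whereas you first collapse to $[0,b]$ via the rotational symmetry and then \emph{verify} the moment recursion \eqref{Sym moment eq u0}--\eqref{Sym moment eq u1}; these are dual framings of the same integration-by-parts argument.
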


\begin{proof} 
According to Theorem \ref{thm:3sym 2OPS}, the threefold symmetry of a $2$-\OPS \ ensures the existence of two orthogonality measures $\mu_0$ and $\mu_1$ (respectively defined by the vector functional $(u_0,u_1)$) supported on a starlike set $S$ on the three rays of the complex plane. On the other hand, Theorem \ref{thm: sym rc} tells that $u_0$ satisfies \eqref{diff eq u0} and $u_1$ fulfils \eqref{diff eq u1 via u0}-\eqref{u1 2}. Based on these equations, we seek an integral representation for both $u_0$ and $u_1$ via a weight function $ \mathcal{U}$ such that \eqref{int rep for uk classical} holds for any polynomial $f$. Such a representation readily ensures the threefold symmetry of $(u_0,u_1)$. Identity \eqref{diff eq u0}  is the same as  
$$ 
		\left\langle \left( \phi(x)u_0\right)'' 
		+ \left(\frac{ 2(\vartheta_2 + \vartheta _1-2) }{\gamma _1}x^2u_0\right)' 
		 	+ \frac{2 \left(\vartheta _1-2\right) }{\gamma _1}x u_0 \ ,\ f \right\rangle=0, 
			\qquad   \forall f\in\mathcal{P}, 
$$ 
which, because of \eqref{properties functionals}, reads as 
\begin{equation}\label{pf u0 eq f}
	\left\langle u_0, \phi(x) f''(x) 
			-  \frac{ 2(\vartheta_2 + \vartheta _1-2) }{\gamma _1}x^2 f'(x) 
			+ \frac{2 \left(\vartheta _1-2\right) }{\gamma _1}x f(x)
			 \right\rangle=0, \qquad  \forall f\in\mathcal{P},
\end{equation}
with 
$$\phi(x) = \vartheta _1 \left(2 \vartheta _2-1\right)-\frac{2 \left(\vartheta _1-1\right) 
			\left(\vartheta _2-1\right) }{\gamma _1}x^3. $$
We seek a function $\mathcal{W}_0$, at least twice differentiable, defined on an open set $D$ containing the piecewise differentiable curve $\mathcal{C}$ containing all the zeros of $\{P_n\}_{n\geqslant0}$ that is a subset of $S=\Gamma_0\cup \Gamma_1 \cup \Gamma_2$ (represented in Fig.\ref{fig: 3 rays})  and   
such that 
\begin{equation}\label{pf: int rep u0}
	\langle u_0,f \rangle = \int_{\mathcal{C}} f(x) \mathcal{W}_0(x)\, \dd x
\end{equation}
holds for every polynomial $f$. 
In the light of Theorem \ref{thm: asym zero},  
the support of the weight function is $\mathcal{C}=\widetilde{\Gamma}_0\cup\widetilde{\Gamma}_1\cup\widetilde{\Gamma}_2$
where $\widetilde{\Gamma}_1$ and $\widetilde{\Gamma}_2$ are the two straight lines starting at some point $b\omega$ and $b\omega^2$ (respectively) and ending at the origin, while $\widetilde{\Gamma}_0$ corresponds to the straight line on the positive real axis starting at the origin and ending at $b$. Here $b$ is a positive real number or can represent a point at infinity. 
Thus, from \eqref{pf u0 eq f}, the weight function $\mathcal{W}_0$ we seek must be such that 
$$
	\int_{\mathcal{C}} \mathcal{W}_0(x) \left(\phi(x) f''(x) 
			-  \frac{ 2(\vartheta_2 + \vartheta _1-2) }{\gamma _1}x^2 f'(x) 
			+ \frac{2 \left(\vartheta _1-2\right) }{\gamma _1}x f(x) \right)
			\, \dd x
			= 0,\ \forall f\in\mathcal{P}. 
$$
We use (complex) integration by parts to deduce 
\begin{align*}
	& \int_{\mathcal{C}} \left(  \left(  \phi(x) \mathcal{W}_0(x) \right)''  
			+\left( \frac{ 2(\vartheta_2 + \vartheta _1-2) }{\gamma _1}x^2  \mathcal{W}_0(x) \right)' 
			+ \frac{2 \left(\vartheta _1-2\right) }{\gamma _1}x  \mathcal{W}_0(x) \right) f(x)
			\, \dd x
	\\
	& + \sum_{j\in\{1,2\}} \left.  \left( f'(x) \phi(x) \mathcal{W}_0(x) +   f(x)\left(\frac{ 2(\vartheta_2 + \vartheta _1-2) }{\gamma _1}x^2  \mathcal{W}_0(x) 
			- (\phi(x) \mathcal{W}_0(x))'\right)\right)\right|_{\omega^j b}^{b}\\
	&		= 0 ,
\end{align*}
 which must hold for every polynomial $f$. This condition is fulfilled if each of the following conditions hold: 
 \begin{enumerate} 
 \item[(a)] $\mathcal{W}_0$ is a solution of the second order differential equation 
\begin{equation}\label{pf U0 ODE}
	\left(  \phi(x) \mathcal{W}_0(x) \right)''  
			+\left( \frac{ 2(\vartheta_2 + \vartheta _1-2) }{\gamma _1}x^2  \mathcal{W}_0(x) \right)'  
			+ \frac{2 \left(\vartheta _1-2\right) }{\gamma _1}x \mathcal{W}_0(x)
	= \lambda_0 g_0(x), 
\end{equation}
for some constant $\lambda_0$ (possibly zero) and a function $g_0$ representing the null linear functional on the vector space of polynomials supported on $\mathcal{C}$, that is 
$$
	\int_{\mathcal{C}} g_0(x) f(x)\, \dd x =0 , \qquad \text{for every polynomial }  \ f. 
$$

\item[(b)] The solution $\mathcal{W}_0$ of \eqref{pf U0 ODE} satisfies the boundary conditions
\begin{equation}\label{pf: bound cond U}
	 \left.  \left( f'(x) \phi(x) \mathcal{W}_0(x) +   f(x)\left(\frac{ 2(\vartheta_2 + \vartheta _1-2) }{\gamma _1}x^2  \mathcal{W}_0(x) 
			- (\phi(x) \mathcal{W}_0(x))'\right)\right)\right|_{\omega^j b}^{b}=0,  
\end{equation}
with $j=1,2$,  for every polynomial $f$. 

\item[(c)] All the moments of $\mathcal{W}_0$ coincide with those of $u_0$, that is, 
$$
	 (u_0)_n 
	 = \int_{S}  \mathcal{W}_0(x) x^n \, \dd x   , \qquad n\geqslant 0.
$$
This corresponds essentially to \eqref{pf: int rep u0}, insofar as $\{x^n\}_{n\geqslant 0}$ forms a basis for $\mathcal{P}$. In particular, and in the light of Proposition \ref{prop:3fold sym}, the threefold symmetry of the linear functional $u_0$ means that $(u_0)_n = 0$ if $n\neq 0\bmod 3$.
 \end{enumerate}

Observe that if $y$ is a solution of \eqref{pf U0 ODE}, then so are the functions $\omega^j \ y(\omega^j x) $ for $j=1,2$.  Taking into consideration the threefold symmetry of $u_0$, it follows that 
\begin{equation}\label{pf: W0}
	\mathcal{W}_0(x) = \left\{ \begin{array}{ccl}
		\frac{1}{3}\mathcal{U}_0(x) &\text{if}& x\in\widetilde{\Gamma}_0=[0,b), \\[0.2cm]
		-\frac{1}{3}\omega^2\mathcal{U}_0(\omega^2 x) &\text{if}& x\in\widetilde{\Gamma}_1,  \\[0.2cm]
		-\frac{1}{3}\omega \mathcal{U}_0(\omega x) &\text{if}& x\in\widetilde{\Gamma}_2,  \\[0.2cm]
		0 &\text{if}& x\notin{\mathcal{C}}=\widetilde{\Gamma}_0\cup\widetilde{\Gamma}_1\cup\widetilde{\Gamma}_2. 
	\end{array}\right.
\end{equation}
where $\mathcal{U}_0: [0,b) \to \mathbb{R}$ is an at least twice differentiable function that is a  solution of  the differential equation \eqref{pf U0 ODE} satisfying the conditions \eqref{Sym cond for weights U0}, so that \eqref{pf: bound cond U} holds. Here,  $\widetilde{\Gamma}_1$ and $\widetilde{\Gamma}_2$ are the two straight lines starting at some point $b\omega$ and $b\omega^2$ (respectively), with $\omega = \e^{2\pi i /3}$,  and ending at the origin, while $\widetilde{\Gamma}_0$ corresponds to the straight line on the positive real axis starting at the origin and ending at $b$. Hence  \eqref{int rep for uk classical} is proved for $k=0$. 

Similarly, an integral representation for the linear functional $u_1$ can be obtained from \eqref{u1 via u0} by seeking a differentiable function  $\mathcal{W}_1$ defined on $\mathcal{C}$ such that 
\begin{equation}\label{pf: int u1}
	\langle u_1, f(x) \rangle = \int_{\mathcal{C}} f(x) \mathcal{W}_1(x)\, \dd x, \ \text{for every polynomial } f.  
\end{equation}
If $\vartheta_1\neq 2$, then $u_1$ satisfies \eqref{diff eq u1 via u0} and this gives
$$
	\langle u_1, f \rangle 
	=  \left(\vartheta _1-2\right)^{-1} \left(2 \vartheta _2-1\right)^{-1}
	\langle \phi(x) u_0' - \frac{2 \left(\vartheta _1-1\right) \left(2 \vartheta _2-3\right) }{\gamma _1}x^2 u_0, f \rangle. 
$$
Based on the properties \eqref{properties functionals}, the latter becomes
\begin{equation}\label{pf u1 action}
	\langle u_1, f \rangle 
	=  \left(\vartheta _1-2\right)^{-1} \left(2 \vartheta _2-1\right)^{-1}
	\langle u_0 , - ( \phi f)' - \frac{2 \left(\vartheta _1-1\right) \left(2 \vartheta _2-3\right) }{\gamma _1}x^2 f \rangle. 
\end{equation}
Taking into consideration \eqref{pf: int rep u0}, we then have 
$$\begin{multlined}
	\langle u_1, f \rangle 
	=
	  - \left(\vartheta _1-2\right)^{-1} \left(2 \vartheta _2-1\right)^{-1} \\
	\times\int_{\mathcal{C}}  \mathcal{W}_0(x)  
	\left(\left( \phi(x) f(x)\right)' - \frac{2 \left(\vartheta _1-1\right) \left(2 \vartheta _2-3 \right) }{\gamma _1}x^2 f(x) \right)\, \dd x. 
\end{multlined}
$$
We perform integration by parts on the first term of the integral, to obtain 
$$\begin{multlined}
 	 \left(\vartheta _1-2\right) \left(2 \vartheta _2-1\right)  \langle  u_1, f \rangle \\ 
  \begin{array}{l}
 	=\displaystyle\sum_{j\in{1,2}}\left( \Big.  \phi(x) \mathcal{W}_0(x)f(x) \Big|_{b\omega^j}^b\right)\\[0.4cm]
	\displaystyle + \int_{\mathcal{C}}   f(x) 
	\left(  \phi(x)   \mathcal{W}_0'(x) - \frac{2 \left(\vartheta _1-1\right) \left(2 \vartheta _2-3 \right) }{\gamma _1}x^2\mathcal{W}_0(x) \right)\, \dd x. 
	\end{array}
\end{multlined}
$$
The vanishing conditions \eqref{Sym cond for weights U0} at the end points of the contour readily imply that the integrated term vanishes identically for every polynomial $f$ considered. As a consequence, a weight function associated with $u_1$ corresponds to 
$$\begin{multlined}
	\mathcal{W}_1 (x ) 
	= 
	\left(\vartheta _1-2\right)^{-1} \left(2 \vartheta _2-1\right)^{-1}
	\left(  \phi(x)   \mathcal{U}'(x) - \frac{2 \left(\vartheta _1-1\right) \left(2 \vartheta _2-3 \right) }{\gamma _1}x^2\mathcal{W}_0(x) \right) 
	\\
	+ \lambda_1 g_1(x),  
\end{multlined}$$
where $\lambda_1$ represents a complex constant (possibly zero) and $g_1$ a function representing the null linear functional. 
By construction, this choice for the weight function guarantees that \eqref{pf: int u1} is well defined and, in particular, all the moments satisfy the threefold symmetry property described in Proposition \ref{prop:3fold sym}.
So, we conclude that \eqref{int rep for uk classical} also holds for $k=1$ with $	\mathcal{U}_1$ given by \eqref{weight eq U1 gen} when  $\vartheta_1\neq2$. 

If $\vartheta_1=2$, then using an entirely analogous approach we deduce \eqref{weight eq U1 2}  from  \eqref{u1 2}. 
\end{proof}

\begin{remark}\label{rem: weightsQn} An alternative to the representation \eqref{int rep for uk classical} stated in Theorem \ref{thm:int rep classical} is 
$$
	\langle u_k,f \rangle = \int_{\mathcal{C}} f(x) \mathcal{W}_k(x)\, \dd x, \qquad (k=0,1),
$$
where $\mathcal{W}_k(x)$ is given in \eqref{pf: W0}. 

The weight functions $\mathcal{W}_0$ and $\mathcal{W}_1$ are {\it threefold symmetric} in the sense that they satisfy the following rotational invariant property: 
$$ 
	\omega^j \mathcal{W}_{k}(\omega^j x) = \mathcal{W}_{k}(x), \qquad j=0,\pm 1,\pm2, \ldots , \ \text{for each }\ k=0,1. 
$$ 
\end{remark}

We consider the following {\it equivalence relation} between two polynomial sequences $\{P_n\}_{n\geqslant 0}$ and $\{B_n\}_{n\geqslant 0}$: 
\begin{equation}\label{equiv rel}
	\{P_n\}_{n\geqslant 0} \ \sim \ \{B_n\}_{n\geqslant 0}
	\ \text{iff} \
		\exists \: a\in\mathbb{C}\backslash\{0\} ,\, b\in\mathbb{C} \ \text{such that } \  
	B_n(x) = a^{-n} P_n(ax+b), 
\end{equation}
for all  $n\geqslant 0$.
In this case, if  $\{P_n\}_{n\geqslant 0}$ satisfies \eqref{rec rel 2OPS} then $\{B_n\}_{n\geqslant 0}$ satisfies 
$$
		B_{n+1}(x) = \left(x-\frac{\beta_n-b}{a}\right) B_n(x) - \frac{\alpha_n}{a^2}B_{n-1}(x) - \frac{\gamma_{n-1}}{a^3} B_{n-2}(x) , \qquad n\geqslant 1,  
$$
with initial conditions $B_{-2}(x) = B_{-1}(x) =0 $ and $B_0(x)=1$.

Observe that $\vartheta_n$ is a solution of a Riccati equation \eqref{Ricati eq} for which $\vartheta_n=1$ is a trivial solution. Depending on the initial conditions 
$\vartheta_1$ and $\vartheta_2$, there are four sets of independent solutions which in fact give rise to four equivalence classes of the threefold symmetric 2-Hahn-classical polynomials. In other words, up to a linear transformation of the variable, there are at most four distinct families of threefold symmetric 2-orthogonal Hahn-classical polynomials, which we single out: 
\begin{itemize}
\item[] {\bf Case A}: $\vartheta_1 =1= \vartheta_2$. This implies that $\vartheta_n =1 $ for all $n\geqslant 0$.

\item[]  {\bf Case $\textbf{B}_1$}: $\vartheta_1 \neq1$ but $ \vartheta_2=1 $ so that by setting $\vartheta_1 = \frac{\mu+2}{\mu+1}$ it follows that 
$$
	\vartheta_{2n-1} = \frac{n+\mu+1}{n+\mu} \quad \text{and}\quad \vartheta_{2n} =  1\ , \qquad n\geqslant 1. 
$$

\item[] {\bf Case $\textbf{B}_2$}: $\vartheta_1 =1$ but $ \vartheta_2\neq1 $ so that by setting $\vartheta_2 = \frac{\rho +2}{\rho+1}$ it follows that
$$
	\vartheta_{2n-1} = 1 \quad \text{and}\quad \vartheta_{2n} =  \frac{n+\rho+1}{n+\rho} \ , \qquad n\geqslant 1. 
$$

\item[] {\bf Case C}: $\vartheta_1 \neq1$ and $ \vartheta_2\neq 1 $ and hence by setting $\vartheta_1 = \frac{\mu+2}{\mu+1}$ and $\vartheta_2 = \frac{\rho +2}{\rho+1}$ it follows that 
$$
	\vartheta_{2n-1} = \frac{n+\mu+1}{n+\mu} \quad \text{and}\quad \vartheta_{2n} =  \frac{n+\rho+1}{n+\rho} \ , \qquad n\geqslant 1. 
$$
\end{itemize}

All these cases were highlighted in \cite{DM92}, where expressions for the recurrence coefficients for the three components of the cubic decomposition of each of these threefold symmetric Hahn-classical polynomials were deduced. Only some of these polynomials were studied in detail: those in case A in \cite{Douak96} and a few subcases of Case C in \cite{DM97 I,DM97 II}. The representations for the orthogonality measures that we describe in the next sections are new. The support of these orthogonality measures contain all the zeros of the polynomial sequences. Even for particular cases that already appeared in the literature, the integral representations for the orthogonality measures were either not given or given on the positive real line (with oscillating terms), which is only part of the starlike set $S$. 
In the next subsections we fully describe all these cases in detail.  

As observed in \cite{DM92}, the following limiting relations take place 
$$
	\text{case C } \xrightarrow[\ \rho\to\infty\ ]{ } \text{B}_1 \xrightarrow[\mu\to\infty]{} \text{case A },  
$$
and also 
$$
	\text{case C } \xrightarrow[\ \mu\to\infty\ ]{ } \text{B}_2 \xrightarrow[\ \rho\to\infty\ ]{ }  \text{case A }. 
$$
It turns out that cases $\text{B}_1$ and $\text{B}_2$ are related to each other by differentiation, as explained in Section \ref{subs: caseB1} and Section \ref{subs: caseB2}.

\subsection{Case A}\label{subs: caseA} ~ 

In the light of Theorem \ref{th: gammas}, for this choice of initial conditions one has $\vartheta_n=1$ for all integers $n\geqslant 1$, so that the $\gamma$-recurrence coefficients are  given by 
$
	 \gamma_{n+1}
	=(n+1)(n+2)\frac{\gamma_1}{2}  
$
 and  
$\widetilde{\gamma}_{n+1} = \gamma_{n+1}$ for $ n\geqslant 0$. As a consequence, $Q_n(x) = P_n(x)$ for all $n\geqslant0$ and, for this reason, the $2$-\OPS \ $\{P_n\}_{n\geqslant 0}$ is an {\it Appell sequence}.

Since the 2-orthogonality property is invariant under any linear transformation, we can set $\gamma_1=2$, and, with this choice, recalling \eqref{sym gammas}-\eqref{gamma tilde} it follows  that
$$ 
	\gamma_{n+1} = \widetilde{\gamma}_{n+1}=  (n+2)(n+1), \qquad n\geqslant0, 
$$ 
so that 
\begin{equation}\label{A: rec rel}
\begin{array}{l}
	P_{n+1}(x) = xP_n(x) -  n(n-1)P_{n-2}(x),  \qquad  n\geqslant 2,\\
	P_0(x)=1, \ P_1(x) =x \ \text{ and } \ P_2(x) = x^2. 
\end{array}
\end{equation}

\begin{figure}[th] 
\centerline{\psfig{file=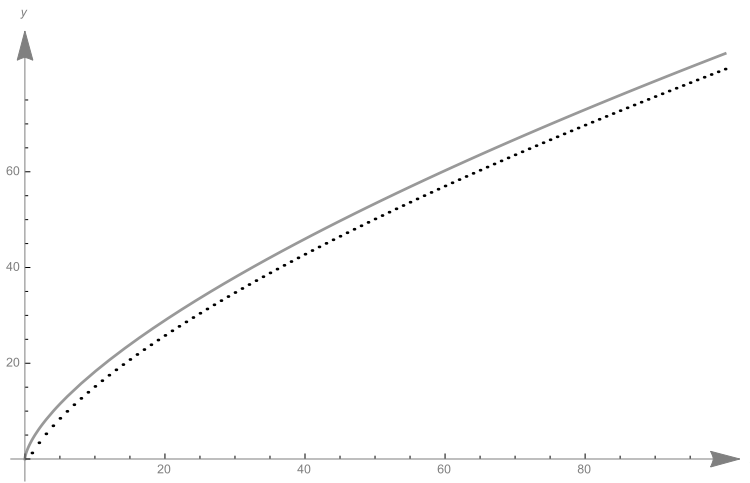,width=10cm}} 
\caption{Case A: Plot of the largest zero in absolute value of $P_{3n}$  against the curve $y=\frac{3^{5/3}}{2^{2/3}}x^{2/3}$.}
\label{fig: largest zero Case A}
\end{figure}

Figure \ref{fig: largest zero Case A} illustrates the behavior of the largest zero in absolute value (plot generated in {\it Mathematica}): 
bounded from above by the curve $y=\frac{3}{2^{2/3}} n^{2/3}$ suggested by 
Theorem \ref{thm: asym zero}.

Regarding the integral representation of the corresponding orthogonality measures, we have: 

\begin{proposition} The threefold symmetric polynomial sequence $\{P_n\}_{n\geqslant 0}$ defined by the recurrence relation \eqref{A: rec rel} is 2-orthogonal with respect to $(u_0,u_1)$ admitting the integral representation  \eqref{int rep for uk classical}, where 
$$
	\mathcal{U}_0(x) = \mathrm{Ai}(  x)
	\ \text{ and } \ 
	\mathcal{U}_1(x) = - \mathrm{Ai}'(  x), 
$$
and $b=+\infty$. Moreover, the sequence $\{Q_n(x)\coloneqq\frac{1}{n+1}P_{n+1}'(x)\}_{n\geqslant 0}$ coincides with $\{P_n\}_{n\geqslant 0}$ ({\it i.e.}, $\{P_n\}_{n\geqslant 0}$ is an Appell sequence). 
\end{proposition}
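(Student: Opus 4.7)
The plan is to specialize Theorem~\ref{thm:int rep classical} to the Case A parameters $\vartheta_1=\vartheta_2=1$ and $\gamma_1=2$, the latter being a normalization fixed by the affine equivalence \eqref{equiv rel}. Substituting into Theorem~\ref{th: gammas}, formula \eqref{vartheta gen exp} collapses to $\vartheta_n\equiv 1$ for all $n$, so \eqref{sym gammas} reduces to $\gamma_{n+2}=\frac{n+3}{n+1}\gamma_{n+1}$; iterating with $\gamma_1=2$ yields $\gamma_{n+1}=(n+1)(n+2)$, which is exactly the recurrence \eqref{A: rec rel}. Formula \eqref{gamma tilde} then gives $\widetilde{\gamma}_n=\gamma_n=n(n+1)$, so the coefficient in \eqref{Pn to Qn} reduces to $(n+1)\gamma_{n+2}-(n+3)\widetilde{\gamma}_{n+1}=0$, forcing $P_{n+3}=Q_{n+3}$; together with the matching initial values $P_j=Q_j=x^j$ for $j=0,1,2$ this would establish the Appell property $Q_n=P_n$ for all $n$.

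For the weight functions, the same substitution gives $\phi(x)\equiv 1$ in \eqref{phi}, while every coefficient in \eqref{weight eq U0} vanishes except the one proportional to $(\vartheta_1-2)x=-x$, so (with $\lambda_0=0$) that equation reduces to the Airy equation
$$\mathcal{U}_0''(x)-x\,\mathcal{U}_0(x)=0.$$
I would take $\mathcal{U}_0(x)=\mathrm{Ai}(x)$, the unique (up to scalar) solution decaying at $+\infty$. Relation \eqref{weight eq U1 gen} then simplifies, because its second term carries the factor $\vartheta_1-1=0$, to $-\mathcal{U}_1(x)=\mathcal{U}_0'(x)$, so $\mathcal{U}_1(x)=-\mathrm{Ai}'(x)$ as claimed.

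To complete the argument I would verify the remaining hypotheses of Theorem~\ref{thm:int rep classical}. The classical asymptotics
$$\mathrm{Ai}(x)\sim\frac{1}{2\sqrt{\pi}}\,x^{-1/4}e^{-\frac{2}{3}x^{3/2}}, \qquad \mathrm{Ai}'(x)\sim -\frac{1}{2\sqrt{\pi}}\,x^{1/4}e^{-\frac{2}{3}x^{3/2}}\quad(x\to+\infty),$$
imply that $f(x)\,\mathrm{Ai}^{(l)}(x)\to 0$ for every polynomial $f$ and $l\in\{0,1\}$, which is exactly \eqref{Sym cond for weights U0} with $b=+\infty$. The growth $\gamma_n\sim n^2$ (so $\alpha=2$) is compatible through Theorem~\ref{thm: asym zero} with zeros on the unbounded starlike support. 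The one analytic point requiring care will be the selection of $\mathrm{Ai}$ rather than $\mathrm{Bi}$ from the two-dimensional solution space of the Airy equation: this is forced precisely by the polynomial-times-weight decay at $+\infty$, since $\mathrm{Bi}(x)$ grows like $e^{+\frac{2}{3}x^{3/2}}$. Once this choice is justified, Theorem~\ref{thm:int rep classical} delivers the integral representation \eqref{int rep for uk classical}.
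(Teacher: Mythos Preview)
Your proposal is correct and follows essentially the same route as the paper: specialize the parameters to obtain the Airy equation for $\mathcal{U}_0$, discard the $\mathrm{Bi}$ solution by the decay requirement \eqref{Sym cond for weights U0}, and read off $\mathcal{U}_1=-\mathrm{Ai}'$ from the simplified relation \eqref{weight eq U1 gen}. The paper additionally writes down the moment sequences \eqref{A: moments} and invokes the explicit integral $\int_0^\infty x^n\,\mathrm{Ai}(x)\,\dd x$ to pin down the scalar multiple of $\mathrm{Ai}$; you acknowledge the ``up to scalar'' ambiguity but do not resolve it, so that normalization check is the one small piece you should add.
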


Here and in what follows $\text{Ai}$ and $\text{Bi}$ are the \textit{Airy functions} of the first and second kind (see \cite[\S 9]{DLMF}), respectively. 

\begin{proof} 
Under the assumptions, by virtue of Theorem \ref{th: gammas} and Theorem \ref{thm: sym rc}, this polynomial sequence is $2$-orthogonal with respect to $(u_0,u_1)$  satisfying  \eqref{diff eq u0}-\eqref{u1 via u0} which reads as 
$$ 
	\left\{\begin{array}{l}
		   u_0'' - x\ u_0=0 ,
		\vspace{0.2cm}
		\\		
	u_1  =	- u_0',  
	\end{array}\right.
$$ 
from which we conclude that the corresponding sequence of the moments $\{(u_0)_n\}_{n\geqslant0}$ and  $\{(u_1)_n\}_{n\geqslant0}$ satisfy
$$
	 (u_0)_{n+3} = 	  (n+1)(n+2)  (u_0)_{n}
	\quad  \text{ and }\quad 
	(u_1)_n = n (u_0)_{n-1} ,
$$
with initial conditions $(u_0)_0=1$ and $(u_0)_1=(u_0)_2=(u_1)_0=0$. This implies 
\begin{equation}
\label{A: moments}
\begin{array}{lcll}
	(u_0)_{3n} &=& \frac{(3n)!}{3^{n} (n!)} & \text{and }\quad 
	(u_0)_{3n+1} =(u_0)_{3n+2} =0 ,  \vspace{0.2cm}\\ 
	(u_1)_{3n+1} &=&  \frac{(3n+1)!}{3^{n} (n!)} & \text{and }\quad 
	(u_1)_{3n} =(u_1)_{3n+2} =0 , \qquad n\geqslant 0.
\end{array}
\end{equation}
According to Theorem \ref{thm:int rep classical},  $u_0$ and $u_1$ admit the representation  \eqref{int rep for uk classical}  provided that there exist two twice differentiable functions $\mathcal{U}_0$ and $\mathcal{U}_1$ from $\mathbb{R}$  to $\mathbb{R} $   that are solutions to 
\begin{equation} \label{A1 eq W}
	\left\{\begin{array}{l}
		\mathcal{U}_0'' (x) 
		 	 - x \,\mathcal{U}_0(x) = \lambda_0 g_0(x),
		\vspace{0.2cm}
		\\		
	-\mathcal{U}_1(x)
	=	
		 \mathcal{U}_0'(x) + \lambda_1 g_1(x) \ ,
	\end{array}\right. 
\end{equation}
and satisfying
\begin{eqnarray} \label{A: cond for weights U0}
	&&	\lim_{x\to {\omega^j}\infty} f(x) \frac{\dd^l }{\dd x^l}\mathcal{U}_0(x) = 0  
		, \quad\text{for }   j,l\in\{0,1,2\} \ \text{ and } \  f\in\mathcal{P},\\
	&&	\int_{0}^{\infty} \!\! \mathcal{U}_0(x)\, \dd x=1 , \notag
\end{eqnarray}
where $\lambda_k \in \mathbb{C}$ (possibly zero) and $g_k(x)\neq0$ are rapidly decreasing functions, locally integrable, representing the null functional ($k=0,1$). For $\lambda_0=0$, the general solution of the first equation in \eqref{A1 eq W} can be written as
$$
	y(x) = c_1 \text{Ai}(  x) + c_2 \text{Bi}(  x) , 
$$
for arbitrary constants $c_1,c_2$. Observe that \eqref{A: cond for weights U0} is realized if we take $c_2=0$ (see \cite[(9.7.5)-(9.7.8)]{DLMF}). Since 
 \cite[(9.10.17)]{DLMF}
$$
	 \int_{ 0}^{+\infty} x^n \text{Ai}(x) \, \dd x = \frac{\Gamma(n+1)}{3^{\frac{n}{3}+1} \Gamma(\frac{n}{3}+1)} < +\infty, \qquad \text{for all } n\geqslant 0, 
$$
the result now follows if we take $c_1=1$ and $\lambda_1=0$. 
\end{proof}

\medskip

\subsubsection{The differential equation} 

Under these assumptions \eqref{3rd order eqdiff Sym} becomes 
$$  
	-  P_{n}''' (x) + x P_{n}'(x) =n P_{n}(x), \qquad n\geqslant 0, 
$$ 
whose general solution is given by 
\begin{eqnarray*}
	P_n(x) 
	&=& c_0 \; \pFq{1}{2}{-\frac{n}{3}}{\frac{1}{3},\frac{2}{3}}{\frac{x^3}{9}}
	+ c_1\; x  \; \pFq{1}{2}{-\frac{n-1}{3}}{\frac{2}{3},\frac{4}{3}}{\frac{x^3}{9}}
	+ c_2 \; x^2 \; \pFq{1}{2}{-\frac{n-2}{3}}{\frac{4}{3},\frac{5}{3}}{\frac{x^3}{9}}, 
\end{eqnarray*}
with integration constants $c_0,\ c_1$ and $c_2$. Observe that for each $n$ there is one polynomial solution and we have 
\begin{eqnarray*}
	& P_{3n}(x) = P_n^{[0]} (x^3) & \text{with} \quad  P_n^{[0]} (x) = (-9)^n (1/3)_n (2/3)_n\;  \pFq{1}{2}{-n}{\frac{1}{3},\frac{2}{3}}{\frac{x}{9}} , \\
	& P_{3n+1}(x)= x \ P_n^{[1]} (x^3) & \text{with} \quad  P_n^{[1]} (x)  = (-9)^n (2/3)_n (4/3)_n\;  \pFq{1}{2}{-n}{\frac{2}{3},\frac{4}{3}}{\frac{x}{9}} , \\
	& P_{3n+2}(x)= x^2 \ P_n^{[2]} (x^3) & \text{with} \quad  P_n^{[2]} (x)  = (-9)^n (4/3)_n (5/3)_n\;  \pFq{1}{2}{-n}{\frac{4}{3},\frac{5}{3}}{\frac{x}{9}}. 
\end{eqnarray*}

\subsubsection{The cubic decomposition}

In \cite[\S 5]{BCD00} the cubic decomposition of an Appell 2-orthogonal sequence has been highlighted. This also happens to be threefold symmetric, and the corresponding polynomials  were called Hermite-type 2-orthogonal polynomials. The three $2$-\OPS s  $\{P_n^{[j]}\}_{n\geqslant 0}$ ($j=0,1,2$) in the cubic decomposition  of $\{P_n\}_{n\geqslant 0}$ are $2$-orthogonal with respect to weights involving modified Bessel functions of the second kind, studied in \cite{BCD00, VAY}. 

Following Lemma \ref{lem: 2sym components rec rel}, each of the three polynomial sequences $\{P_n^{[j]} \}_{n\geqslant 0}$, with  $j\in\{0,1,2\}$, is $2$-orthogonal and satisfies the third order recurrence relation \eqref{CD rec rel}, where  
$$\begin{array}{l}
	\beta_n^{[j]} =  3 \left(j^2+6 j n+j+9 n^2\right)+9 n+2, \qquad n\geqslant 0,\\
	\alpha_n^{[j]} = 3 (j+3 n-2) (j+3 n-1)^2 (j+3 n)
			, \qquad n\geqslant 1, \\
	\gamma_n^{[j]} = (j+3 n-2) (j+3 n-1) (j+3 n) (j+3 n+1) (j+3 n+2) (j+3 n+3) ,  \qquad n\geqslant 2.  \\
	\end{array}
$$
It should be noted that the Appell polynomials $\{P_n\}_{n\geqslant 0}$ and the components in the cubic decomposition were treated in \cite{BCD00} under a different normalisation, namely by considering $2/\gamma_1 =9$.  However the integral representation provided in  \cite{BCD00} is supported on the positive real axis, and therefore different from the one given here. In \cite[Cor. 5,7]{LOHerm} the authors have also studied these polynomials, where the focus was put on algebraic properties, including generating functions. 

\begin{remark} This Appell  sequence $\{P_n\}_{n\geqslant 0}$  has already appeared in the literature in other contexts, often not recognized as $2$-\OPS s. For instance, it is linked to the Vorob'ev-Yablonski polynomials associated with rational solutions of the second Painlev\'e equations \cite{CM03}. 
Furthermore, in \cite{Widder} Widder studied the so-called Airy transform defined as follows
$$
	u(x,t)= \int_{-\infty}^{+\infty} f(y) \frac{1}{(3t)^{1/3}} \text{Ai}\left(\frac{y-x}{3t^{1/3}}\right)\, \dd y.
$$
Up to a scaling, it maps the sequence of monomials to the Appell $2$-\OPS \ $\{P_n\}_{n\geqslant 0}$ \cite[\S 8]{Widder}. For this reason, in \cite[\S 4.2.3]{SoaVal} (and also \cite{BDS}) they have been referred to as "Airy polynomials". 
\end{remark}

Figure \ref{fig: zeros of Case A} corroborates the statement in Proposition \ref{prop:interlace}: the zeros on the positive real axis of three consecutive polynomials interlace and all the other non-zero zeros are rotations of $2\pi/3$ of them.

\begin{figure}[th] 
\centerline{\psfig{file=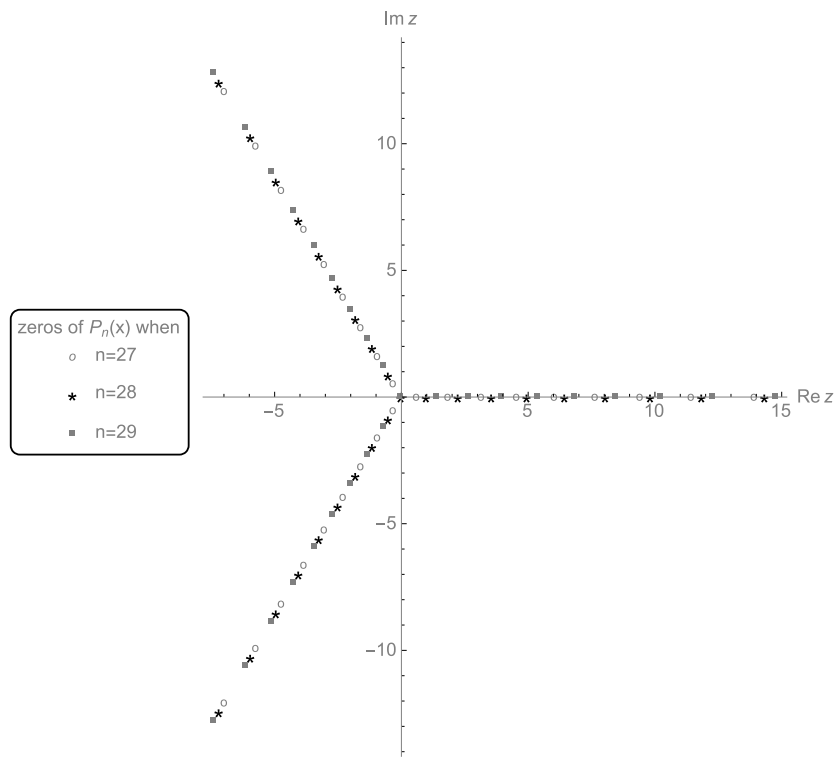,width=11cm}} 
\caption{Case A: Zeros of $P_{27}$ (empty circles), $P_{28}$ (stars) and $P_{29}$ (solid squares). }
\label{fig: zeros of Case A}
\end{figure}

\pagebreak

\subsection{Case $B_1$}\label{subs: caseB1} ~ 

In this case, $\vartheta_{2n}=1$ and $\vartheta_{2n-1} = \frac{n+\mu+1}{n+\mu}$, under the constraint that $\mu\neq -n$, for all $n\geqslant1$. Therefore, the threefold symmetric sequence $\{P_n(\cdot;\mu;\gamma_1)\}_{n\geqslant 0}$ satisfies the recurrence relation \eqref{rec rel 3sym}, where 
\begin{align*}
& 	\gamma_{2n +1} = \frac{(n+1)(2n+1) (\mu+2)}{(3n+\mu+2)} \gamma_1, \\
&	\gamma_{2n +2 } =  \frac{(n+1)(2n+3)(n+\mu+1) (\mu+2)}{(3n+\mu+2)(3n+\mu+5)} \gamma_1, \quad n\geqslant 0.   
\end{align*}
Observe that $\gamma_1$ is a mere scaling factor and, from this point forth, we set $\gamma_1 = \frac{2}{3(\mu+2)}$. The discussion is therefore about on 
$\{P_n(\cdot;\mu)\coloneqq P_n(\cdot;\mu,\frac{2}{3(\mu+2)})\}_{n\geqslant 0}$. The general case  $\{P_n(\cdot;\mu;\gamma_1)\}_{n\geqslant 0}$ can be deduced after a linear transformation of the variable: 
$$
	P_n(x;\mu;\gamma_1) =a^{-n} P_n\left(ax;\mu\right), \quad \text{with } \ a= \left(\tfrac{2}{3(\mu+2)\gamma_1}\right)^{1/3},  \qquad n\geqslant 0.
$$
The two sequences are equivalent, under \eqref{equiv rel}.

Hence, $\{P_n(\cdot;\mu)\}_{n\geqslant 0}$ satisfies  \eqref{rec rel 3sym} with $\gamma_{n+1}\coloneqq\gamma_{n+1}(\mu)$ given by 
\begin{equation}\label{B1: gamma}
	  \gamma_{2n +1} =\frac{2}{3 } \frac{(n+1)(2n+1)  }{(3n+\mu+2)}, \quad
	  \gamma_{2n +2} =  \frac{2}{3 }\frac{(n+1)(2n+3)(n+\mu+1) }{(3n+\mu+2)(3n+\mu+5)}  , \qquad n\geqslant 0,
\end{equation}
and it is $2$-orthogonal for $(u_0,u_1)$  for which 
\begin{equation}\label{Sym eq for u0 and u1 B1}
	  \frac{1}{3}  u_0'' + x^2 u_0'- (\mu -2) x u_0  = 0
	  \quad \text{and} \quad 
	\begin{cases}
		u_1
			=	-\frac{ (\mu +2)}{  \mu }  \Big( u_0' + 3x^2 u_0\Big)
			& \text{if}\quad \mu\neq0 , \\
		x u_1' = 2 u_0'& \text{if}\quad \mu=0. 
	\end{cases}
\end{equation}
Consequently, 
\begin{equation}\label{B mom u0}
	(u_0)_{3n} 
		= \frac{(\frac{1}{3})_n (\frac{2}{3})_n  }{    \left(\frac{\mu +2}{3}\right)_n} 
	\quad \text{and} \quad 
	\ (u_0)_{3n+1} =(u_0)_{3n+2} =0 , 
\end{equation}
and 
\begin{equation}\label{B mom u1}
	\left(u_1\right)_{3n+1} 
		= \frac{\left(\frac{2}{3}\right)_n \left(\frac{4}{3}\right)_n}{\left(\frac{\mu+5}{3}\right)_n}
	\quad \text{and} \quad 
	\ (u_1)_{3n} =(u_1)_{3n+2} =0. 
\end{equation}
Observe that in order to have $\gamma_{n+1}>0$ for all $n\geqslant 0$, the constraint $\mu>-1$ needs to be imposed. 

Particular choices of the parameter $\mu$ make the $\gamma$-coefficients linear functions in $n$, namely: 
$$
	\text{for } \ \mu=-1/2: \quad \gamma_{2n }(-1/2) = \frac{4n}{27}, \quad \gamma_{2n +1}(-1/2)= \frac{4(n+1)}{9},
$$
whilst 
$$
	\text{for } \ \mu=1: \quad \gamma_{2n } (1)= \frac{2(2n+1)}{27}, \quad \gamma_{2n +1}(1)= \frac{2(2n+1)}{9}.
$$

For each $\mu>-1$, we have 
$$ 
	\gamma_{2n+1}(\mu) = \frac{4n}{9} +  \frac{2(5-2\mu)}{27}\mathcal{O}(1)
		\quad \text{whilst} \quad 
	\gamma_{2n+2}(\mu) = \frac{4n}{27}  +  \frac{2(7+2\mu)}{81}\mathcal{O}(1) \qquad  n\to+\infty.
$$ 
In the light of Theorem \ref{thm: asym zero},   an upper bound for the  absolute value of the largest zero $|x_{n,n}|$ of $P_n$ is given by \eqref{larger zero} with $c=\frac{4}{9}$ and $\alpha=1$ so that we obtain 
$$ 
		|x_{n,n}| \leqslant  3^{1/3} n^{1/3} + {o}(n^{1/3}), \qquad n\geqslant 1. 
$$ 
The accuracy of the result is illustrated in Figure \ref{fig: largest zeroB1}, where we have only plotted the positive zeros of $P_{3n}(x)$, but similar results hold for the zeros of $P_{3n+1}(x)$ and $P_{3n+2}(x)$, which are just rotations of the positive zeros.

\begin{figure}[th] 
\centerline{\psfig{file=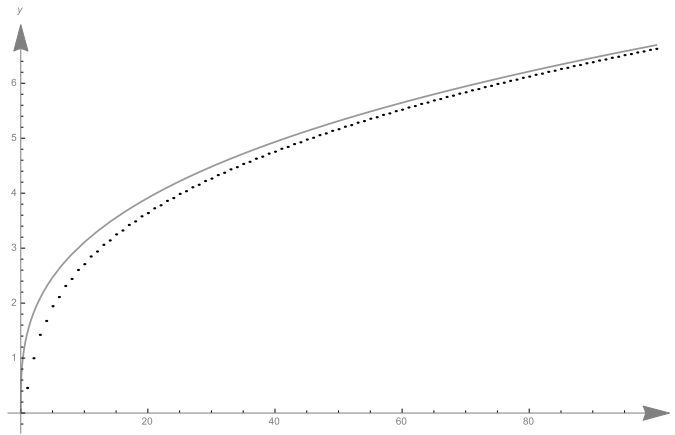,width=9cm}} 
\caption{Plot of of the curve $y=  3^{1/3}  x^{1/3}$ (grey solid line) and the largest zeros in absolute value (black dots) of $P_{3n}(x;3)$ with $n$ ranging from $0$ to $100$.  } 
\label{fig: largest zeroB1}
\end{figure}

The zeros of three consecutive polynomials interlace and lie on the three-starlike set $S$ : Figure \ref{zeros PB1} illustrates this. 
\begin{figure}[th] 
\centerline{\psfig{file=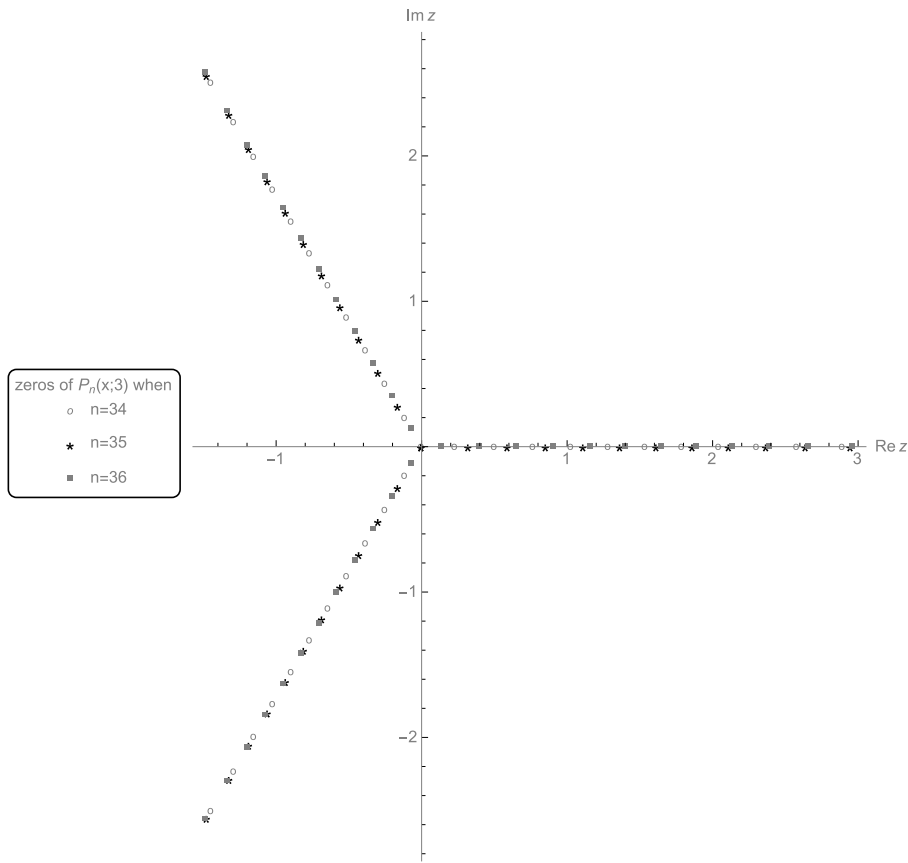,width=12cm}} 
\caption{Zeros of $P_{34}(x;\mu)$ (circle), $P_{35}(x;\mu)$ (star) and $P_{36}(x;\mu)$ (square) with $\mu=3$, where $P_{n}(x;\mu)$ is the $2$-\OPS\ studied in case B1. }
\label{zeros PB1}
\end{figure}

 In the light of Theorem \ref{thm:3sym 2OPS}, the two orthogonality weights can be expressed via the {\it confluent hypergeometric function of the second kind} $\mathbf{U}(a,b;x)$, which  admits the following integral representation \cite[(13.4.4)]{DLMF}
$$ 
	\mathbf{U}(a,b;x) = \frac{1}{\Gamma(a)}\int_{0}^{\infty} t^{a-1} (t+1)^{-a+b-1} \e^{-t x}\,  \dd t ,
$$ 
provided that $\Re(a)>0$ and $|\arg (x) |< \pi/2$, whilst 
$$ 
	\mathbf{U}(0,b;x) = 1,
$$ 
and one has the identity  $\mathbf{U}(a,b;x) = x^{1-b}\mathbf{U}(a-b+1,2-b;x)  $.

\begin{proposition}\label{prop: B1 rep} Let $\mu>-1$. The threefold symmetric polynomial sequence $\{P_n(\cdot:\mu)\}_{n\geqslant 0}$ defined by the recurrence relation \eqref{rec rel 3sym}, with $\gamma_n$ given by \eqref{B1: gamma}, is 2-orthogonal with respect to $(u_0,u_1)$ admitting the integral representation  \eqref{int rep for uk classical} where 
\begin{align}
&	\mathcal{U}_0(x) \coloneqq
	\mathcal{U}_0(x;\mu) 
		\ =\  	\frac{3  \Gamma(\tfrac{\mu+2}{3})}{\Gamma(\tfrac{1}{3})\Gamma(\tfrac{2}{3}) }
			\e^{-x^3} \mathbf{U}(\tfrac{\mu}{3},\tfrac{2}{3};  x^3) ,
	 \label{U0 case B1}\\[2mm]
& 	\label{U1 case B1} 
	\mathcal{U}_1(x)\coloneqq\mathcal{U}_1(x;\mu) 
		\ = \ \frac{9    \Gamma \left(\frac{\mu +5}{3}\right) }
			{\Gamma \left(\frac{1}{3}\right) \Gamma\left(\frac{2}{3}\right)}
			   	x^2 \ e^{-x^3}\ \mathbf{U}\left(\tfrac{\mu }{3}+1,\tfrac{5}{3},x^3\right) ,
\end{align}
and $b=+\infty$, 
where $\mathbf{U}$ represents the Kummer (confluent hypergeometric) function of second kind.
\end{proposition}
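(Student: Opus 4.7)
The plan is to apply Theorem \ref{thm:int rep classical} with the Case $B_1$ parameters $\vartheta_1=\tfrac{\mu+2}{\mu+1}\geqslant 1$ (indeed $>1$ for $\mu>-1$) and $\vartheta_2=1$, so that the hypothesis of that theorem is satisfied. Substituting these values, together with the normalisation $\gamma_1=\tfrac{2}{3(\mu+2)}$, the polynomial $\phi$ collapses to the constant $\phi(x)=\tfrac{\mu+2}{\mu+1}$, and the ODE \eqref{weight eq U0} (with $\lambda_0=0$ and $g_0\equiv 0$) reduces, after clearing the constant factor, to
$$
 \mathcal{U}_0''(x)+3\bigl(x^2\mathcal{U}_0(x)\bigr)'-3\mu x\, \mathcal{U}_0(x)=0,
$$
which is precisely the equation for $u_0$ in \eqref{Sym eq for u0 and u1 B1}. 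The bulk of the proof is then to verify that this ODE is satisfied by $\mathcal{U}_0(x;\mu)=C\,\e^{-x^3}\mathbf{U}(\tfrac{\mu}{3},\tfrac{2}{3};x^3)$ for a suitable constant $C$.

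For this verification I would make the change of variables $t=x^3$ and write $\mathcal{U}_0=C\,\e^{-t}f(t)$ with $f(t)=\mathbf{U}(\tfrac{\mu}{3},\tfrac{2}{3};t)$. A direct computation of $\mathcal{U}_0',\mathcal{U}_0'',(x^2\mathcal{U}_0)'$ via the chain rule reduces the left-hand side of the ODE to $3Cx\,\e^{-x^3}\bigl\{3t\,f''(t)+(\tfrac23\cdot 3-3t)f'(t)-\mu f(t)\bigr\}$, whose curly bracket is exactly three times Kummer's confluent hypergeometric equation $t f''+(b-t)f'-af=0$ with $a=\mu/3$, $b=2/3$; hence it vanishes. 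The boundary conditions \eqref{Sym cond for weights U0} at $x\to+\infty$ follow from the asymptotic $\mathbf{U}(a,b;z)\sim z^{-a}$, so each derivative of $\mathcal{U}_0$ decays as $\e^{-x^3}\times x^{\text{poly}}$ and therefore annihilates any polynomial at infinity; regularity at $x=0$ follows from the limit $\mathbf{U}(a,\tfrac23;z)\to \Gamma(\tfrac13)/\Gamma(a+\tfrac13)$ as $z\to 0^+$, which is finite for $\mu>-1$.

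The normalisation constant $C=\tfrac{3\Gamma((\mu+2)/3)}{\Gamma(1/3)\Gamma(2/3)}$ is fixed by the identity $\int_0^\infty\mathcal{U}_0(x;\mu)\,\dd x=1$: after the substitution $t=x^3$ this becomes $\tfrac{C}{3}\int_0^\infty t^{-2/3}\e^{-t}\mathbf{U}(\tfrac{\mu}{3},\tfrac{2}{3};t)\,\dd t$, which is evaluated by inserting the integral representation of $\mathbf{U}$ and swapping the order of integration (a Beta-function calculation) to obtain $\tfrac{C}{3}\cdot\tfrac{\Gamma(1/3)\Gamma(2/3)}{\Gamma((\mu+2)/3)}$. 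The same formula with $t^{n-2/3}$ in place of $t^{-2/3}$ reproduces the moments $(u_0)_{3n}$ in \eqref{B mom u0}, confirming that this $\mathcal{U}_0$ represents $u_0$. Finally, the expression \eqref{U1 case B1} for $\mathcal{U}_1$ follows by inserting $\mathcal{U}_0$ into \eqref{weight eq U1 gen} when $\mu\neq 0$ (equivalently $\vartheta_1\neq 2$): a short computation gives $\mathcal{U}_0'(x)+3x^2\mathcal{U}_0(x)=3Cx^2\e^{-x^3}f'(x^3)$, and then the differentiation rule $\tfrac{\dd}{\dd z}\mathbf{U}(a,b;z)=-a\,\mathbf{U}(a+1,b+1;z)$ together with the functional identity $(\mu+2)\,\Gamma\!\bigl(\tfrac{\mu+2}{3}\bigr)=3\Gamma\!\bigl(\tfrac{\mu+5}{3}\bigr)$ converts this into \eqref{U1 case B1}. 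The exceptional case $\mu=0$ (where $\vartheta_1=2$) is handled analogously by integrating $x\mathcal{U}_1'=2\mathcal{U}_0'$ as prescribed by \eqref{weight eq U1 2}. The main obstacle I expect is the algebraic bookkeeping in the ODE verification: reducing the expanded derivatives cleanly to Kummer's equation requires the cancellations brought by the specific combination $(\mathcal{U}_0'+3x^2\mathcal{U}_0)$, and all the remaining steps are, by contrast, routine applications of standard formulae for $\mathbf{U}$.
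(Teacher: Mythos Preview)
Your proposal is correct and follows essentially the same route as the paper: invoke Theorem~\ref{thm:int rep classical}, solve the resulting ODE for $\mathcal{U}_0$, fix the normalising constant via the moment integral, and then obtain $\mathcal{U}_1$ from \eqref{weight eq U1 gen}. The only notable stylistic difference is that the paper writes the general solution of the ODE as a combination of two ${}_1F_1$ functions and then selects the combination that equals $\e^{-x^3}\mathbf{U}(\tfrac{\mu}{3},\tfrac{2}{3};x^3)$ via a DLMF connection formula, whereas you verify directly that this particular function satisfies the ODE by reducing to Kummer's equation; both are valid, and your route is arguably cleaner. One small point worth making explicit: the choice $b=+\infty$ is justified in the paper via Theorem~\ref{thm: asym zero} and the linear growth of $\gamma_n$, which you use implicitly but do not state.
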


\begin{proof} Under the assumptions, it follows that $u_0$ and $u_1$ satisfy \eqref{Sym eq for u0 and u1 B1} and $\{P_n(\cdot:\mu)\}_{n\geqslant 0}$ satisfies the recurrence relation \eqref{rec rel 3sym} with $\gamma_n>0$ for all $n\geqslant1$. 
In the light of Theorem \ref{thm:3sym 2OPS}, there exists a function $\mathcal{W}_0$ and a domain $\mathcal{C}$ containing all the zeros of $\{P_n\}_{n\geqslant0}$ so that 
$$
	\langle u_0, f(x) \rangle  = \int_\mathcal{C} f(x) \mathcal{W}_0(x)\, \dd x ,  
$$
is valid for every polynomial $f$. By virtue of Theorem \ref{thm: asym zero} and the asymptotic behavior of $\gamma_n$ for large $n$, the curve that contains all the zeros of $\{P_n\}_{n\geqslant0}$ corresponds to the starlike set $S$ in Fig.~\ref{fig: 3 rays}. 
According to Theorem \ref{thm:int rep classical},  $u_0$ and $u_1$ admit the representation  \eqref{int rep for uk classical},  provided that there exist two twice differentiable functions $\mathcal{U}_0$ and $\mathcal{U}_1$ from $\mathbb{R}$  to $\mathbb{R} $   that are solutions to 
\begin{equation} \label{B1 eq W}
	\begin{cases}
		 \frac{1}{3} \mathcal{U}_0''(x)+ x^2 \mathcal{U}_0'(x) -  (\mu -2) x \mathcal{U}_0(x)   = \lambda_0 g_0(x)
		 & \text{if}\quad \mu>-1,
		\vspace{0.2cm}
		\\		
	\mathcal{U}_1(x)
	=	 \frac{2}{3}   \mathcal{U}_0'(x)+2 x^2 \mathcal{U}_0(x) + \lambda_1 g_1(x) \ 
	& \text{if}\quad \mu\neq 0 \ \text{and} \ \mu>-1,
	\\
	x \mathcal{U}_1^\prime(x) = 2 \mathcal{U}_0^\prime(x) +\lambda_1 g_1(x) 
	& \text{if}\quad \mu=0,
	\end{cases}
\end{equation}
and satisfying
\begin{eqnarray} \label{B: cond for weights U0}
	&&	\lim_{x\to {\omega^j}\infty} f(x) \frac{\dd^l }{\dd x^l}\mathcal{U}_0(x) = 0  
		, \quad\text{for }   j,l\in\{0,1,2\} \ \text{ and } \  f\in\mathcal{P},\\
	&&	\int_{0}^{\infty} \!\! \mathcal{U}_0(x) \dd x=1 , 
	\label{B: cond2 for weights U0} 
\end{eqnarray}
where $\lambda_k \in \mathbb{C}$ (possibly zero) and $g_k\neq0$ are rapidly decreasing functions, locally integrable, representing the null functional ($k=0,1$). For $\lambda_0=0$, the general solution of the first equation in \eqref{B1 eq W} can be written as
\begin{equation}\label{pfB1 gensol}
	y(x) = c_1  \;  \pFq{1}{1}{\frac{2-\mu }{3}}{ \frac{2}{3}}{ -x^3 } +  c_2\;  x \; \pFq{1}{1}{1-\frac{\mu }{3}}{ \frac{4}{3}}{ -x^3 } ,
\end{equation}
with $c_1,c_2$ two arbitrary constants.  Based on  \cite[Eqs. (13.2.39) and (13.2.42)]{DLMF} we have 
$$
	\e^{-x^3} \mathbf{U}\left(\tfrac{\mu}{3},\tfrac{2}{3},x^3\right) = \frac{\Gamma(\frac{1}{3})}{\Gamma(\frac{\mu+1}{3})}  
	\pFq{1}{1}{\frac{2-\mu }{3}}{ \frac{2}{3}}{ -x^3 } 
	+  \frac{\mu\Gamma(-\frac{1}{3})}{3\Gamma(\frac{\mu}{3}+1)}  \; x \; \pFq{1}{1}{1-\frac{\mu }{3}}{ \frac{4}{3}}{ -x^3 } ,
$$
which is valid when $b$ is not an integer. Observe that (see \cite[(13.7.3)]{DLMF})
$$
	G(x;\mu) = \e^{-x^3} \mathbf{U}\left(\tfrac{\mu}{3},\tfrac{2}{3},x^3\right)\sim  \e^{-x^3} x^{-\mu} \sum_{s=0}^{\infty}\frac{{\left(\tfrac{\mu}{3}\right)_{s}}{\left(\tfrac{\mu+1}{3}\right)_{s}}}{s!}(-z)^{-s} ,
$$
and also that \cite[(13.3.27)]{DLMF} 
$$
	\frac{\dd }{\dd x} G(x;\mu) = -3x^2  \e^{-x^3} \mathbf{U}(\tfrac{\mu}{3},\tfrac{5}{3},x^3),
$$
so that, for both $l=0,1$, we obtain $\ds\lim_{x\to \infty} f(x) \frac{\dd^l G(x;\mu) }{\dd x^l} = 0 $ for every polynomial $f$. 

According to \cite[Eq. (7.621.6)]{GR}, the following identity 
\begin{equation}\label{mellin Kummer}
	\int_0^{+\infty}  t^{b-1} \mathbf{U}\left(a,c; t\right) \e^{-t} \, \dd t 
	= \frac{\Gamma(b)\Gamma(b-c+1)}{\Gamma(a+b-c+1)}, 
\end{equation}
holds, provided that  $\Re(b)>\max(0, \Re(c)-1)$. Therefore
$$	
	\int_{0}^{+\infty} x^n \e^{-x^3} \mathbf{U}(\tfrac{\mu}{3},\tfrac{2}{3}; x^3)\,  \dd x
	=\frac{1}{3 } \int_{0}^{+\infty}  t^{\tfrac{n+1}{3}-1}  \ \e^{-t} \mathbf{U}(\tfrac{\mu}{3},\tfrac{2}{3};t) \, \dd t
	= \frac{1}{3 } \frac{\Gamma(\tfrac{n+1}{3})\Gamma(\tfrac{n+2}{3} )}{\Gamma(\frac{n+\mu+2}{3})} , 
$$
for $n\geqslant 0$, which, upon the substitution $n\rightarrow 3n$, becomes 
$$	
	\int_{0}^{+\infty} x^{3n} \e^{-x^3} \mathbf{U}(\tfrac{\mu}{3},\tfrac{2}{3};  x^3)\,  \dd x
	 = \frac{\Gamma(\tfrac{1}{3})\Gamma(\tfrac{2}{3}) }{3  \Gamma(\tfrac{\mu+2}{3})}  
	\ \frac{ (\tfrac{1}{3})_n (\tfrac{2}{3} )_n}{  (\tfrac{\mu+2}{3})_n }   , \qquad n\geqslant 0. 
$$

Consequently, the particular choices of 
$$
	c_1 = \frac{3  \Gamma(\tfrac{\mu+2}{3})}{\Gamma(\tfrac{2}{3})\Gamma(\frac{\mu+1}{3})} 
	\quad \text{and}\quad 
	c_2 
	 = \frac{-3\mu  \Gamma(\tfrac{\mu+2}{3}) }{\Gamma(\frac{\mu}{3}+1)\Gamma(\tfrac{1}{3}) }
$$
in the general solution \eqref{pfB1 gensol} gives the function 
$
	\mathcal{U}_0(x;\mu)
$ in \eqref{U0 case B1}, which 
meets the requirements \eqref{B: cond for weights U0}-\eqref{B: cond2 for weights U0}. Furthermore, we have 
$$ 
	(u_0)_{n} = \left(\int_{\Gamma_0} \dd x- \omega^2  \int_{\Gamma_1}\dd x-\omega  \int_{\Gamma_2} \dd x\right)\left( x^n \ \frac{ \Gamma(\tfrac{\mu+2}{3})}{\Gamma(\tfrac{1}{3})\Gamma(\tfrac{2}{3}) }\e^{-x^3} \mathbf{U}(\tfrac{\mu}{3},\tfrac{2}{3};  x^3)\right),
$$ 
which matches with  \eqref{B mom u0}. 

Now, for the integral representation of the linear functional $u_1$ we consider \eqref{B1 eq W} with $\lambda_1=0$ 
and this gives 
$$ \begin{cases}
\begin{array}{lcl}
	\mathcal{U}_1(x;\mu) 
			& = & \ds -\frac{ (\mu +2)}{  \mu }  \Big( \mathcal{U}_0' (x)+ 3x^2 \mathcal{U}_0(x)\Big) \\
			&=& \ds  \frac{9    \Gamma \left(\frac{\mu +5}{3}\right) }
			{\Gamma \left(\frac{1}{3}\right) \Gamma\left(\frac{2}{3}\right)}
			   	x^2 \ e^{-x^3}\ \mathbf{U}\left(\tfrac{\mu }{3}+1,\tfrac{5}{3},x^3\right)\\[0.8cm]
	\end{array}
				& \text{if } \quad \mu\neq 0, \\
	 \mathcal{U}_1(x;\mu)  = \frac{9    \Gamma \left(\frac{5}{3}\right) }
			{\Gamma \left(\frac{1}{3}\right) \Gamma\left(\frac{2}{3}\right)} \Gamma\left( \frac{2}{3},x^3\right)
	& \text{if}\quad \mu=0,
\end{cases}$$ 
where, for the first line of the latter identity we have used \cite[(13.3.27)]{DLMF} as well as the contiguous relation \cite[(13.3.10)]{DLMF}. 
The second line of the latter identity corresponds to the first line after taking $\mu=0$ because of \cite[Eq. (13.6.6)]{DLMF}, so that we obtain \eqref{U1 case B1}. 
Here, $\Gamma(\alpha,z)$ is the incomplete Gamma function: $\ds\Gamma(\alpha,z) = \int_z^{+\infty} t^{\alpha-1}\e^{-t}\, \dd t \ $ provided that $\alpha>0$. 
Moreover, based on \eqref{mellin Kummer}, we have 
$$
	\int_{0}^{+\infty} x^{3n+1} \mathcal{U}_1(x;\mu)\, \dd x = \frac{(2/3)_n (4/3)_n}{\left(\frac{\mu+5}{3}\right)_n}, 
	\qquad  \quad \mu>-1,
$$
which coincides with the moments of $u_1$ given in \eqref{B mom u1}.
\end{proof}

\subsubsection{Particular cases} 

For the following choices of the parameter $\mu$ the expressions  \eqref{U0 case B1}-\eqref{U1 case B1}  relate to other functions. Namely we have: 

\begin{itemize}
\item $\mu=-1/2$, then 
\begin{eqnarray*}
&&	\mathcal{U}_0(x;-1/2) =  \frac{3}{2} \sqrt{\frac{3}{\pi }} e^{-x^3} U\left(-\frac{1}{6},\frac{2}{3},x^3\right)
\quad \text{and} \quad 
 	\mathcal{U}_1(x;-1/2) =\frac{9 \sqrt{3} e^{-\frac{x^3}{2}} x K_{\frac{1}{3}}\left(\frac{x^3}{2}\right)}{4 \pi },
 \end{eqnarray*}
where $K_\nu$ is the modified Bessel function of the second kind (see \cite[Ch. 10.25]{DLMF}). 

\item $\mu=0$, then 
\begin{align*}
&	\mathcal{U}_0(x;0) 
		\ =\  	\frac{3   }{\Gamma(\tfrac{1}{3})  } \e^{-x^3}  
	\quad \text{and} \quad 
	\mathcal{U}_1(x;0) 
		\ = \ \frac{9    \Gamma \left(\frac{5}{3}\right) }
			{\Gamma \left(\frac{1}{3}\right) \Gamma\left(\frac{2}{3}\right)}
			   	  \Gamma\left(\frac{2}{3};x^3\right). 
\end{align*}

\item $\mu=1$, then
\begin{eqnarray*}
&&	\mathcal{U}_0(x;1) =  \frac{3 \sqrt{3} e^{-\frac{x^3}{2}} \sqrt{x} K_{\frac{1}{6}}\left(\frac{x^3}{2}\right)}{2
   \pi ^{3/2}}
   \quad \text{and} \quad 
    	\mathcal{U}_1(x;1) = \frac{9 \sqrt{3} e^{-x^3} U\left(\frac{2}{3},\frac{1}{3},x^3\right)}{2 \pi }. 
 \end{eqnarray*}
 
\item  $\mu=2$, then  
\begin{eqnarray*}
&&	\mathcal{U}_0(x;2) =  \frac{\sqrt{3} \Gamma \left(\frac{1}{3}\right) }{2 \pi }\Gamma \left(\tfrac{1}{3},x^3\right)
\quad \text{and} \quad 
 	\mathcal{U}_1(x;2) =\frac{2 \sqrt{3} \Gamma \left(\frac{1}{3}\right) }{\pi }\Gamma\left(\tfrac{2}{3},x^3\right).
 \end{eqnarray*}
\end{itemize}

~

\subsubsection{The differential equation}

Following Lemma \ref{prop: 3rd ODE Pn}, the polynomial $P_{n}(x;\mu)$ is a solution of the differential equation 
$$ 
	-\frac{2}{3}y'''(x)
	+  2 x^2 y''(x)+ 2 x \left(\mu +\frac{3}{4} \left((-1)^n+3\right)-\frac{n}{2}\right) y'(x)
	=2n\left(\mu +\frac{n}{2}+\frac{3 (-1)^n}{4}+\frac{5}{4}\right) y(x) , 
$$ 
whose general solution can be written as 
\begin{eqnarray*}
y(x) 
&=&	c_1 \, 
   \pFq{2}{2}{-\frac{n}{3},\frac{n}{6}+\frac{(-1)^n}{4}+\frac{\mu }{3}+\frac{5}{12}}{\frac{1}{3},\frac{2}{3}}
   	{x^3}
	+ 	c_2  \; x \; 
		\pFq{2}{2}{\frac{1}{3}-\frac{n}{3},\frac{n}{6}+\frac{(-1)^n}{4}+\frac{\mu}{3}+\frac{3}{4}}
		{\frac{2}{3},\frac{4}{3}}{x^3}\\
&&	+ c_3\;  x^2 \; 
	\pFq{2}{2}{\frac{2}{3}-\frac{n}{3},\frac{n}{6}+\frac{(-1)^n}{4}+\frac{\mu}{3}+\frac{13}{12}}{\frac{4}{3},\frac{5}{3}}{x^3} .
\end{eqnarray*}

For each positive integer $n$, there is only one (monic) polynomial solution, but it is not always the same: it depends on whether $n$ equals $0,1$ or $2 \bmod {3}$. 
To be precise, we have 
\begin{eqnarray*}
	 P_{3n}(x;\mu) &\coloneqq & P_n^{[0]}(x^3;\mu) \\
	&=&  \frac{(-1)^n  \left(\frac{1}{3}\right)_n \left(\frac{2}{3}\right)_n}{\left(\frac{n}{2}+\frac{1}{4} (-1)^{3 n}+\frac{\mu
   }{3}+\frac{5}{12}\right){}_n}
		\pFq{2}{2}{-n,\frac{n}{2}+\frac{(-1)^n}{4}+\frac{\mu }{3}+\frac{5}{12}}{\frac{1}{3},\frac{2}{3}}
   	{x^3} , \\[0.2cm]
	P_{3n+1}(x;\mu)&\coloneqq & x P_n^{[1]}(x^3;\mu)\\
		& =&  x \ \frac{(-1)^n   \left(\frac{2}{3}\right)_n \left(\frac{4}{3}\right)_n}{\left(\frac{n}{2}+\frac{1}{4} (-1)^{n+1}+\frac{\mu
   }{3}+\frac{11}{12}\right){}_n}		
   		\pFq{2}{2}{-n,\frac{\mu }{3}+\frac{n}{2}+\frac{1}{4} (-1)^{n+1}+\frac{11}{12}}{\frac{2}{3},\frac{4}{3}}
   	{x^3}, \\[0.2cm]
	P_{3n+2}(x;\mu)&\coloneqq & x^2 P_n^{[2]}(x^3;\mu) \\
		&=& x^2\   \frac{(-1)^n  \left(\frac{4}{3}\right)_n \left(\frac{5}{3}\right)_n }{\left(\frac{n}{2}+\frac{1}{4} (-1)^{n+2}+\frac{\mu
   }{3}+\frac{17}{12}\right){}_n}
		\pFq{2}{2}{-n, \frac{\mu }{3}+\frac{n}{2}+\frac{1}{4} (-1)^{n+2}+\frac{17}{12}}{\frac{4}{3},\frac{5}{3}}
   	{x^3} .
\end{eqnarray*}
 These polynomial sequences $\{P_n^{[k]}(\cdot;\mu)\}_{n\geqslant 0}$, with $k\in\{0,1,2\}$, are precisely the $2$-\OPS s in the cubic decomposition of $\{P_n(\cdot;\mu)\}_{n\geqslant 0}$. In fact, from Lemma \ref{lem: 2sym components rec rel}, these three $2$-\OPS s are not threefold symmetric and satisfy the recurrence relation \eqref{CD rec rel},  whose recurrence coefficients are given in the Appendix for completeness.  These coefficients have been computed in  \cite[Tableau 4, 8 and 12 - Case C]{DM92}, for a different choice of the "free" parameter $\gamma_1$. We have included them here for a matter of completeness. 


\subsubsection{The sequence of derivatives}

Following Theorem \ref{thm: sym rc} along with Theorem \ref{th: gammas},  $\{Q_n(\cdot;\mu)\coloneqq\frac{1}{n+1} P_{n+1}'(x;\mu)\}_{n\geqslant 0}$ is $2$-orthogonal for the vector functional $(v_0,v_1)$, which admits an integral representation via two weight functions $(\widetilde{\mathcal{W}}_0(\cdot;\mu),\widetilde{\mathcal{W}}_1(\cdot;\mu))$ with support on the three-starlike set $S$ which are  given by 
$$
	\left[ \begin{array}{l}
	\widetilde{\mathcal{W}}_0(x;\mu)\\
	\widetilde{\mathcal{W}}_1(x;\mu)
	\end{array}\right]
	 =\left[\begin{array}{cc}
 \frac{\mu+2}{\mu +1} & -\frac{x}{\mu +1} \\
 0 & 1 \\
\end{array}\right]
\left[ \begin{array}{l}
	\mathcal{W}_0(x;\mu)\\
	\mathcal{W}_1(x;\mu)
	\end{array}\right]. 
$$
Proposition \ref{prop: B1 rep} allows us to conclude 
$$
	\langle v_k,f \rangle 
	=  \left(\int_{\Gamma_0} \dd x- \omega^2  \int_{\Gamma_1}\dd x-\omega  \int_{\Gamma_2} \dd x\right)\left( f(x) \ \mathcal{V}_k(x;\mu)\right), \qquad \forall f\in\mathcal{P}, \   k=0,1, 
$$
where 
\begin{eqnarray*}
	\mathcal{V}_0(x;\mu) &=&   \frac{\mu+2}{\mu +1} \mathcal{U}_0(x;\mu)  -\frac{x}{\mu +1}\mathcal{U}_1(x;\mu) \ = \   \mathcal{U}_0(x;\mu+3) ,\\
	\mathcal{V}_1(x;\mu) &=& \mathcal{U}_1(x;\mu). 
\end{eqnarray*}
Furthermore,  $\{Q_n(\cdot;\mu)\}_{n\geqslant 0}$ satisfies the second order recurrence relation \eqref{Qn rec rel sym} where 
\begin{equation}\label{sym gamma tilde B1}
\begin{array}{l}
	\ds \widetilde{\gamma}_{2n }\coloneqq\widetilde{\gamma}_{2n }(\mu) =\frac{2}{3}  \frac{ n (2 n+1)}{(3 n+2+\mu)} , \qquad n\geqslant 1,  \vspace{0.3cm}\\
	\ds \widetilde{\gamma}_{2n+1 }\coloneqq\widetilde{\gamma}_{2n +1}(\mu) =\frac{2}{3} \frac{(n+1) (2 n+1) (n +2+\mu) }{(3 n+2+\mu) (3 n+5+ \mu)}  , \qquad n\geqslant 0.   \\
\end{array}
\end{equation}

For each integer $n\geqslant 0$, the polynomial $Q_n(x;\mu)$ is a solution to the differential equation 
$$
	-\frac{2}{3} y^{(3)}(x)+2 x^2 y''(x) -\frac{1}{2} x \left(-4 \mu +3 \left((-1)^n-5\right)+2 n\right) y'(x)
	= \frac{1}{2} n \left(4 \mu +2   n-3 (-1)^n+11\right) y(x),
$$
from which we deduce 
\begin{eqnarray*}
	Q_{3n}(x;\mu) &=& \frac{(-1)^n (\frac{1}{3})_n (\frac{2}{3})_n}{(\frac{n}{2}-\frac{(-1)^{n}}{4} +\frac{\mu   }{3}+\frac{11}{12})_n} \; 
	\pFq{2}{2}{-n,\frac{n}{2}-\frac{(-1)^{n}}{4} +\frac{\mu
   }{3}+\frac{11}{12}}{\frac{1}{3},\frac{2}{3}}{x^3}, \\
	Q_{3n+1}(x;\mu) &=& x \, \frac{(-1)^n (\frac{2}{3})_n (\frac{4}{3})_n}{(\frac{\mu }{3}+\frac{n}{2}+\frac{(-1)^{n}}{4} +\frac{17}{12})_n} \;  
		\pFq{2}{2}{-n,\frac{\mu }{3}+\frac{n}{2}+\frac{(-1)^{n}}{4} +\frac{17}{12}}{\frac{2}{3},\frac{4}{3}}{x^3} ,\\ 
	Q_{3n+2}(x;\mu) &=& x^2 \,  \frac{(-1)^n (\frac{4}{3})_n (\frac{5}{3})_n}{(\frac{\mu }{3}+\frac{n}{2}-\frac{(-1)^{n}}{4} +\frac{23}{12})_n} \;
	 	\pFq{2}{2}{-n,\frac{\mu }{3}+\frac{n}{2}-\frac{(-1)^{n}}{4} +\frac{23}{12}}{\frac{4}{3},\frac{5}{3}}{x^3}
		,\qquad n\geqslant 0.
\end{eqnarray*}

 It turns out that the forthcoming case $\text{B}_2$ is closely connected to this case $\text{B}_1$: the sequence of derivatives in case $B_1$ belongs to case $B_2$.

\subsection{Case  $B_2$}\label{subs: caseB2} ~

For this family  $\vartheta_{2n+1}=1$ and we can write $\vartheta_{2n}=\frac{n+\rho+1}{n+ \rho}$, 
under the relation $\vartheta_2 = \frac{\rho +2}{\rho +1}$ with $\rho\neq-n$.  As a result, the threefold symmetric Hahn-classical sequence 
$\{P_n(\cdot;\rho;\gamma_1)\}_{n\geqslant 0}$ satisfies the recurrence \eqref{rec rel 3sym} with 
$$
\begin{array}{l}
	\ds \gamma_{2n } =\frac{n(2n+1) (\rho+3)}{(3n+\rho)} \gamma_1, \qquad n\geqslant 1,  \vspace{0.3cm}\\
	\ds \gamma_{2n +1} = \frac{(n+1)(2n+1)(n+\rho) (\rho+3)}{(3n+\rho)(3n+\rho+3)} \gamma_1, \qquad n\geqslant 0.  
\end{array}
$$
Analogously to the former cases, the parameter $\gamma_1$ is redundant for the study, and therefore we can choose a representative value for $\gamma_1$. 
Here we set $\gamma_1 = \frac{2}{3(\rho+3)}$. Hence the $2$-\OPS\ $\{P_n(\cdot;\rho)\coloneqq P_n(\cdot;\rho;\frac{2}{3(\rho+3)})\}_{n\geqslant 0}$ satisfies \eqref{rec rel 3sym}  with 
\begin{equation}\label{sym gamma B2}
\begin{array}{l}
	\ds \gamma_{2n }\coloneqq \gamma_{2n }(\rho)  =\frac{2n(2n+1) }{3(3n+\rho)} , \qquad n\geqslant 1,  \vspace{0.3cm}\\
	\ds \gamma_{2n +1} \coloneqq \gamma_{2n+1 }(\rho)= \frac{2(n+1)(2n+1)(n+\rho) }{3(3n+\rho)(3n+\rho+3)} , \qquad n\geqslant 0.  
\end{array}
\end{equation}
Furthermore, the $2$-\OPS\ $\{Q_n(x;\rho)\coloneqq \frac{1}{n+1}P_{n+1}'(x;\rho)\}_{n\geqslant 0}$ satisfies \eqref{Qn rec rel sym} with 
\begin{equation}\label{sym gamma tilde B2}
\begin{array}{l}
	\ds \widetilde{\gamma}_{2n }\coloneqq\widetilde{\gamma}_{2n }(\rho) =  \frac{2 n (2 n+1)  (n+\rho +1)}{3(3 n+\rho ) (3 n+\rho +3)}, \qquad n\geqslant 1,  \vspace{0.3cm}\\
	\ds \widetilde{\gamma}_{2n +1}\coloneqq \widetilde{\gamma}_{2n }(\rho)=\frac{2 (n+1) (2 n+1) }{3(3 n+\rho +3)}, \qquad n\geqslant 0.   
\end{array}
\end{equation}

Observe that the recurrence coefficients uniquely determine a $2$-\OPS. A comparison between \eqref{B1: gamma} and \eqref{sym gamma tilde B2} 
and between \eqref{sym gamma tilde B1} and \eqref{sym gamma B2} shows that 
$$
	Q_n^{\text{case }\text{B}_2}(x;\mu) = P_n^{\text{case }\text{B}_1}(x;\mu+1) , \qquad \text{for all } n\geqslant 0, 
$$
while 
$$
	Q_n^{\text{case }\text{B}_1}(x;\mu) = P_n^{\text{case }\text{B}_2}(x;\mu+2) , \qquad \text{for all } n\geqslant 0,
$$
 where the notation $P_n^{\text{case }\text{B}_1}$ and $ P_n^{\text{case }\text{B}_2}$ is used for the threefold $2$-\OPS s defined by the recurrence coefficients \eqref{B1: gamma}, in case $\text{B}_1$, and  \eqref{sym gamma B2}, in case $\text{B}_2$, respectively. Likewise, $Q_n^{\text{case }\text{B}_1} $ and $Q_n^{\text{case }\text{B}_1} $ denote the monic derivatives of $P_n^{\text{case }\text{B}_1}$ and $ P_n^{\text{case }\text{B}_2}$, respectively. 

From this, we conclude that the  the 
polynomial sequences $\{Q_n^{\text{case }\text{B}_1}(x;\mu)\coloneqq \frac{1}{n+1}\frac{\dd}{\dd x}P_{n+1}^{\text{case }\text{B}_1}(x;\mu)\}_{n\geqslant 0}$
and $\{Q_n^{\text{case }\text{B}_2}(x;\rho)\coloneqq \frac{1}{n+1}\frac{\dd}{\dd x}P_{n+1}^{\text{case }\text{B}_2}(x;\rho)\}_{n\geqslant 0}$ are also Hahn-classical. Furthermore, we have 
$$
	\frac{1}{(n+2)(n+1)}\frac{\dd^2}{\dd x^2}P_{n+2}(x;\mu)
	= P_{n}(x;\mu+3) ,
$$
in both cases $\text{B}_1$ and $\text{B}_2$.
These observations also serve as an alternative to the proof of the following result:

\begin{proposition}  Let $\rho>0$. The threefold symmetric polynomial sequence $\{P_n(\cdot: \rho)\}_{n\geqslant 0}$ defined by the recurrence relation \eqref{rec rel 3sym} with $\gamma_n$ given by \eqref{sym gamma B2} is 2-orthogonal with respect to $(u_0,u_1)$ admitting the integral representation  \eqref{int rep for uk classical} where 
\begin{eqnarray*}
&&	\mathcal{U}_0(x) \coloneqq\mathcal{U}_0(x; \rho) 
		= \frac{3  \Gamma \left(\frac{\rho +3}{3}\right)}{\Gamma \left(\frac{1}{3}\right) \Gamma \left(\frac{2}{3}\right)}
		 \e^{-x^3}\mathbf{U}\left(\frac{\rho +1}{3},\frac{2}{3},x^3\right)	,
\\
&& 	\mathcal{U}_1(x)\coloneqq\mathcal{U}_1(x; \rho) 
	=\frac{9  \Gamma \left(\frac{\rho +3}{3}\right) }{\Gamma \left(\frac{1}{3}\right) \Gamma \left(\frac{2}{3}\right)}
	x^2 \e^{-x^3} \mathbf{U}\left(\frac{\rho +1}{3},\frac{5}{3},x^3\right),
\end{eqnarray*}
and $b=+\infty$, 
where $\mathbf{U}$ represents the Kummer (confluent hypergeometric) function of second kind.
\end{proposition}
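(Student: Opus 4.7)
The plan is to run the same scheme as in Proposition \ref{prop: B1 rep}, using the parameter values of Case $B_2$, and to exploit the fact that Case $B_2$ is obtained from Case $B_1$ by a single differentiation. Direct inspection of \eqref{B1: gamma}--\eqref{sym gamma tilde B1} against \eqref{sym gamma B2}--\eqref{sym gamma tilde B2} shows that $\widetilde{\gamma}_n(\mu)$ in Case $B_1$ equals $\gamma_n(\rho)$ in Case $B_2$, and $\gamma_n(\mu)$ in Case $B_1$ equals $\widetilde{\gamma}_n(\rho)$ in Case $B_2$, whenever $\rho = \mu + 2$. Hence $P_n(x;\rho) = Q_n^{B_1}(x;\rho-2)$, and the weight pair $(\mathcal{V}_0(\cdot;\mu), \mathcal{V}_1(\cdot;\mu))$ of the derivative sequence of Case $B_1$, evaluated at $\mu = \rho-2$, provides an integral representation for $(u_0,u_1)$. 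Since we already showed in Section \ref{subs: caseB1} that $\mathcal{V}_0(x;\mu) = \mathcal{U}_0^{B_1}(x;\mu+3)$ and $\mathcal{V}_1(x;\mu) = \mathcal{U}_1^{B_1}(x;\mu)$, this substitution reproduces verbatim the two integral representations in the statement.

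For completeness I would also run Theorem \ref{thm:int rep classical} directly. Plugging in $\vartheta_1 = 1$, $\vartheta_2 = (\rho+2)/(\rho+1)$ and $\gamma_1 = 2/[3(\rho+3)]$, the polynomial $\phi$ in \eqref{phi} collapses to the constant $(\rho+3)/(\rho+1)$, so \eqref{weight eq U0} simplifies to
$$\frac{1}{3}\mathcal{U}_0''(x) + x^2\mathcal{U}_0'(x) - (\rho-1)\,x\,\mathcal{U}_0(x) = 0.$$
This is the same ODE encountered in the proof of Proposition \ref{prop: B1 rep} after the formal shift $\mu \mapsto \rho+1$, and the unique solution on $[0,\infty)$ compatible with \eqref{Sym cond for weights U0} is $C\,e^{-x^3}\mathbf{U}(\tfrac{\rho+1}{3},\tfrac{2}{3}; x^3)$. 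The normalisation constant is pinned down by imposing the zeroth-moment condition via the substitution $t = x^3$ combined with the Mellin identity \eqref{mellin Kummer}, yielding $C = 3\,\Gamma(\tfrac{\rho+3}{3})/[\Gamma(\tfrac{1}{3})\Gamma(\tfrac{2}{3})]$. Since $\vartheta_1 - 1 = 0$, relation \eqref{weight eq U1 gen} collapses to $\mathcal{U}_1(x) = -\mathcal{U}_0'(x)$, and the differentiation formula \cite[(13.3.27)]{DLMF} produces
$$-\mathcal{U}_0'(x) = 3Cx^2 e^{-x^3}\!\left[\mathbf{U}(\tfrac{\rho+1}{3},\tfrac{2}{3}; x^3) + \tfrac{\rho+1}{3}\,\mathbf{U}(\tfrac{\rho+4}{3},\tfrac{5}{3}; x^3)\right],$$
which reduces to the target $3Cx^2 e^{-x^3}\mathbf{U}(\tfrac{\rho+1}{3},\tfrac{5}{3}; x^3)$ after a one-line manipulation of the integral representation \cite[(13.4.4)]{DLMF}.

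The boundary conditions \eqref{Sym cond for weights U0} on each of the three rays follow from the large-$z$ asymptotic \cite[(13.7.3)]{DLMF} combined with the exponential decay $e^{-x^3}$, while the moment identities against \eqref{B mom u0}--\eqref{B mom u1} are another direct application of \eqref{mellin Kummer} with $n \mapsto 3n$. The only real technical obstacle in the direct route is identifying the correct contiguous identity that folds $-\mathcal{U}_0'$ into a single $\mathbf{U}(\cdot,\tfrac{5}{3};\cdot)$ term; this difficulty evaporates entirely when one takes the derivative shortcut as the primary argument, so the cleanest presentation is to state the shortcut first and then record the direct computation only as an independent consistency check.
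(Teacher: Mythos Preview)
Your proposal is correct and, in its direct route, is exactly what the paper means by ``entirely analogous to the proof of Proposition~\ref{prop: B1 rep}''. The $B_1\!\leftrightarrow\!B_2$ duality shortcut you lead with is also valid and is precisely the observation the paper records immediately \emph{after} the proof; just note that this shortcut literally needs $\mu=\rho-2>-1$, i.e.\ $\rho>1$, so the direct computation is still required to cover $0<\rho\leqslant1$. One small cleanup: the identity you cite, \cite[(13.3.27)]{DLMF}, already gives $\tfrac{\dd}{\dd z}\!\left(\e^{-z}\mathbf{U}(a,b;z)\right)=-\e^{-z}\mathbf{U}(a,b+1;z)$, so $-\mathcal{U}_0'(x)=3Cx^2\e^{-x^3}\mathbf{U}(\tfrac{\rho+1}{3},\tfrac{5}{3};x^3)$ in a single step; the two-term expression you wrote and the subsequent contiguous manipulation are unnecessary.
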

\begin{proof} The proof is entirely analogous to the proof of Proposition \ref{prop: B1 rep}.
\end{proof}

In fact, we have 
$$
	\mathcal{U}_0^{\text{case } \text{B}_2}(x; \rho) 
	= \mathcal{U}_0^{\text{case } \text{B}_1}(x; \rho+1) 
	\quad \text{and}\quad 
	\mathcal{U}_1^{\text{case } \text{B}_2}(x; \rho) 
	= \mathcal{U}_1^{\text{case } \text{B}_1}(x; \rho-2),
$$
provided that $\rho>1$.

\subsection{Case C}\label{subs: caseC} ~ 

 Here we have 
$$ 
	\vartheta_{2n-1} = \frac{n+\mu+1}{n+\mu} \quad \text{and}\quad \vartheta_{2n} =  \frac{n+\rho+1}{n+\rho} \ , \qquad n\geqslant 1. 
$$ 
so that  \eqref{sym gammas}  reads as  
$$ 
\begin{array}{l}
	\ds \gamma_{2n } =  \frac{2n(2n+1)(n+\mu) }
						{(3n+\mu-1)(3n+\mu+2)(3n+\rho)} \frac{ (\mu+2)(\rho+3)\gamma_1}{2}, \qquad n\geqslant 1,  \vspace{0.3cm}\\
	\ds \gamma_{2n +1} = \frac{2(n+1)(2n+1)(n+\rho)  }
						{(3n+\mu+2)(3n+\rho)(3n+\rho+3)} \frac{ (\mu+2)(\rho+3)\gamma_1}{2}, \qquad n\geqslant 0.   \\
\end{array}
$$ 

The Hahn-classical polynomial sequence obtained under these assumptions clearly depends  on the pair of parameters $(\mu,\rho)$  and on $\gamma_1$, so it makes sense to incorporate this information, and therefore we shall refer to it as $\{P_n(\cdot;\mu,\rho;\gamma_1)\}_{n\geqslant 0}$. 
Similar to the precedent cases, without loss of generality, it suffices to study the sequence for a particular choice of $\gamma_1$. 
A scaling of the variable would then reproduce all the other sequences $\{P_n(\cdot;\mu,\rho;\gamma_1)\}_{n\geqslant 0}$ within this equivalence class. Hence, 
we set $\gamma_1 = \dfrac{2}{ (\mu+2)(\rho+3) }$, and the analysis is on  
$$ 
	\left\{P_n(\cdot;\mu,\rho)\coloneqq P_n\left(\cdot;\mu,\rho; \tfrac{2}{ (\mu+2)(\rho+3)}\right)\right\}_{n\geqslant 0},
$$ 
which satisfies the recurrence relation \eqref{rec rel 3sym} where the $\gamma$-coefficients are given by
\begin{equation}\label{sym gamma C rep}
\begin{array}{l}
	\ds \gamma_{2n } \coloneqq\gamma_{2n } (\mu,\rho) =  \frac{2n(2n+1)(n+\mu) }
						{(3n+\mu-1)(3n+\mu+2)(3n+\rho)} , \qquad n\geqslant 1, \vspace{0.3cm}\\
	\ds \gamma_{2n +1} \coloneqq\gamma_{2n } (\mu,\rho)= \frac{2(n+1)(2n+1)(n+\rho)  }
						{(3n+\mu+2)(3n+\rho)(3n+\rho+3)}  , \qquad n\geqslant 0.   \\
\end{array}
\end{equation}
Observe that $\gamma_{n+1}>0$ for all $n\geqslant 0$ provided that $\mu+1, \rho>0$ and that
$$ 
	\gamma_n = \frac{4}{27} +o(1) , \quad \text{as}\quad n\to +\infty. 
$$ 
Theorem \ref{thm: asym zero} ensures that the largest zero of $P_n(x;\mu,\rho)$ in absolute value is always less than or equal to $1$. Figure \ref{Fig Largest zero C} illustrates the curve defined by these zeros. 
\begin{figure}[th] 
\centerline{\psfig{file=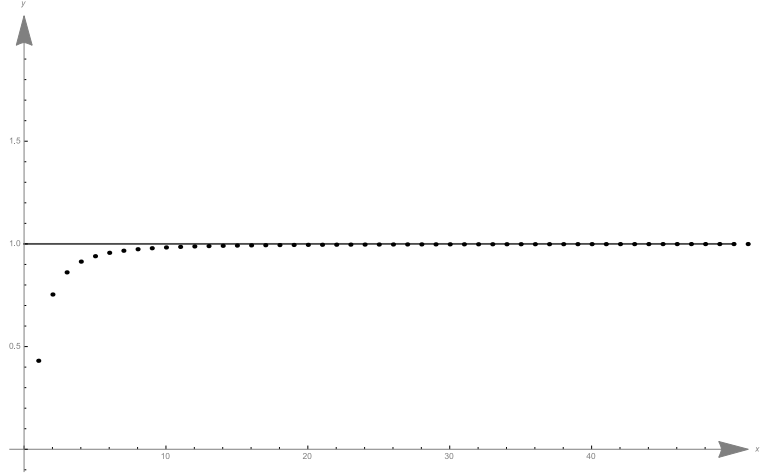,width=10cm}} 
	\caption{Plot of the largest zero in absolute value of $P_{3n}(x;\mu,\rho)$ for each $n=1,\ldots,100$ when $\mu=2,\rho=3$, against the curve $y=1$. }
	\label{Fig Largest zero C}
\end{figure}

Regarding the orthogonality measures for the polynomial sequence $\{P_n(\cdot;\mu,\rho)\}_{n\geqslant 0}$  we have: 

\begin{proposition}\label{prop: C rep} Let $\mu>-1$ and $\rho>0$. The threefold symmetric polynomial sequence $\{P_n(\cdot;\mu, \rho)\}_{n\geqslant 0}$ defined by the recurrence relation \eqref{rec rel 3sym} with $\gamma_n$ given by \eqref{sym gamma C rep}  is $2$-orthogonal with respect to $(u_0,u_1)$, admitting the integral representation  \eqref{int rep for uk classical} with $b=1$ and  
\begin{eqnarray}
&&	\mathcal{U}_0(x) \coloneqq\mathcal{U}_0(x;\mu, \rho) 
		=  \frac{3\Gamma \left(\frac{\mu+2}{3}\right)\Gamma \left(\frac{\rho }{3}+1\right)}
		{\Gamma \left(\frac{1}{3}\right)\Gamma \left(\frac{2}{3}\right)\Gamma\left(\frac{\mu +\rho +2}{3} \right)  }
	(1-x^3)^{\frac{\mu+\rho-1}{3}}  \ \pfq {\frac{\mu}{3}, \frac{\rho+1}{3}}{\frac{\mu+\rho+2}{3} }{1-x^3}, 
	 \label{U0 case C}\\
&& 	\mathcal{U}_1(x)\coloneqq\mathcal{U}_1(x; \mu,\rho) 
	= \frac{3 \Gamma \left(\frac{\mu+5}{3}\right)\Gamma \left(\frac{\rho }{3}+1\right)}
		{\Gamma \left(\frac{2}{3}\right)\Gamma \left(\frac{4}{3}\right)\Gamma\left(\frac{\mu +\rho +2}{3} \right)  }x^2 (1-x^3)^{\frac{\mu+\rho-1}{3}}
		  \pfq {\frac{\mu}{3}+1, \frac{\rho+1}{3}}{\frac{\mu+\rho+2}{3} }{1-x^3}. \quad
	 \label{U1 case C}
\end{eqnarray} 
\end{proposition}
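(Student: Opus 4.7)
The plan is to follow the same strategy as the proof of Proposition~\ref{prop: B1 rep}, adapting it to the two-parameter setting. Under the assumptions $\mu>-1$ and $\rho>0$, the $\gamma$-coefficients in \eqref{sym gamma C rep} are all positive, so $\{P_n(\cdot;\mu,\rho)\}$ is $2$-orthogonal by Theorem~\ref{thm:3sym 2OPS}; moreover $\vartheta_1=\frac{\mu+2}{\mu+1}>1$ and $\vartheta_2=\frac{\rho+2}{\rho+1}>1$. Since $\gamma_n=\frac{4}{27}+o(1)$, Theorem~\ref{thm: asym zero} confines all zeros to the closed disc of radius $1$, consistent with $b=1$ in \eqref{int rep for uk classical}. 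Theorem~\ref{thm: sym rc} and Theorem~\ref{thm:int rep classical} then reduce the task to exhibiting a twice-differentiable $\mathcal{U}_0\colon[0,1)\to\mathbb{R}$ that solves \eqref{weight eq U0} with $\lambda_0=0$ and satisfies \eqref{Sym cond for weights U0}, and subsequently reading off $\mathcal{U}_1$ from \eqref{weight eq U1 gen}.

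A direct computation with $\vartheta_1=\frac{\mu+2}{\mu+1}$, $\vartheta_2=\frac{\rho+2}{\rho+1}$ and $\gamma_1=\frac{2}{(\mu+2)(\rho+3)}$ yields
\begin{equation*}
    \phi(x)=\frac{(\mu+2)(\rho+3)}{(\mu+1)(\rho+1)}\bigl(1-x^3\bigr),
\end{equation*}
singling out $1-x^3$ as the natural new variable. I substitute $t=1-x^3$ and try the ansatz $\mathcal{U}_0(x)=C\,t^{\alpha}H(t)$. The exponent $\alpha=\frac{\mu+\rho-1}{3}$ is uniquely determined by the requirement that the $H(t)/t$ term in the transformed equation cancel, and with this choice the equation for $H$ becomes the Gauss hypergeometric equation
\begin{equation*}
    t(1-t)H''(t)+\bigl(c-(a+b+1)t\bigr)H'(t)-ab\,H(t)=0,
\end{equation*}
with $a=\frac{\mu}{3}$, $b=\frac{\rho+1}{3}$, $c=\frac{\mu+\rho+2}{3}$. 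The solution $H(t)={}_2F_1(a,b;c;t)$ regular at $t=0$ produces $\mathcal{U}_0$ up to the multiplicative constant $C$; the second independent branch at $t=0$ is ruled out because only the first matches the moment recurrence derived from \eqref{Sym moment eq u0}.

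The conditions \eqref{Sym cond for weights U0} at $x=1$ are essentially automatic because $\phi(1)=0$ and the factor $(1-x^3)^{\alpha}$ (with $\alpha>-\tfrac13$) makes $\mathcal{U}_0$ integrable against polynomials; at $x=0$ the function is analytic. To pin down $C$ and verify \eqref{int rep for uk classical} for $u_0$, I compute the moments via the successive substitutions $t=x^3$ and $s=1-t$:
\begin{equation*}
    \int_{0}^{1}x^{3n}\mathcal{U}_0(x)\,\dd x
    = \frac{C}{3}\int_{0}^{1}(1-s)^{n-2/3}\,s^{\alpha}\,{}_2F_1(a,b;c;s)\,\dd s.
\end{equation*}
The Euler integral representation of ${}_3F_2$ (DLMF~16.5.2), applied with $\delta=\alpha+1=c$, reduces the resulting ${}_3F_2(a,b,c;c,e;1)$ to a ${}_2F_1(a,b;e;1)$ with $e=\frac{3n+\mu+\rho+3}{3}$, which is summed by Gauss's formula. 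Simplifying in terms of Pochhammer symbols gives $\int_{0}^{1}x^{3n}\mathcal{U}_0(x)\,\dd x=\frac{(1/3)_n(2/3)_n}{((\mu+2)/3)_n((\rho+3)/3)_n}$ once $C$ is chosen as in \eqref{U0 case C} (the normalisation $(u_0)_0=1$). Because $\omega^{3n}=1$, the two non-real rays contribute equally to $\langle u_0,x^{3n}\rangle$, completing the proof for $u_0$.

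The weight $\mathcal{U}_1$ is then obtained directly from \eqref{weight eq U1 gen}: substituting the hypergeometric expression for $\mathcal{U}_0$ and tidying up via the standard differentiation and contiguous relations for ${}_2F_1$ produces the closed form \eqref{U1 case C}, whose moments $\int_0^1 x^{3n+1}\mathcal{U}_1(x)\,\dd x$ are then matched against \eqref{Sym moment eq u1} by the same Euler--Gauss mechanism. The main obstacle is the reduction of \eqref{weight eq U0} to Gauss's hypergeometric equation with the correct identification of $\alpha$, and the moment evaluation through the Euler-type integral combined with Gauss's summation; the remaining analytic checks (boundary behaviour, positivity of the $\gamma_n$, bound on the largest zero) are routine.
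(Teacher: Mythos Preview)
Your proposal is correct and follows essentially the same route as the paper's proof. The only substantive difference is that the paper first writes the general solution of the ODE as a combination of two ${}_2F_1$'s in the variable $x^3$ and then invokes the connection formula DLMF~(15.10.18) to pass to the $(1-x^3)$-representation, whereas you reach it directly via the substitution $t=1-x^3$ and the ansatz $t^\alpha H(t)$; your moment computation (Euler integral for ${}_3F_2$ followed by Gauss summation) is exactly the content of the Gradshteyn--Ryzhik identity (7.512.4) that the paper cites.

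One small gap: when $\mu=0$ one has $\vartheta_1=2$, so the prefactor $(\vartheta_1-2)(2\vartheta_2-1)$ in \eqref{weight eq U1 gen} vanishes and that relation cannot be used to read off $\mathcal{U}_1$. In this case you must use \eqref{weight eq U1 2} instead; the paper treats $\mu=0$ separately, integrates the resulting first-order equation, and then checks that the answer agrees with \eqref{U1 case C} evaluated at $\mu=0$.
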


\begin{proof} According to Theorem \ref{thm: sym rc}, $\{P_n(\cdot;\mu, \rho)\}_{n\geqslant 0}$ is $2$-orthogonal with respect to the vector of linear functionals $(u_0,u_1)$ fulfilling the distributional equations 
\begin{align*}
	&\left(  1- x^3\right)u_0 '' +  x^2 (\mu +\rho -4) u_0 '-  (\mu -2) (\rho -1) x u_0  =0, 
	\\[0.5cm]
	&\begin{cases}
	  \frac{\mu   } {(\mu +2)  } u_1 
	   =  \left( x^3-1\right)u_0 ' 
	-  (\rho -1) x^2 \ u_0 & \text{if} \quad \mu\neq 0, \\
	x u_1' = 2 u_0'	
		& \text{if} \quad \mu= 0, 
	\end{cases}
\end{align*}
which yields 
$$\begin{array}{l}
	(u_0)_{3n} = \ds \frac{   \left(\frac{1}{3}\right)_n \left(\frac{2}{3}\right)_n  }{\left(\frac{\mu +2}{3}\right)_n \left(\frac{\rho +3}{3}\right)_n}
	,\quad 
	(u_0)_{3n+1} =  (u_0)_{3n+2} =0, \qquad n\geqslant0,
\end{array}$$ as well as  
$$\begin{array}{l}
	(u_1)_{3n+1} = \ds \frac{   \left(\frac{2}{3}\right)_n \left(\frac{4}{3}\right)_n  }{\left(\frac{\mu +5}{3}\right)_n \left(\frac{\rho +3}{3}\right)_n} ,
	\quad 
   	(u_1)_{3n} =  (u_1)_{3n+2} =0, \qquad n\geqslant0,
\end{array}
$$
because $(u_0)_0=(u_1)_1=1$ and $(u_0)_1=(u_0)_2=(u_1)_0=(u_1)_2=0$.

Since $\lim_{n\to \infty} \gamma_n = \frac{4}{27}$, then by virtue of Theorem \ref{thm:int rep classical}, $u_0$ and $u_1$  admit the integral representation 
\eqref{int rep for uk classical} with $b=1$, provided that there exists a pair of functions $\mathcal{U}_0$ and $\mathcal{U}_1$ with support on $[0,1)$ satisfying \eqref{weight eq U0}-\eqref{weight U1} and subject to the boundary condition \eqref{Sym cond for weights U0}. To be precise, under the assumptions, we seek $\mathcal{U}_0(x;\mu,\rho)$ and $\mathcal{U}_1(x;\mu,\rho)$ defined on defined on $[0,1)$ such that 
\begin{align}\label{C: eq diff U0 lambda}
	&
	\left(  1- x^3\right)\mathcal{U}_0''(x;\mu,\rho)+  x^2 (\mu +\rho -4) \mathcal{U}_0'(x;\mu,\rho)-  (\mu -2) (\rho -1) x \mathcal{U}_0(x;\mu,\rho)  = \lambda_0 g_0(x),
		 \\[0.3cm]
	& \begin{cases}
	 \ds \frac{\mu   } {(\mu +2)  }\mathcal{U}_1(x;\mu,\rho)
	   =  \left( x^3-1\right)\mathcal{U}_0'(x;\mu,\rho) 
	-  (\rho -1) x^2 \mathcal{U}_0(x;\mu,\rho) + \lambda_1 g_1(x)  & \text{if} \quad \mu\neq 0, \\[0.4cm]
	x \mathcal{U}_1^{\prime}(x;\mu,\rho)= 2\mathcal{U}_0^{\prime}(x;\mu,\rho)+ \lambda_1 g_1(x)& \text{if} \quad \mu= 0, 
	\end{cases}
	\label{C: eq diff U1 lambda}
\end{align}
where $\lambda_j$, for $j\in\{0,1\}$, are constants (possibly zero) and $g_j$ are functions representing the null linear functional, { i.e.}, 
$$
	 \int_{\mathcal{C}} g_j(x) x^n\, \dd x = 0, \qquad n\geqslant 0, 
$$
and, in addition, 
\begin{eqnarray} \label{C: cond for weights U0}
	&&	\lim_{x\to 1} f(x) \frac{\dd^l }{\dd x^l}\mathcal{U}_0(x;\mu,\rho) = 0  
		, \quad\text{for any} \  l\in\{0,1\} \ \text{ and } \  f\in\mathcal{P},\\
	&&	\int_{0}^{1} x^{3n} \mathcal{U}_0(x;\mu,\rho)\, \dd x
		=  \frac{   \left(\frac{1}{3}\right)_n \left(\frac{2}{3}\right)_n  }{\left(\frac{\mu +2}{3}\right)_n \left(\frac{\rho +3}{3}\right)_n}
		,\qquad n\geqslant 0. 
	\label{C: moments U0}
\end{eqnarray}

For $\lambda_0=0$, the differential equation \eqref{C: eq diff U0 lambda} becomes 
$$ 
	 \left(  1- x^3\right) \mathcal{U}_0''(x;\mu,\rho)+  x^2 (\mu +\rho -4) \mathcal{U}_0'(x;\mu,\rho)-  (\mu -2) (\rho -1) x \mathcal{U}_0(x;\mu,\rho)  =0 ,
$$ 
whose general solution can be written as 
$$
	\mathcal{U}_0(x;\mu,\rho) = c_1  \ \ \pfq{\frac{2-\mu}{3},\frac{1-\rho}{3}}{\frac{2}{3} }{x^3} 
	+ c_2  \ x \ \pfq {1-\frac{\mu}{3}, \frac{2-\rho}{3}}{\frac{4}{3} }{x^3} ,
$$
with integration constants $c_1$ and $c_2$ which must be chosen such that \eqref{C: cond for weights U0}-\eqref{C: moments U0} hold.  
By taking 
$$ 
	c_2=  -\frac{  \Gamma \left(\frac{2}{3}\right) \Gamma \left(\frac{\mu +1}{3}\right)
   \Gamma \left(\frac{\rho +2}{3}\right)}{\Gamma \left(\frac{4}{3}\right) \Gamma
   \left(\frac{\mu }{3}\right) \Gamma \left(\frac{\rho +1}{3}\right)} c_1, 
$$ 
we have 
\begin{eqnarray*}
	\mathcal{U}_0(x;\mu,\rho) &=& c_1\frac{ \Gamma \left(\frac{\mu+1}{3}\right) \Gamma \left(\frac{\rho+2}{3}\right)}{ \Gamma \left(\frac{1}{3}\right)} \\
	&&\left(   \frac{ \Gamma \left(\frac{4}{3}\right)}{ \Gamma \left(\frac{\mu+1}{3}\right) \Gamma \left(\frac{\rho+2}{3}\right)} 
			\ \pfq{\frac{2-\mu}{3},\frac{1-\rho}{3}}{\frac{2}{3} }{x^3} 
	- \frac{ \Gamma \left(\frac{2}{3}\right) \ x }{ \Gamma \left(\frac{\mu}{3}\right) \Gamma \left(\frac{\rho+1}{3}\right)}  
			 \ \pfq {1-\frac{\mu}{3}, \frac{2-\rho}{3}}{\frac{4}{3} }{x^3} \right)\\
	&=& c_1\frac{ \Gamma \left(\frac{\mu+1}{3}\right) \Gamma \left(\frac{\rho+2}{3}\right)}{ \Gamma \left(\frac{1}{3}\right)}
	(1-x^3)^{\frac{\mu+\rho-1}{3}}  \ \pfq {\frac{\mu}{3}, \frac{\rho+1}{3}}{\frac{\mu+\rho+2}{3} }{1-x^3} ,
\end{eqnarray*}
where, for the latter identity, we have used \cite[(15.10.18)]{DLMF}. In fact, for this choice of $c_2$, it follows that  $\lim\limits_{x\to 1^{-}}\mathcal{U}_0(x;\mu,\rho) =0 $, because 
$$ 
	\pFq{2}{1}{a,b}{c}{1}= \frac{\Gamma(c) \Gamma(c-a-b)}{\Gamma(c-a)\Gamma(c-b)} 
$$ 
is valid whenever $\Re(c-a-b)>0$. Moreover, condition \eqref{C: cond for weights U0} is fulfilled, and, in addition, we successively have 
\begin{eqnarray*}
	\int_0^1 x^{3n} \mathcal{U}_0(x;\mu,\rho)\,\dd x 
	&=& c_1\frac{ \Gamma \left(\frac{\mu+1}{3}\right) \Gamma \left(\frac{\rho+2}{3}\right)}{ \Gamma \left(\frac{1}{3}\right)}
		\int_0^1 x^{3n} (1-x^3)^{\frac{\mu+\rho-1}{3}}  \ \pfq {\frac{\mu}{3}, \frac{\rho+1}{3}}{\frac{\mu+\rho+2}{3} }{1-x^3} \, \dd x\\
	&=& c_1\frac{ \Gamma \left(\frac{\mu+1}{3}\right) \Gamma \left(\frac{\rho+2}{3}\right)}{3 \Gamma \left(\frac{1}{3}\right)}
		\int_0^1 (1-x)^{n-\frac{2}{3}} x^{\frac{\mu+\rho-1}{3}}  \ \pfq {\frac{\mu}{3}, \frac{\rho+1}{3}}{\frac{\mu+\rho+2}{3} }{x} , \dd x\\
	&=& c_1\frac{ \Gamma \left(\frac{2}{3}\right)\Gamma \left(\frac{\mu+1}{3}\right) \Gamma \left(\frac{\rho+2}{3}\right)\Gamma\left(\frac{\mu +\rho +2}{3} \right)}
				{3  \Gamma \left(\frac{\mu+2}{3}\right)\Gamma \left(\frac{\rho }{3}+1\right)}
		\frac{\left(\frac{1}{3}\right)_n \left(\frac{2}{3}\right)_n }
		{\left(\frac{\mu+2}{3}\right)_n \left(\frac{\rho }{3}+1\right)_n},
\end{eqnarray*}
where, for the last identity, we have used \cite[(7.512.4)]{GR}. If we take
$$
	c_1 = \frac{3  \Gamma \left(\frac{\mu+2}{3}\right)\Gamma \left(\frac{\rho }{3}+1\right)}
	{ \Gamma \left(\frac{2}{3}\right)\Gamma \left(\frac{\mu+1}{3}\right) \Gamma \left(\frac{\rho+2}{3}\right)\Gamma\left(\frac{\mu +\rho +2}{3} \right)}
$$
then \eqref{C: moments U0} is fulfilled. As a result, the first orthogonality measure can be represented as in \eqref{int rep for uk classical} (case $k=0$) where $\mathcal{U}_0$ is given by \eqref{U0 case C}. 

Now, in order to obtain the second orthogonality measure, we take $\lambda_1=0$ in \eqref{C: eq diff U1 lambda}, which involves a derivative of 
$\mathcal{U}_0(x;\mu,\rho) $, that, according to \cite[(15.5.4)]{DLMF}, corresponds to 
$$ \begin{multlined}
	\mathcal{U}_0'(x;\mu,\rho) = -  \frac{3\Gamma \left(\frac{\mu+2}{3}\right)\Gamma \left(\frac{\rho }{3}+1\right)}
		{\Gamma \left(\frac{1}{3}\right)\Gamma \left(\frac{2}{3}\right)\Gamma\left(\frac{\mu +\rho +2}{3} \right)  }
		\left(\mu+\rho-1\right)
	x^2 (1-x^3)^{\frac{\mu+\rho+2}{3}-2}  \\
	\times \pfq {\frac{\mu}{3}, \frac{\rho+1}{3}}{\frac{\mu+\rho+2}{3} -1}{1-x^3} .
\end{multlined}
$$ 
For $\mu\neq0$, it follows from \eqref{C: eq diff U1 lambda} that 
\begin{align*}
	 \frac{\mu   } {(\mu +2)  }\mathcal{U}_1(x;\mu,\rho)
	=&  \frac{9\Gamma \left(\frac{\mu+2}{3}\right)\Gamma \left(\frac{\rho }{3}+1\right)}
		{\Gamma \left(\frac{1}{3}\right)\Gamma \left(\frac{2}{3}\right)\Gamma\left(\frac{\mu +\rho +2}{3} \right)  }x^2 (1-x^3)^{\frac{\mu+\rho-1}{3}} \\
	&	\qquad \Bigg( \left(\tfrac{\mu+\rho-1}{3}\right)
	 \ \pfq {\frac{\mu}{3}, \frac{\rho+1}{3}}{\frac{\mu+\rho-1}{3} }{1-x^3}
	 -  \left(\tfrac{\rho-1}{3}\right) \pfq {\frac{\mu}{3}, \frac{\rho+1}{3}}{\frac{\mu+\rho+2}{3} }{1-x^3}\Bigg)\\[0.3cm]
	= & \frac{3\mu\Gamma \left(\frac{\mu+2}{3}\right)\Gamma \left(\frac{\rho }{3}+1\right)}
		{\Gamma \left(\frac{1}{3}\right)\Gamma \left(\frac{2}{3}\right)\Gamma\left(\frac{\mu +\rho +2}{3} \right)  }x^2 (1-x^3)^{\frac{\mu+\rho-1}{3}}
		  \pfq {\frac{\mu}{3}+1, \frac{\rho+1}{3}}{\frac{\mu+\rho+2}{3} }{1-x^3} ,
\end{align*}
where, for the last identity we have used \cite[(15.5.15)]{DLMF}
$$ 
	(c-1)  \, \pFq{2}{1}{a,b}{c-1}{1-z}-(-a+c-1)  \, \pFq{2}{1}{a,b}{c}{1-z}
	= a   \; \pFq{2}{1}{a+1,b}{c}{1-z}.
$$ 
As a result, we obtain the expression \eqref{U1 case C} for $\mathcal{U}_1(x;\mu,\rho)$ and we have 
$$ 
	\int_{0}^1 x^{3n+1}\mathcal{U}_1(x;\mu,\rho)\, \dd x 
	= \frac{   \left(\frac{2}{3}\right)_n \left(\frac{4}{3}\right)_n  }{\left(\frac{\mu +5}{3}\right)_n \left(\frac{\rho +3}{3}\right)_n}
	= (u_1)_{3n+1}, \qquad n\geqslant 0. 
$$ 
When $\mu=0$, then \eqref{C: eq diff U1 lambda} reads as 
$$ 
	 \mathcal{U}_1^{\prime}(x;0,\rho)
	= -\frac{18  \Gamma \left(\frac{\rho }{3}+1\right) }{\Gamma \left(\frac{1}{3}\right) \Gamma
   \left(\frac{\rho -1}{3}\right)} x \left(1-x^3\right)^{\frac{\rho -4}{3}} ,
$$ 
so that 
$$ 
	 \mathcal{U}_1 (x;0,\rho)
	 = K -\frac{9 x^2 \Gamma \left(\frac{\rho }{3}+1\right) }
	 	{\Gamma   \left(\frac{1}{3}\right) \Gamma \left(\frac{\rho -1}{3}\right)} 
   \pFq{2}{1}{\frac{2}{3},\frac{4-\rho }{3}}{\frac{5}{3}}{x^3},  
$$ 
for some integration constant $K$. 
We set $K= \frac{6\Gamma \left(\frac{\rho }{3}+1\right)\Gamma \left(\frac{2 }{3}\right)}{\Gamma \left(\frac{1 }{3}\right)\Gamma \left(\frac{\rho+1 }{3}\right)}$, use 
identity \cite[(15.10.18)]{DLMF} to obtain 
$$ 
	 \mathcal{U}_1 (x;0,\rho)
	 = \frac{6\Gamma \left(\frac{\rho }{3}+1\right)}
		{\Gamma \left(\frac{1}{3}\right)\Gamma\left(\frac{\rho +2}{3} \right)  }x^2 (1-x^3)^{\frac{\rho-1}{3}}
		  \pfq { 1, \frac{\rho+1}{3}}{\frac{\rho+2}{3} }{1-x^3}, 
$$ 
which is the same as \eqref{U1 case C} with $\mu=0$. 
\end{proof}

\subsubsection{Differential equation and the cubic decomposition of $\{P_n(x;\mu,\rho)\}_{n\geqslant 0}$}

Let us assume $\mu>-1$ and $\rho>0$. For each positive integer $n$, the polynomial $P_n(x;\mu,\rho)$ satisfies the differential equation
$$ 
\begin{array}{l}
	\left(x^3-1\right) y^{(3)}(x)+x^2 (\mu +\rho +5) y''(x) \vspace{0.2cm}\\
	+ \frac{1}{8} x \left(8 \mu  \rho +14 \mu -6 n^2-4 n (\mu +\rho +2)-3 (-1)^n (2 \mu -2 \rho +1)+18
   \rho +27\right) y'(x) \vspace{0.2cm}\\
    = \frac{1}{16} n \left(4 \mu +2 n+3 (-1)^n+5\right) \left(2 n-3 (-1)^n+4   \rho +3\right) y(x). 
\end{array}
$$ 
The general solution of the latter equation can be written as 
$$ 
\begin{array}{ll}
	P_n(x;\mu,\rho)
	&= c_1 \,
		\pFq{3}{2}{-\frac{n}{3},\frac{n}{6}+\frac{(-1)^n}{4}+\frac{\mu }{3}+\frac{5}{12},\frac{n}{6}-\frac{(-1)^{n}}{4} +\frac{\rho}{3}+\frac{1}{4}}
		{\frac{1}{3},\frac{2}{3}}
		{x^3}
  \\
   &+\
   c_2 x \ 
   	\pFq{3}{2}{\frac{1}{3}-\frac{n}{3},\frac{n}{6}+\frac{(-1)^n}{4}+\frac{\mu }{3}+\frac{3}{4},\frac{n}{6}-\frac{(-1)^{n}}{4}+\frac{\rho  }{3}+\frac{7}{12}}
	{\frac{2}{3},\frac{4}{3}}
	{x^3}
   \\
   &+\ c_3 x^2 \ 
   \pFq{3}{2}{\frac{2}{3}-\frac{n}{3},\frac{n}{6}+\frac{(-1)^n}{4}+\frac{\mu }{3}+\frac{13}{12},\frac{n}{6}-\frac{(-1)^{n}}{4}+\frac{\rho}{3}+\frac{11}{12}}
   {\frac{4}{3},\frac{5}{3}}
   {x^3}. 
\end{array}
$$ 
which has only one polynomial solution for each $n\geqslant 0$. Similar to the precedent cases, for each positive integer $n$, there is only one (monic) polynomial solution and it differs depending on whether $n$ equals $0,1$ or $2 \bmod {3}$. To be precise, we have 
\begin{eqnarray*}
	P_{3n}(x;\mu,\rho)&\coloneqq& P_n^{[0]}(x^3;\mu,\rho) \\
	&=& \frac{(-1)^n \left(\frac{1}{3}\right)_n \left(\frac{2}{3}\right)_n}{\left(\frac{n}{2}+\frac{1}{4} (-1)^{3
   n}+\frac{\mu }{3}+\frac{5}{12}\right){}_n \left(\frac{n}{2}-\frac{1}{4} (-1)^{3 n}+\frac{\rho
   }{3}+\frac{1}{4}\right){}_n}\\
   &&\cdot \ 	\pFq{3}{2}{-n,\frac{n}{2}+\frac{1}{4} (-1)^{3 n}+\frac{\mu
   }{3}+\frac{5}{12},\frac{n}{2}-\frac{1}{4} (-1)^{3 n}+\frac{\rho
   }{3}+\frac{1}{4}}{\frac{1}{3},\frac{2}{3}}{x^3} , \\
	 P_{3n+1}(x;\mu,\rho)&\coloneqq& x P_n^{[1]}(x^3;\mu,\rho) \\
	 &=&  x \ \frac{(-1)^n \left(\frac{2}{3}\right)_n \left(\frac{4}{3}\right)_n \,}
	 {\left(\frac{n}{2}-\frac{1}{4} (-1)^{3
   n}+\frac{\mu }{3}+\frac{11}{12}\right){}_n \left(\frac{n}{2}+\frac{1}{4} (-1)^{3 n}+\frac{\rho
   }{3}+\frac{3}{4}\right){}_n} \\
   &&  \cdot \ 		\pFq{3}{2}{-n,\frac{n}{2}-\frac{1}{4} (-1)^{3 n}+\frac{\mu
   }{3}+\frac{11}{12},\frac{n}{2}+\frac{1}{4} (-1)^{3 n}+\frac{\rho
   }{3}+\frac{3}{4}}{\frac{2}{3},\frac{4}{3}}{x^3} ,
   	 \\
	P_{3n+2}(x;\mu,\rho)&\coloneqq&x^2 P_n^{[2]}(x^3;\mu,\rho) \\
	&=& x^2\  \frac{(-1)^n \left(\frac{4}{3}\right)_n \left(\frac{5}{3}\right)_n \,}
	{\left(\frac{n}{2}+\frac{1}{4} (-1)^{3
   n}+\frac{\mu }{3}+\frac{17}{12}\right){}_n \left(\frac{n}{2}-\frac{1}{4} (-1)^{3 n}+\frac{\rho
   }{3}+\frac{5}{4}\right){}_n}\\
  	&&  \cdot \ 		\pFq{3}{2}{-n,\frac{n}{2}+\frac{1}{4} (-1)^{3 n}+\frac{\mu
   }{3}+\frac{17}{12},\frac{n}{2}-\frac{1}{4} (-1)^{3 n}+\frac{\rho
   }{3}+\frac{5}{4}}{\frac{4}{3},\frac{5}{3}}{x^3} .
  \end{eqnarray*}
  
  Here, the sequences $\{P_n^{[k]}(\cdot;\mu,\rho)\}_{n\geqslant 0}$, with $k\in\{0,1,2\}$, are precisely the $2$-\OPS s in the cubic decomposition of $\{P_n(\cdot;\mu,\rho)\}_{n\geqslant 0}$. From Lemma \ref{lem: 2sym components rec rel}, these three $2$-\OPS s are not threefold symmetric and satisfy the recurrence relation \eqref{CD rec rel}. 
These coefficients have been computed in  \cite[Tableau 1, 5 and 9 - Case A]{DM92}, for a different choice of the "free" parameter $\gamma_1$. We have included them in the Appendix  
for completeness, where we have used the software {\it Mathematica}. 

\subsubsection{The sequence of derivatives}

The $2$-\OPS\ $\{Q_n(x;\mu,\rho)\coloneqq\frac{1}{n+1} P_{n+1}'(x;\mu,\rho)\}_{n\geqslant 0}$ satisfies the relation \eqref{Qn rec rel sym} with 
\begin{equation}\label{sym gammaQ C rep}
\begin{array}{l}
	\ds \widetilde{\gamma}_{2n } \coloneqq \widetilde{\gamma}_{2n }(\mu,\rho)
				= \frac{2 n (2 n+1) (n+\rho +1)}{(\mu +3 n+2) (3 n+\rho ) (3 n+\rho +3)}
				 , \qquad n\geqslant 1, \vspace{0.3cm}\\
	\ds \widetilde{\gamma}_{2n +1} \coloneqq \widetilde{\gamma}_{2n +1} (\mu,\rho)
				= \frac{2 (n+1) (2 n+1) (\mu +n+2)}{(\mu +3 n+2) (\mu +3 n+5) (3 n+\rho +3)}
				, \qquad n\geqslant 0,   \\
\end{array}
\end{equation}
whose expressions are derived from \eqref{sym gammas} and \eqref{sym gamma C rep}. 
A straightforward comparison between \eqref{sym gamma C rep} and \eqref{sym gammaQ C rep} readily shows that 
$$
	 \widetilde{\gamma}_{n+1} (\mu,\rho) = \gamma_{n+1} (\rho+1,\mu+2), \qquad n\geqslant 0. 
$$
This, combined with the uniqueness of a polynomial sequence defined by the recurrence relation  \eqref{Qn rec rel sym} implies 
$
	Q_n(x;\mu,\rho) = P_n(x;\rho+1,\mu+2) 
$
which means  
\begin{equation}\label{C: D of Pn}
	\frac{1}{n+1} P_{n+1}'(x;\mu,\rho)=P_n(x;\rho+1,\mu+2)
	, \qquad n\geqslant 0.
\end{equation}

\subsubsection{Particular cases} 

The so called {\it Humbert polynomials} introduced in \cite{Hum} correspond to $\{P_n(x;\frac{3\nu-1}{2},\frac{3\nu}{2})\}_{n\geqslant 0}$, up to a scaling of the variable. In fact, by setting $\mu=\frac{3\nu-1}{2}$ and $\rho=\frac{3\nu}{2}$, this $2$-\OPS\ 
satisfies
$$\begin{multlined}
	P_{n+2}(x;\tfrac{3\nu-1}{2},\tfrac{3\nu}{2}) \\
	= x P_{n+1}(x;\tfrac{3\nu-1}{2},\tfrac{3\nu}{2})  - \frac{4}{27} \frac{n (n+1) (3 \nu +n-1)}{(\nu +n-1) (\nu +n) (\nu +n+1)}P_{n-1}(x;\tfrac{3\nu-1}{2},\tfrac{3\nu}{2}) ,
	\end{multlined}
$$
with initial conditions $P_0=1, \ P_{1}(x)=x$ and $P_2(x)=x^2$. Baker \cite[p.60]{Baker} gave explicit expressions for this particular case via ${}_3F_2$ hypergeometric series and these coincide with those obtained above. {\it Pincherle polynomials}, introduced in \cite{Pin}, are a particular case obtained with $\nu=1/2$, which received special attention. For instance, they were discussed in \cite{Mar92} and several properties were further analyzed in \cite{LO} with the focus of the study on the algebraic properties, including generating functions, as well as the analysis of the components arising in the cubic decomposition, but integral representations for the orthogonality measures were not included. 
Prior to this work, Douak and Maroni \cite{DM97 I}  analyzed in detail the case where $\nu=1$ (and therefore $\mu=1$ and $\nu=3/2$), which gives a threefold symmetric sequence with constant $\gamma$-coefficients, namely 
$$
	P_{n+2}(x;1,\tfrac{3}{2}) = x P_{n+1}(x;1,\tfrac{3}{2})   - \tfrac{4}{27} P_{n-1}(x;1,\tfrac{3}{2}) ,
$$
with the same initial conditions. In other words, this means that the associated sequence coincides with the original one, and, regarding the nature of the problem, they referred to this polynomial sequence as {\it Chebyshev type polynomials}. Therein, an integral representation for the orthogonality functionals was given, with support on an interval on the real line (which obviously does not contain all the zeros of the polynomial sequence  \cite[Theorem 4.1]{DM97 I}), which is, from our point of view, a bit artificial. Following Proposition \ref{prop: C rep}, the polynomial sequence $\{P_{n}(x;1,\tfrac{3}{2})\}_{n\geqslant0}$ is $2$-orthogonal with respect to $(u_0,u_1)$ which admit the integral representation  \eqref{int rep for uk classical} with $b=1$ and  
\begin{align*}
&	\mathcal{U}_0(x;1, \tfrac{3}{2}) 
	=  \frac{9\sqrt{3}}{4\pi} \Bigg( \left( 1+\sqrt{1-x^3}\right)^{1/3}- \left( 1- \sqrt{1-x^3}\right)^{1/3} \Bigg), 
	\\
& 	\mathcal{U}_1(x; 1, \tfrac{3}{2}) 
		  =  \frac{27 \sqrt{3}}{8 \pi } \left(\left(\sqrt{1-x^3}+1\right)^{2/3}-\left(1-\sqrt{1-x^3}\right)^{2/3}\right),
\end{align*}
after using \cite[Eq. (15.4.9)]{DLMF}.

Another particular case that made its appearance in the literature corresponds to the case where $\mu=-1/2$ and $\rho=0$, and therefore $\gamma_{n+2}=\widetilde{\gamma}_n=4/27$ and $\gamma_1=4/9$.  In fact, $\{P_n(x;\frac{-1}{2},0)\}_{n\geqslant 0}$  is a particular case of {\it Faber polynomials}, named after the author of \cite{Faber}, and this polynomial sequence with almost constant coefficients has been studied in \cite{DM97 II} and \cite{HeSaff}. The latter paper was essentially devoted to the study of the zeros, while the former was mainly dedicated to the algebraic properties as well as a representation of the measure on an interval on the real line. Here, we combine the two approaches in a more general setting. So,  $\{P_n(x;\frac{-1}{2},0)\}_{n\geqslant 0}$  
is $2$-orthogonal with respect to $(u_0,u_1)$ which admit the integral representation  \eqref{int rep for uk classical} with $b=1$ and  
\begin{align*}
&	\mathcal{U}_0(x;-\tfrac{1}{2},0) 
	=\frac{3 \sqrt{3} \left( \left(1-\sqrt{1-x^3}\right)^{1/3}+\left(1+\sqrt{1-x^3}\right)^{1/3}\right)}{4 \pi  \sqrt{1-x^3}} , 
	\\
& 	\mathcal{U}_1(x; -\tfrac{1}{2},0) 
		  = \frac{9 \sqrt{3} \left(\left(1-\sqrt{1-x^3}\right)^{2/3}+\left(1+\sqrt{1-x^3}\right)^{2/3}\right)}{8 \pi  \sqrt{1-x^3}}. 
\end{align*}

The penultimate and the latter particular cases are related to each other: if we recall \eqref{C: D of Pn} we readily see that 
$$ 
	\frac{1}{n+1} P_{n+1}'(x;-\tfrac{1}{2},0) = P_{n}(x;1,\tfrac{3}{2}),\qquad n\geqslant 0. 
$$ 
It turns out that, the sequence $\{R_n(x;-\frac{1}{2},0)\}_{n\geqslant 0}$ of the associated polynomials of  $\{P_n(x;-\frac{1}{2},0)\}_{n\geqslant 0}$, defined by 
$$ 	
	R_n(x;-\tfrac{1}{2},0) \coloneqq \left\langle u_0, \frac{P_n(x;-\tfrac{1}{2},0)-P_n(t;-\tfrac{1}{2},0)}{x-t} \right\rangle 
$$ 
actually coincides with $\{P_n(x;1,\tfrac{3}{2})\}_{n\geqslant 0}$, as discussed in  \cite{DM97 II}.

\section*{Acknowledgments}

We thank the anonymous referees for their careful reading and for the suggestions and corrections, which led to an improvement of the paper.  The research of WVA was supported by FWO research grant G.0864.16N and EOS project PRIMA 30889451. 



\newpage

\section*{Appendix}

As discussed in Lemma \ref{lem: 2sym components rec rel}, the polynomial sequences  $\{P_n^{[j]}(x)\}_{n\geqslant 0}$, with $j=0,1,2$, arising from the cubic decomposition of threefold-symmetric $2$-Hahn classical polynomials $P_n(x)$ are $2$-orthogonal polynomials. The expressions for the corresponding recurrence coefficients can be obtained from the expressions of the $\gamma$-coefficients. So, in the light of Lemma \ref{lem: 2sym components rec rel}, we computed the expressions of the recurrence coefficients for each of the cubic components of the polynomial sequences discussed in Cases $\text{B}_1$ and C, and this is detailed below. We have included them here for a matter of completion, as these polynomial sequences $\{P_n^{[j]}(x)\}_{n\geqslant 0}$ were already described in \cite{DM92} up to a linear change of variable. 

{\small  

\bigskip 

\noindent{\bf Case $\text{B}_1$}\label{Coeffs B1}\\

\noindent\fbox{\parbox{\textwidth}{\begin{align*}
& \beta_{2n}^{[0]} (\mu) = \dfrac{2 (\mu +3 n (3 \mu +9 n (2 \mu +14 n+3)-2)-1)}{3 (\mu +9 n-1) (\mu +9 n+2)} \\
& 	\beta_{2n+1}^{[0]} (\mu) =  \frac{2 (19 \mu +3 n (21 \mu +9 n (2 \mu +10 n+17)+77)+32)}{3 (\mu +9 n+2) (\mu +9 n+8)} \\
&		\alpha_{2n}^{[0]} (\mu) = \frac{4 n (3 n-1) (6 n-1) \left(-3 \mu ^2-5 \mu +702 n^3+27 (8 \mu -9) n^2+3 (6 (\mu -3) \mu -17)
   n+8\right)}{3 (\mu +9 n-4) (\mu +9 n-1)^2 (\mu +9 n+2)} \\
& 	\alpha_{2n+1}^{[0]} (\mu) =  \frac{4 (2 n+1) (3 n+1) (6 n+1) \left(2 \mu +3 \left(\mu ^2+117 n^3+9 (4 \mu +9) n^2+(3 \mu  (\mu
   +6)+5) n\right)-5\right)}{3 (\mu +9 n-1) (\mu +9 n+2)^2 (\mu +9 n+5)} \\
&		\gamma_{2n}^{[0]} (\mu) =  \frac{8 n (2 n+1) (3 n-1) (3 n+1) (6 n-1) (6 n+1) (\mu +3 n-1) (\mu +3 n) (\mu +3 n+1)}{3 (\mu +9
   n-4) (\mu +9 n-1)^2 (\mu +9 n+2)^2 (\mu +9 n+5)} \\
& 	\gamma_{2n+1}^{[0]} (\mu) = \frac{8 (n+1) (2 n+1) (3 n+1) (3 n+2) (6 n+1) (6 n+5)}{3 (\mu +9 n+2) (\mu +9 n+5) (\mu +9 n+8)}
\end{align*}}}


\noindent\fbox{\parbox{\textwidth}{\begin{align*}
& 	\beta_{2n}^{[1]} (\mu) = \dfrac{8 (\mu -1)+6 n (9 \mu +9 n (2 \mu +10 n+7)+5)}{3 (\mu +9 n-1) (\mu +9 n+5)} \\
& 	\beta_{2n+1}^{[1]} (\mu) =  \frac{2 (31 \mu +3 n (27 \mu +9 n (2 \mu +14 n+31)+202)+143)}{3 (\mu +9 n+5) (\mu +9 n+8)} \\
&	\alpha_{2n}^{[1]} (\mu) =  \frac{4 n \left(36 n^2-1\right) \left(-4 \mu +3 n \left(3 (\mu -2) \mu +117 n^2+36 (\mu -1)
   n-10\right)+4\right)}{3 (\mu +9 n-4) (\mu +9 n-1)^2 (\mu +9 n+2)} \\
&	\begin{multlined}
\alpha_{2n+1}^{[1]} (\mu) = \frac{4 (2 n+1) (3 n+1) (3 n+2)}{3 (\mu +9 n+2) (\mu +9 n+5)^2 (\mu +9 n+8)} \\
 \left((\mu +2) (9 \mu +37)+702 n^3+27 (8 \mu +43) n^2+3 (6 \mu 
   (\mu +13)+187) n\right)
   \end{multlined}
\\[0.2cm]
&	\gamma_{2n}^{[1]} (\mu) =   \frac{8 n (2 n+1) (3 n+1) (3 n+2) (6 n-1) (6 n+1)}{3 (\mu +9 n-1) (\mu +9 n+2) (\mu +9 n+5)} \\
& 	\gamma_{2n+1}^{[1]} (\mu) = \frac{8 (n+1) (2 n+1) (3 n+1) (3 n+2) (6 n+5) (6 n+7) (\mu +3 n+1) (\mu +3 n+2) (\mu +3 n+3)}{3
   (\mu +9 n+2) (\mu +9 n+5)^2 (\mu +9 n+8)^2 (\mu +9 n+11)} 
\end{align*}}}

\noindent\fbox{\parbox{\textwidth}{\begin{align*}
&	\beta_{2n}^{[2]} (\mu) = \dfrac{20 (\mu +2)+6 n (15 \mu +9 n (2 \mu +14 n+17)+58)}{3 (\mu +9 n+2) (\mu +9 n+5)} 
\\
&	\beta_{2n+1}^{[2]} (\mu) = \frac{2 (46 \mu +3 n (33 \mu +9 n (2 \mu +10 n+27)+209)+170)}{3 (\mu +9 n+5) (\mu +9 n+11)}
\\
&	\alpha_{2n}^{[2]} (\mu) =  \frac{4 n (3 n+1) (6 n+1) \left(\mu +3 \left(\mu ^2+234 n^3+9 (8 \mu +17) n^2+(6 \mu  (\mu +5)+7)
   n\right)-10\right)}{3 (\mu +9 n-1) (\mu +9 n+2)^2 (\mu +9 n+5)} 
\\
&	\begin{multlined}\alpha_{2n+1}^{[2]} (\mu) =  \frac{4 (2 n+1) (3 n+2) (6 n+5) }{3 (\mu +9 n+2) (\mu +9 n+5)^2 (\mu +9 n+8)} 
	\\
	\left(2 (\mu +2) (3 \mu +10)+351 n^3+54 (2 \mu +11) n^2+3 (3 \mu 
   (\mu +14)+98) n\right)
   \end{multlined}
\\[0.3cm]
&	\gamma_{2n}^{[2]} (\mu) = \frac{8 n (2 n+1) (3 n+1) (3 n+2) (6 n+1) (6 n+5) (\mu +3 n) (\mu +3 n+1) (\mu +3 n+2)}{3 (\mu +9
   n-1) (\mu +9 n+2)^2 (\mu +9 n+5)^2 (\mu +9 n+8)}
\\
&	\gamma_{2n+1}^{[2]} (\mu) =  \frac{8 (n+1) (2 n+1) (3 n+2) (3 n+4) (6 n+5) (6 n+7)}{3 (\mu +9 n+5) (\mu +9 n+8) (\mu +9 n+11)}
\end{align*}}}

\bigskip

\noindent{\bf Case C}\label{app: Case C}\\

\noindent\fbox{\parbox{\textwidth}{\begin{align*}
& \beta _{2n}^{[0]}(\mu,\rho)= \frac{2 (2 n+1) (3 n+1) (6 n+1)}{(9 n+\mu +2) (9
   n+\rho +3)}-\frac{4 n (3 n-1) (6 n-1)}{(9 n+\mu -1) (9 n+\rho -3)} \\
&  \beta _{2n+1}^{[0]}(\mu,\rho)= \frac{4 (n+1) (3 n+2) (6 n+5)}{(9 n+\mu +8) (9
   n+\rho +6)}-\frac{2 (2 n+1) (3 n+1) (6 n+1)}{(9 n+\mu +2) (9 n+\rho +3)} \\
& \begin{multlined}
\alpha _{2n}^{[0]}(\mu,\rho)=  \frac{6 n (6 n-2) (6 n-1) }{(9 n+\mu -4) (9 n+\mu -1)^2 (9 n+\rho -3)^2 (9 n+\rho )} 
 \Bigg((6 n-1) (3 n+\mu -1)
   (3 n+\rho -1) \\ 
   	+\frac{(6 n-3) (9 n+\mu -1) (3 n+\rho -2) (3 n+\rho -1)}{9 n+\rho -6}
	+\frac{(6   n+1) (3 n+\mu -1) (3 n+\mu ) (9 n+\rho -3)}
	{9 n+\mu +2}\Bigg) 
\end{multlined} \\
& \begin{multlined}
 \alpha _{2n+1}^{[0]}(\mu,\rho)
 	= \frac{6 (2 n+1) (6 n+1) }{(9 n+\mu
   +2)^2 (9 n+\mu +5) (9 n+\rho ) (9 n+\rho +3)^2}
   \Bigg(2 (3 n+\mu +1) (3
   n+\rho ) (3 n+1)^2 \\
   +\frac{6 n (3 n+\mu ) (3 n+\mu +1) (9 n+\rho +3) (3 n+1)}{9 n+\mu
   -1}
   +\frac{(3 n+2) (6 n+2) (9 n+\mu +2) (3 n+\rho ) (3 n+\rho +1)}{9 n+\rho +6}\Bigg) 
\end{multlined}\\
&  \gamma _{2n}^{[0]}(\mu,\rho)= \frac{6 n (6 n-2) (6 n-1) (6 n+1) (6 n+2) (6 n+3)
   (\mu +3 n-1) (\mu +3 n) (\mu +3 n+1)}{(\mu +9 n-4) (\mu +9 n-1)^2 (\mu +9 n+2)^2 (\mu +9 n+5)
   (9 n+\rho -3) (9 n+\rho ) (9 n+\rho +3)} \\
& \gamma _{2n+1}^{[0]}(\mu,\rho)= \frac{6 n (6 n-2) (6 n-1) (6 n+1) (6 n+2) (6 n+3)
   (\mu +3 n-1) (\mu +3 n) (\mu +3 n+1)}{(\mu +9 n-4) (\mu +9 n-1)^2 (\mu +9 n+2)^2 (\mu +9 n+5)
   (9 n+\rho -3) (9 n+\rho ) (9 n+\rho +3)} 
\end{align*}}}

\noindent\fbox{\parbox{\textwidth}{\begin{align*}
& \beta _{2n}^{[1]}(\mu,\rho)= \frac{4 (2 n+1) (3 n+1) (3 n+2)}{(9 n+\mu +5) (9
   n+\rho +3)}+\frac{2 n-72 n^3}{(9 n+\mu -1) (9 n+\rho )}  \\
&  \beta _{2n+1}^{[1]}(\mu,\rho)= \frac{2 (n+1) (6 n+5) (6 n+7)}{(9 n+\mu +8) (9
   n+\rho +9)}-\frac{4 (2 n+1) (3 n+1) (3 n+2)}{(9 n+\mu +5) (9 n+\rho +3)} \\
&  \begin{multlined}
 \alpha _{2n}^{[1]}(\mu,\rho)= \frac{6 n (6 n-1) (6 n+1)}{(9 n+\mu -1)^2 (9 n+\mu +2) (9 n+\rho -3)
   (9 n+\rho )^2}
   \Bigg(6 n (3 n+\mu ) (3
   n+\rho -1)
   \\ 
   +\frac{(6 n+2) (9 n+\mu -1) (3 n+\rho ) (3 n+\rho -1)}{9 n+\rho +3}+\frac{(6 n-2) (3
   n+\mu -1) (3 n+\mu ) (9 n+\rho )}{9 n+\mu -4}\Bigg) \\
\end{multlined} \\
& \begin{multlined}
 \alpha _{2n+1}^{[1] }(\mu,\rho)
 	=\frac{6 (2 n+1) }{(9 n+\mu +2) (9 n+\mu +5)^2 (9 n+\rho +3)^2 (9 n+\rho +6)}\\
	\times
	 \Bigg(6 (2 n+1) (3 n+1) (3 n+2) (3 n+\mu +1) (3 n+\rho +1)
   \\ 
   +\frac{(3 n+2) (6 n+1) (6 n+2) (9 n+\mu +5) (3 n+\rho ) (3 n+\rho +1)}{9
   n+\rho } \\
   +\frac{(3 n+1) (6 n+4) (6 n+5) (3 n+\mu +1) (3 n+\mu +2) (9 n+\rho +3)}{9 n+\mu
   +8} \Bigg)
\end{multlined} \\
& \gamma _{2n}^{[1]}(\mu,\rho)= \frac{6 n (6 n-1) (6 n+1) (6 n+2) (6 n+3) (6 n+4) (3
   n+\rho -1) (3 n+\rho ) (3 n+\rho +1)}{(\mu +9 n-1) (\mu +9 n+2) (\mu +9 n+5) (9 n+\rho -3) (9
   n+\rho )^2 (9 n+\rho +3)^2 (9 n+\rho +6)} \\
 &  \gamma _{2n+1}^{[1]}(\mu,\rho)= \frac{6 n (6 n-1) (6 n+1) (6 n+2) (6 n+3) (6 n+4)
   (3 n+\rho -1) (3 n+\rho ) (3 n+\rho +1)}{(\mu +9 n-1) (\mu +9 n+2) (\mu +9 n+5) (9 n+\rho -3)
   (9 n+\rho )^2 (9 n+\rho +3)^2 (9 n+\rho +6)} 
\end{align*}}}

\noindent\fbox{\parbox{\textwidth}{\begin{align*}
&   \beta _{2n}^{[2]}(\mu,\rho)= \frac{2 (2 n+1) (3 n+2) (6 n+5)}{(9 n+\mu +5) (9
   n+\rho +6)}-\frac{4 n (3 n+1) (6 n+1)}{(9 n+\mu +2) (9 n+\rho )} \\
&  \beta _{2n+1}^{[2]}(\mu,\rho)= \frac{4 (n+1) (3 n+4) (6 n+7)}{(9 n+\mu +11) (9
   n+\rho +9)}-\frac{2 (2 n+1) (3 n+2) (6 n+5)}{(9 n+\mu +5) (9 n+\rho +6)} \\
&  \begin{multlined}
 \alpha _{2n}^{[2]}(\mu,\rho)= \frac{6 n (6 n+1) (6 n+2)}{(9 n+\mu -1) (9 n+\mu +2)^2 (9 n+\rho )^2
   (9 n+\rho +3)} \\
   \Bigg((6 n+1) (3 n+\mu )
   (3 n+\rho )\\
   +\frac{(6 n-1) (9 n+\mu +2) (3 n+\rho -1) (3 n+\rho )}{9 n+\rho -3}
   \\
   +\frac{(6 n+3) (3
   n+\mu ) (3 n+\mu +1) (9 n+\rho )}{9 n+\mu +5} \Bigg)
\end{multlined} \\
& \begin{multlined}
 \alpha _{2n+1}^{[2]}(\mu,\rho)= \frac{6 (2 n+1) (6 n+5)}{(9 n+\mu
   +5)^2 (9 n+\mu +8) (9 n+\rho +3) (9 n+\rho +6)^2}\\
    \Bigg(2 (3 n+\mu +2) (3
   n+\rho +1) (3 n+2)^2
   \\
   +\frac{6 (n+1) (9 n+\mu +5) (3 n+\rho +1) (3 n+\rho +2) (3 n+2)}{9 n+\rho
   +9} \\
   +\frac{(3 n+1) (6 n+4) (3 n+\mu +1) (3 n+\mu +2) (9 n+\rho +6)}{9 n+\mu +2} \Bigg)
\end{multlined} \\
&  \gamma _{2n}^{[2]}(\mu,\rho)= \frac{6 n (6 n+1) (6 n+2) (6 n+3) (6 n+4) (6 n+5)
   (\mu +3 n) (\mu +3 n+1) (\mu +3 n+2)}{(\mu +9 n-1) (\mu +9 n+2)^2 (\mu +9 n+5)^2 (\mu +9 n+8)
   (9 n+\rho ) (9 n+\rho +3) (9 n+\rho +6)} \\
& \gamma _{2n+1}^{[2]}(\mu,\rho)=\frac{6 n (6 n+1) (6 n+2) (6 n+3) (6 n+4) (6 n+5)
   (\mu +3 n) (\mu +3 n+1) (\mu +3 n+2)}{(\mu +9 n-1) (\mu +9 n+2)^2 (\mu +9 n+5)^2 (\mu +9 n+8)
   (9 n+\rho ) (9 n+\rho +3) (9 n+\rho +6)} 
\end{align*}}}

}

\end{document}